\newcommand{\Ric}{\text{Ric}}
\newcommand{\Vol}{\text{Vol}}
\newcommand{\diam}{\text{diam}}
\newcommand{\dN}{\mathds{N}}
\newcommand{\dQ}{\mathds{Q}}
\newcommand{\dR}{\mathds{R}}
\newcommand{\dZ}{\mathds{Z}}
\newcommand{\cA}{\mathcal{A}}
\newcommand{\cB}{\mathcal{B}}
\newcommand{\cH}{\mathcal{H}}
\newcommand{\cK}{\mathcal{K}}
\newcommand{\cM}{\mathcal{M}}
\newcommand{\cV}{\mathcal{V}}
\newtheorem{theorem}{Theorem}[section]
\newtheorem{proposition}[theorem]{Proposition}
\newtheorem{lemma}[theorem]{Lemma}
\newtheorem{corollary}[theorem]{Corollary}
\theoremstyle{definition}
\newtheorem{definition}[theorem]{Definition}
\theoremstyle{remark}
\newtheorem{remark}{Remark}[section]
\theoremstyle{remark}
\newtheorem{example}{Example}[section]
\theoremstyle{remark}
\theoremstyle{remark}
\newtheorem{question}{Question}[section]
\theoremstyle{remark}
\begin{document}

\title{Fundamental Groups and the Milnor Conjecture}

\author{Elia Bru\`e, Aaron Naber and Daniele Semola}

\date{\today}
\maketitle

\begin{abstract}
It was conjectured by Milnor in 1968 that the fundamental group of a complete manifold with nonnegative Ricci curvature is finitely generated.  The main result of this paper is a counterexample, which provides an example $M^7$ with $\Ric\geq 0$ such that $\pi_1(M)=\dQ/\dZ$ is infinitely generated.

There are several new points behind the result.  The first is a new topological construction for building manifolds with infinitely generated fundamental groups, which can be interpreted as a smooth version of the fractal snowflake.  The ability to build such a fractal structure will rely on a very twisted gluing mechanism.  Thus the other new point is a careful analysis of the mapping class group $\pi_0\text{Diff}(S^3\times S^3)$ and its relationship to Ricci curvature.  In particular, a key point will be to show that the action of $\pi_0\text{Diff}(S^3\times S^3)$ on the standard metric $g_{S^3\times S^3}$ lives in a path connected component of the space of metrics with $\Ric>0$. 
\end{abstract}

\tableofcontents

\section{Introduction}\label{s:Intro}

The study of the structure of the fundamental group $\pi_1(M)$ of a manifold with lower Ricci curvature bounds has received a good deal of attention, and at this point its structural properties are very well understood.  Before discussing the results of this paper let us focus for a moment on some background about what is known.\\

One of the earliest results in the analysis of spaces with lower Ricci curvature bounds is by John Milnor \cite{Milnor}.  Milnor used an early version of volume comparison by Bishop \cite{Bishop} in order to show that if $M^n$ has nonnegative Ricci curvature, then any finitely generated subgroup of the fundamental group $\pi_1(M)$ has polynomial growth.  These results led Milnor to conjecture that the fundamental group need automatically be finitely generated.\\

The importance of the polynomial growth condition to the inherent structure of a group became clear when Gromov \cite{Gromovgroups} proved that any finitely generated polynomial growth group must be almost nilpotent, that is, must have a nilpotent subgroup of finite index.  Combining this with Milnor's result we see that any finitely generated subgroup of $\pi_1(M)$, when $M$ has nonnegative Ricci curvature, is itself almost nilpotent.  Wilking \cite{Wilking} gives a form of converse to this statement, where by building on the work of Wei \cite{Wei} he can show that for any finitely generated almost nilpotent group there exists a manifold with nonnegative Ricci curvature which has this group as its fundamental group.\\

In the context of lower sectional curvature one could do more.  Gromov proved in \cite{Gromovalmostflat} that the local fundamental group \footnote{The image $\pi_1(B_{\epsilon(n)}(p))\to \pi_1(B_{1}(p))$.} is always generated by a uniformly finite number of generators.  This gave the first real hints toward finite generation.  The next major breakthrough on relating the structure of the fundamental group with geometry came from Fukaya and Yamaguchi \cite{FukayaYamaguchi}.  They proved that on a space with lower sectional curvature bounds the local fundamental group is almost nilpotent.  This influential work gave the first real structure theory for the fundamental group.  A subtle point in their work is that the index of the nilpotent subgroup of the local fundamental group was not uniformly controlled.  This point was resolved in the work of Kapovitch, Petrunin and Tuschmann \cite{KapovitchPetruninTuschmann}.  Fukaya and Yamaguchi went on to conjecture in \cite{FukayaYamaguchi} that in the nonnegative sectional context a manifold should have almost abelian fundamental group with the index of the abelian subgroup dimensionally bounded.  An interesting example of Wei \cite{Wei} shows this conjecture cannot hold for manifolds with nonnegative Ricci curvature, though the conjecture remains open for spaces with nonnegative sectional curvature. \\

The results and techniques of Fukaya and Yamaguchi were extended to the context of lower Ricci bounds by Kapovitch and Wilking \cite{KapovitchWilking}.  Among the important applications of this extension was to understand that for a manifold with nonnegative Ricci curvature, a finitely generated subgroup of the fundamental group has a dimensionally bounded number of generators.  A result by Colding and Naber \cite{ColdingNaberholder} proves that the isometry group of a limit of spaces with lower Ricci curvature bounds is a Lie group, and combining this with their structure, Kapovitch and Wilking \cite{KapovitchWilking} are able to give a fairly comprehensive understanding of the fundamental group in the compact case.  In \cite{Wilking} Wilking was able to show how a counterexample to the Milnor conjecture must arise from an abelian action. \\

In low dimensions the Milnor conjecture has been resolved.  At its heart this is because one can prove much stronger rigidities in these contexts, and control much more than just the fundamental group.  In dimension two Cohn-Vossen \cite{Cohn-Vossen} proved that if $M^2$ satisfies $\Ric\geq 0$ and is noncompact, then $M$ is flat or diffeomorphic to $\dR^2$.  In particular, that $M^2$ has finitely generated fundamental group is an easy consequence.  In dimension three the first major result was by Schoen-Yau \cite{SchoenYau}, where they proved that if $\Ric>0$ for a noncompact $M^3$, then it is diffeomorphic to $\dR^3$.  Their proof was unique in comparison to the techniques used in other papers being cited, and relied heavily on minimal surface theory.  Their program was expanded on by Liu \cite{Liu}, who was able to prove that if $M^3$ satisfies $\Ric\geq 0$ then $M^3$ is either diffeomorphic to $\dR^3$ or its universal cover isometrically splits.  The Milnor conjecture is again an easy consequence in this context.  Recently Pan \cite{Pan3d} has given a distinct proof in the three dimensional case.\\

In addition to the broad points of progress mentioned above, let us also mention some of the more specific lines of attack which have had success over the years.  The most rigid result is in the completely noncollapsed case, that is when $\Vol(B_r(p))\geq v r^n$ for all large $r$.  In this case Li \cite{Li} showed that the fundamental group is uniformly finite.  Anderson \cite{Anderson} generalized this to show that if $b_1(M)\geq k$ and $\Vol(B_r(p))\geq v r^{n-k}$, then again $M^n$ has finitely generated fundamental group.  On the opposite end of rigidity, Sormani \cite{Sormanilinear} studied manifolds with minimal growth.  In particular, if a space satisfies small diameter growth $\diam \,\partial B_r\leq \epsilon(n) r$ for all large $r$, then she showed that the fundamental group of $M^n$ is finitely generated.  More recently, Pan \cite{Pancone} has extended these techniques in order to show that if the universal cover of $M^n$ has a unique metric tangent cone at infinity, then the Milnor conjecture holds and $M^n$ has finitely generated fundamental group.  See also \cite{Sormaniloop,SormaniWei1,SormaniWei2,Wu,Pan3,PanWei,Wang}, for many other interesting directions and related results.  \\

\vspace{.5cm}

\subsection{Main Results on Fundamental Groups}

The results of Gromov \cite{Gromovalmostflat}, Fukaya-Yamaguchi \cite{FukayaYamaguchi}, Kapovitch-Wilking \cite{KapovitchWilking} and Wilking \cite{Wilking} thus tell us that the fundamental group $\pi_1(M)$ of a manifold with nonnegative Ricci curvature is well understood, and locally it is uniformly finitely generated.  In particular, even if $\pi_1(M)$ were infinitely generated then necessarily all finitely generated subgroups are $C(n)$-uniformly finitely generated.  The first main result of this paper is to build such an example, and in particular we can take the fundamental group to be the rationals:\\

\begin{theorem}[Infinitely Generated Fundamental Group]\label{t:main_milnor}
	Let $\Gamma\leq \dQ/\dZ \subseteq S^1$ be any subgroup.  Then there exists a smooth complete manifold $(M^7,g)$ with $\pi_1(M)=\Gamma$ and such that $\Ric\geq 0$.
\end{theorem}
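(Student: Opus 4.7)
The plan is to realize $\Gamma$ as an increasing union of finite cyclic subgroups $\Gamma_1 \subseteq \Gamma_2 \subseteq \cdots$ (every subgroup of $\dQ/\dZ$ is locally cyclic and hence of this form) and to build $M$ as an exhaustion by compact manifolds with boundary $M_1 \subseteq M_2 \subseteq \cdots$ so that $\pi_1(M_k) = \Gamma_k$ and the inclusions $M_k \hookrightarrow M_{k+1}$ induce the prescribed inclusions $\Gamma_k \hookrightarrow \Gamma_{k+1}$. The building blocks are seven-dimensional, with the two factors of $S^3 \times S^3$ playing distinct roles: one factor carries lens-type quotients realizing the cyclic groups $\Gamma_k$, while the second factor provides the extra topological room needed for the twisted gluings that enlarge the quotient group.

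More precisely, I would construct each extension $\overline{M_{k+1} \setminus M_k}$ as a doubly warped product of an interval with an appropriate quotient of $S^3 \times S^3$. Topologically this piece is an $(S^3 \times S^3)$-bundle over an interval whose two boundary components are $(S^3 \times S^3)/\Gamma_k$ and $(S^3 \times S^3)/\Gamma_{k+1}$, with the transition governed by an explicit element $\phi \in \pi_0 \Diff(S^3 \times S^3)$ implementing the enlargement of the lens quotient. Geometrically, the slice metric must vary from the round product metric $g_{S^3 \times S^3}$ at one end to its pullback $\phi^{*} g_{S^3 \times S^3}$ at the other through a path of metrics of positive Ricci curvature; combined with a suitably concave warping function on the interval direction, this will give $\Ric > 0$ in the interior of each cylinder and $\Ric \geq 0$ along the gluing interfaces (once second fundamental forms are matched).

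The main obstacle, flagged already in the abstract, is the curvature-preserving interpolation across the twist $\phi$. The required statement is that the orbit of the round metric $g_{S^3 \times S^3}$ under the natural pullback action of $\pi_0 \Diff(S^3 \times S^3)$ lies in a single path component of the space $\cR_{>0}(S^3 \times S^3)$ of metrics with $\Ric > 0$. Establishing this demands identifying generators of $\pi_0 \Diff(S^3 \times S^3)$, realizing each one by a concrete diffeomorphism, and constructing explicit paths of positive-Ricci metrics joining its pullback to $g_{S^3 \times S^3}$. This purely Riemannian and topological input is the heart of the paper and is essentially independent of the combinatorial fractal construction above.

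Once the building blocks exist with quantitative curvature bounds, the passage to the limit is a matter of scale bookkeeping. By shrinking the diameters of the successive cylinders geometrically fast (the \emph{smooth fractal snowflake}) I can arrange that the noncompact union $M := \bigcup_k M_k$ is metrically complete, that $\Ric \geq 0$ persists globally across the gluings, and that $\pi_1(M) = \varinjlim_k \Gamma_k = \Gamma$ by a telescoping Van Kampen argument on the exhaustion. The admissible rates of shrinking are constrained by the uniform quantitative lower Ricci bounds produced on each slice cylinder by the mapping class group analysis.
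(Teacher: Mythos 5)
Your proposal correctly identifies the group decomposition $\Gamma=\bigcup_j\Gamma_j$ into finite cyclic pieces and the central obstruction — connecting $g_{S^3\times S^3}$ to $\phi^*g_{S^3\times S^3}$ through positive Ricci metrics for mapping-class-nontrivial $\phi$ — and the rough idea of an inductive exhaustion. However, the topological mechanism you propose for enlarging $\pi_1$ cannot work as stated. You describe each transition piece as ``an $(S^3\times S^3)$-bundle over an interval whose two boundary components are $(S^3\times S^3)/\Gamma_k$ and $(S^3\times S^3)/\Gamma_{k+1}$.'' A fiber bundle over an interval is trivial, so its two boundary components are diffeomorphic; but $(S^3\times S^3)/\Gamma_k$ and $(S^3\times S^3)/\Gamma_{k+1}$ have distinct finite fundamental groups and thus are never diffeomorphic when $\Gamma_k\ne\Gamma_{k+1}$. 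Consequently a Van Kampen computation on your exhaustion would produce $\Gamma_k$ again rather than $\Gamma_{k+1}$. The paper's mechanism is different and essentially surgery-theoretic: one works primarily with the universal cover $\tilde M$ and a free isometric $\Gamma$-action, and to pass from stage $j$ to $j+1$ one glues $k_{j+1}$ copies of the (twisted) previous stage into a fresh base manifold $\cB_{j+1}\approx S^3\times\dR^4$ after removing $k_{j+1}$ copies of $S^3\times D^4$. The new generator $\gamma_{j+1}$ appears because the ambient piece $\cB_{j+1}/\Gamma_{j+1}$ is already a free quotient of $S^3\times\dR^4$ with $\pi_1=\Gamma_{j+1}$, and the removed $S^3\times D^4/\Gamma_j$ is what carries the subgroup $\Gamma_j$ along which the previous stage is attached. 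There is no way to reproduce this by a product-with-interval attachment.

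Two further gaps. First, the mapping class group ingredient you isolate (Lemma~\ref{l:main_mapping_class}) is not quite the statement the construction requires. Because the path of cross-sectional metrics implementing the twist must be traversed while an isometric $\Gamma_j$-action is present, one needs the equivariant refinement (Theorem~\ref{t:equivariant_mapping_class_S3xS2}): an $S^1$-invariant family of positive Ricci metrics for which the $(1,k)$-Hopf action remains isometric throughout and is carried to the $(1,0)$-Hopf action at the end. This cannot be extracted from the non-equivariant statement by isotoping generators of $\pi_0\mathrm{Diff}(S^3\times S^3)$, since those isotopies need not commute with the circle action. Second, your completeness argument is backwards: you propose to shrink the successive cylinders geometrically, but that bounds the diameter and produces a metrically incomplete (or singular limit) space, not a complete one. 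In the actual construction the scales $r_j=d(p_j,\gamma_j\cdot p_j)$ increase without bound, the ratios $r_{j+1}/(k_jr_j)\to\infty$, and completeness follows because each $M_j$ embeds isometrically and $\Gamma_j$-equivariantly onto an ever-larger ball in $M_{j+1}$, so one may take the direct limit.
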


We will outline the constructions in Sections \ref{s:outline_milnor_construction} and \ref{s:outline_milnor:inductive} more carefully, however let us begin with a very rough picture of the space and its properties.  There are several topological methods to build spaces with infinitely generated fundamental groups, with the dyadic solenoid complement being a geometrically popular method.  The constructions of this paper are quite distinct.  \\

We will not directly build $M$, instead we will focus on constructing the universal cover $\tilde M$ with the appropriate group action by $\Gamma$.  The overall structure of $\tilde M$, with respect to a basepoint $\tilde p\in \tilde M$, will in many ways mimic that of a fractal snowflake, see Section \ref{s:outline_milnor_construction}.  The ability to build such a fractal structure will rely on a very twisted gluing mechanism.  As we move up in scales we can study the local group $\Gamma_r\equiv \big\langle\gamma: d(\tilde p,\gamma\cdot\tilde p)\leq r\big\rangle\leq \Gamma$, which will jump one generator at a time at scales $r_j$ with $\Gamma_j\equiv\Gamma_{r_j}=\langle\gamma_j,\Gamma_{j-1}\rangle$.  Note that the local group will always be generated by a single action, what jumps is what this generator will be.  At the scales $r_j$ when the local group increases the space will look very close to $S^3\times \dR^4$ with the generating $\gamma_j$ acting by a rotation on both the $S^3$ factor and the $\dR^4$ factor.  \\

A major subtlety of the construction of $\tilde M$ will occur between two of the scales $r_j$ and $r_{j+1}$.  At the bottom scale the generating $\gamma_j$ action will rotate both the $\dR^4$ factor and the $S^3$ factor, while at the top scale the same $\gamma_{j}$ only rotates the $S^3$ factor.  Geometrically the space may look like $S^3\times \dR^4$ at both the $r_j$ and $r_{j+1}$ scales, however one should view these two copies of $S^3\times \dR^4$ quite distinctly.  In particular, the two $3$-spheres in $S^3\times \dR^4 = S^3\times C(S^3)$ will necessarily mix together in order to change the behavior of the action.  We will see this behavior is closely connected to the mapping class group of $S^3\times S^3$.\\ 

As this point is of some independent interest it is worth discussing it briefly, we refer the reader to Section \ref{s:mappingS3S3} for a more in depth discussion.  Let $\cM_0(S^3\times S^3)\equiv \Big\{[g]: g\sim \phi^*g:\phi\in \text{Diff}_0(S^3\times S^3)\Big\}$ represent the space of smooth Riemannian metrics modulo diffeomorphisms which are isotopic to the identity.  From the perspective of gluing and topology a diffeomorphism which is not isotopic to the identity is a highly twisted object, and thus it is good to distinguish between those which are and are not connected to the identity by a continuous path.  We can let $\cM^+_0(S^3\times S^3)\equiv\{[g]\in \cM_0:\Ric>0\}$ be the subset of metrics with positive Ricci curvature.  Note that there is a canonical action of the mapping class group $\pi_0\text{Diff}(S^3\times S^3)$ on these spaces given by $[\phi]\cdot [g]= [\phi^*g]$. One of the main technical lemmas of this paper is that this action of the mapping class group $\pi_0\text{Diff}(S^3\times S^3)$ on the standard metric $g_{S^3\times S^3}$ lives in a connected component of $\cM^+_0(S^3\times S^3)$:\\

\begin{lemma}[Mapping Class Group and Ricci Curvature on $S^3\times S^3$]\label{l:main_mapping_class}
	Let $g_0=g_{S^3\times S^3}$ be the standard metric on $S^3\times S^3$.  Then given $\phi\in {\rm Diff}(S^3\times S^3)$\, there exists a smooth family $g_t$ of metrics  with $\Ric_{g_t}>0$ such that $g_0$ is the standard metric and $g_1=\phi^*g_0$.  That is, the orbit $\pi_0{\rm Diff}(S^3\times S^3)\cdot [g_{S^3\times S^3}]$ of the mapping class group lives in a connected component of $\cM^+_0(S^3\times S^3)$, the space of metrics with strictly positive Ricci curvature.
\end{lemma}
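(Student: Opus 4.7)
The plan is to reduce the statement to a check on a generating set of $\pi_0\text{Diff}(S^3\times S^3)$ and to handle each generator by an explicit Riemannian submersion construction. First observe that the property ``$g_0$ and $\phi^*g_0$ are joined by a path of positive Ricci metrics'' depends only on the isotopy class $[\phi]$, since any isotopy $\phi_t$ yields the desired path $\phi_t^*g_0$. Moreover, the set
\[
H:=\bigl\{[\phi]\in\pi_0\text{Diff}(S^3\times S^3) : g_0\text{ and }\phi^*g_0\text{ are joined by a path of metrics with }\Ric>0\bigr\}
\]
forms a subgroup: given positive Ricci paths $g_t$ from $g_0$ to $\phi^*g_0$ and $h_t$ from $g_0$ to $\psi^*g_0$, concatenating $h_t$ with the pulled-back path $\psi^*g_t$ (still of positive Ricci, since pullback by a diffeomorphism preserves Ricci) yields a path from $g_0$ to $(\phi\circ\psi)^*g_0$; inverses follow by reversal. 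Hence it suffices to exhibit a generating set of $\pi_0\text{Diff}(S^3\times S^3)$ inside $H$.

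As generators I take the swap $\tau(x,y)=(y,x)$ together with the twist diffeomorphisms $\phi_\alpha(x,y)=(x,\alpha(x)y)$ and their $\tau$-conjugates $\tau\phi_\alpha\tau$, where $\alpha\colon S^3\to SU(2)$ ranges over representatives of generators of $\pi_3(\text{Diff}(S^3))\cong\pi_3(SO(4))$. Since $\tau$ is an isometry of $g_0$, we have $[\tau]\in H$ via the constant path. For a twist $\phi_\alpha$, consider the two-parameter family of metrics on $S^3\times S^3=SU(2)\times SU(2)$:
\[
g_{s,t}\bigl((v_1,w_1),(v_2,w_2)\bigr)=g_{S^3}(v_1,v_2)+s^2\bigl\langle t\,\mathrm{Ad}(y^{-1})\xi(v_1)+\eta_1,\ t\,\mathrm{Ad}(y^{-1})\xi(v_2)+\eta_2\bigr\rangle_{\mathfrak{su}(2)},
\]
where $\xi(v)=\alpha(x)^{-1}(d\alpha)_x(v)\in\mathfrak{su}(2)$ is the pullback of the left Maurer--Cartan form of $\alpha$, $\eta_i=(L_{y^{-1}})_*w_i$, and the inner product is the standard $\mathrm{Ad}$-invariant one. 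Bi-invariance of the inner product gives $g_{1,0}=g_0$ and $g_{1,1}=\phi_\alpha^*g_0$ directly.

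The key geometric observation is that for every $s>0$ and $t\in[0,1]$, the first-factor projection $(S^3\times S^3, g_{s,t})\to(S^3, g_{S^3})$ is a Riemannian submersion with totally geodesic fibers isometric to the round $3$-sphere of radius $s$; only the horizontal distribution varies with $t$. I build the required positive Ricci path from $g_0$ to $\phi_\alpha^*g_0$ by concatenating three legs in the $(s,t)$-square: $(1,0)\to(\epsilon,0)\to(\epsilon,1)\to(1,1)$. The first leg is a smooth family of products of round spheres, of positive Ricci for every $s>0$. The third leg is $\phi_\alpha^*$ applied to the same family, still of positive Ricci. On the middle leg, by O'Neill's formula for submersions with totally geodesic fibers the vertical Ricci contribution is of order $2/\epsilon^2$, while the O'Neill tensor satisfies $|A|^2_{g_{\epsilon,t}}=O(\epsilon^2)$ (its underlying horizontal--horizontal bracket is $\epsilon$-independent, while its vertical norm in the shrunken fiber scales as $\epsilon^2$); hence for $\epsilon$ sufficiently small the total Ricci is uniformly positive in $t\in[0,1]$.

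The main obstacle is verifying that the above generating set is complete, i.e., that every class in $\pi_0\text{Diff}(S^3\times S^3)$ is a product of the swap and twists $\phi_\alpha,\tau\phi_\alpha\tau$. The twists above realize the portion of the mapping class group detected by $\pi_3(\text{Diff}(S^3))$, but any residual mapping classes (for instance Gromoll--Meyer type twists arising from free $SU(2)$-actions on $S^3\times S^3$, or pseudoisotopy-detected classes) will need a parallel treatment. In each such case a similar canonical-variation argument on an appropriate principal bundle structure should apply, but this requires a classification of the generators and case-by-case verification; once this is in hand, the subgroup closure argument finishes the proof.
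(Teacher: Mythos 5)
Your reduction---observing that the set $H$ of classes $[\phi]$ for which $g_0$ and $\phi^*g_0$ are connected through $\Ric>0$ metrics is a subgroup of $\pi_0\Diff(S^3\times S^3)$, and then checking this on a generating set---is precisely the strategy of the paper's proof in Section \ref{s:mappingS3S3}. Your canonical-variation argument for the twist diffeomorphisms is also correct in substance and is genuinely \emph{more elementary} than the paper's treatment: the paper deduces the twist case from the equivariant Theorem \ref{t:equivariant_mapping_class_S3xS2}, which passes through the iterated bundle $S^3\times S^3\stackrel{S^1}{\longrightarrow}N\stackrel{S^3}{\longrightarrow}S^2$, Yang--Mills connections in Coulomb gauge, and a Moser-type volume normalization; your three-leg path $(1,0)\to(\epsilon,0)\to(\epsilon,1)\to(1,1)$ through the $(s,t)$-square achieves positivity for each specific twist generator with far less machinery. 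Two technical corrections: you should take $\alpha\colon S^3\to SO(4)$ rather than $\alpha\colon S^3\to SU(2)$, since $\pi_3(SU(2))\cong\dZ$ only captures the left-twist $(x,y)\mapsto(x,xy)$, whereas $\pi_3(SO(4))\cong\dZ\oplus\dZ$ also carries the right twists and conjugations that occur among the paper's generators, and the adjoint-trivialization formula for $g_{s,t}$ must be rewritten accordingly; and note that the swap $\tau$ reverses orientation (since $3\cdot 3$ is odd), though this is harmless as $\tau$ is a $g_0$-isometry.

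The genuine gap is the one you flag yourself: you do not establish that the swap together with the twist classes generates $\pi_0\Diff(S^3\times S^3)$, and without this the argument only proves the lemma on the (a priori proper) subgroup generated by twists. This is not an incidental loose end but the decisive input. The paper fills it by invoking Kreck \cite{Kreck} and Krylov \cite{Krylov}: the mapping class group surjects onto $\text{SL}(2,\dZ)$ via its action on $H_3$, the kernel $\cK$ is $2$-step nilpotent with the extension $0\to\dZ_{28}\to\cK\to\dZ\times\dZ\to 0$, and the whole group is generated by four explicit twist diffeomorphisms $\phi_1,\dots,\phi_4$, each of which is of your form $\phi_\alpha$ or $\tau\phi_\alpha\tau$. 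The Gromoll--Meyer and pseudoisotopy-type classes you worry about correspond exactly to the $\dZ_{28}$ factor, and Krylov's theorem shows these arise as commutators of the twist generators rather than requiring a separate geometric realization. Once the Kreck--Krylov generation theorem is in hand, your canonical-variation path closes the argument.
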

\begin{remark}
Observe that if $\phi\in \text{Diff}_0(S^3\times S^3)$ is isotopic to the identity then the above is trivial as one can take $g_t=\phi_t^*g_{S^3\times S^3}$ to all differ from the standard metric by diffeomorphisms.  If $[\phi]\in \pi_0\text{Diff}(S^3\times S^3)$ is not the trivial element, the above is of course much more subtle.
\end{remark}

An equivariant version of the above will be one of the driving mechanisms allowing us to untwist our actions as the scale increases and slide Euclidean rotations of $S^3\times\dR^4$ at scale $r_j$ to spherical rotations of $S^3\times \dR^4$ at scale $r_{j+1}$.  This process will be described in detail in the next Sections.\\

Geometrically we will have at large scales that $M$ typically looks like a cone over a lens space $\approx C(S^3_s/\dZ_k)$ for some sphere size $s\leq 1$ and $k\in \dN$.  As the scale increases the size of spheres $S^3_s$ will decrease until $M$ is close to a ray, and when the ray opens again $M$ will become close to a potentially different lens space $\approx C(S^3_s/\dZ_{k'})$.  This process will repeat indefinitely, and in the case of a $\dQ/\dZ$-fundamental group one can arrange it so that a cone over every possible lens space occurs infinitely often.  In particular, the tangent cones of $M$ at infinity will include $C(S^3_s/\dZ_k)$ for every choice of $k\in \dN$ and $0\leq s\leq 1$.  It is important to note that the basepoint for the tangent cone at infinity may not always be the cone point itself.  In addition to these lens space tangents, by blowing up at the scale of the actions when $k\to \infty$ we will also see tangent cones at infinity of the form $\dR^3\times S^1$.   \\

We are left with the following open question:

\begin{question}
	If $M^n$ satisfies $\Ric\geq 0$ with $n=4,5$, or $6$, then is $\pi_1(M)$ finitely generated?
\end{question}

The techniques of this paper need to be extended to work in lowest dimensions, and so the above are important open questions.  Additionally, our examples are quite collapsed in nature.  The issue of finite generation is still open in the noncollapsed setting:

\begin{question}
	If $(M^n,g,p)$ satisfies $\Ric\geq 0$ with the universal cover $\tilde M$ noncollapsed, i.e. $\Vol(B_r(\tilde p))\geq v r^n>0$ for all $r>0$, then is $\pi_1(M)$ finitely generated?
\end{question}

\vspace{.5cm}

\section*{Acknowledgements}
The first author would like to express gratitude for the financial support received from Bocconi University. The author acknowledges the support provided by the Giorgio and Elena Petronio Fellowship at the Institute for Advanced Study, where a part of this work was conducted. Additionally, he is grateful to the Northwestern University for the hospitality during his visit. 
\\

The second author was funded by NSF Grant 1809011 during much of this work.\\

The last author was supported by the European Research Council (ERC), under the European's Union Horizon 2020 research and innovation programme, via the ERC Starting Grant  “CURVATURE”, grant agreement No. 802689, while he was employed at the University of Oxford until August 2022. He was supported by the Fields Institute for Research in Mathematical Sciences with a Marsden Fellowship from September 2022 to December 2022. He is currently supported by the FIM-ETH Z\"urich with a Hermann Weyl Instructorship. He is grateful to these institutions, to the Northwestern University and to the Institute for Advanced Study in Princeton, for the excellent working conditions during the completion of this work.

\vspace{.5cm}

\section{Geometric and Topological Outline for Theorem \ref{t:main_milnor}}\label{s:outline_milnor_construction}

The focus of this Section is to describe in broad strokes the Example of Theorem \ref{t:main_milnor}.  In particular, we will begin in Section \ref{ss:outline_milnor:topology_milnor} by outlining the main topological ingredients in the construction, and then in Section \ref{ss:outline_milnor:geometry_milnor} we want to describe the large scale geometry of the construction.  Both of these discussions are meant to help draw an intuitive picture as a preamble to the careful construction given in the next Section.\\

\subsubsection{\bf Decomposing the group $\Gamma\leq \dQ/\dZ$}\label{ss:outline_milnor_construction:decomposition} Let us begin by choosing in $\Gamma\leq \dQ/\dZ\subseteq S^1$ a nested sequence of
finitely generated subgroups $\{e\}=\Gamma_{-1}\leq \Gamma_0\leq \Gamma_1\leq \cdots$ which generate $\Gamma$ in the sense that for every $\gamma\in \Gamma$ we have that $\gamma\in \Gamma_j$ for some $j$ sufficiently large.  For instance such a sequence of subgroups may be built using that $\Gamma$ is countable and choosing an enumeration.  A finitely generated subgroup $\Gamma_j\leq \dQ/\dZ$ is necessarily finite and generated by a single element $\gamma_j\in \Gamma_j$.  In this way we can write 
\begin{align}\label{e:outline_milnor:Gamma}
	\Gamma_j = \big\langle \gamma_j, \Gamma_{j-1}\big\rangle \text{ and $\exists!$ minimal $k_j\in \dN$ such that } \gamma_j^{k_j}=\gamma_{j-1}\, .
\end{align}
It will be convenient to adopt the notation $k_{\le j}\equiv k_0\cdot k_1\cdots\cdot k_j$, for $j\in \mathbb{N}$ and we shall denote by $|\gamma|$ the order of any $\gamma\in\Gamma$.  Notice that, with this notation, $|\gamma_j|=k_{\le j}$.  There is no harm in assume that $k_j>1$ for each $j$, as otherwise $\Gamma_{j}=\Gamma_{j-1}$. \\  

\begin{example}
Let $p$ be a prime and $\Gamma = \langle 1,p^{-1},p^{-2},\ldots\rangle\leq \dQ/\dZ$ be the set of rationals which can be written as a finite series $\gamma = \sum a_i p^{-i}$ with $0\leq a_i<p-1$.  In this case we let $\gamma_i = p^{-i}$, so that $k_i=p$ for all $i$.  We have that $\Gamma_j=\{\gamma = \sum_1^j a_i p^{-i}\}$.  $\qed$	
\end{example}

\begin{example}
Let $\Gamma = \dQ/\dZ$.  Let us choose $k_j$ to cyclically evaluate at the primes, that is
\begin{align}
	\{k_j\} = 2;2,3;2,3,5;2,3,5,7\ldots
\end{align} 
and let $\gamma_j\equiv \frac{1}{k_{\leq j}}$ .  Thus $\Gamma_j$ is the set of all rationals whose denominators are products of primes up to some order and power.  We can take subsequences of $\{k_j\}$ which converge to any prime or to $\infty$. $\qed$	
\end{example}

\begin{example}\label{ex:Gamma=Q_all_integers}
Let $\Gamma = \dQ/\dZ$.  Let us choose $k_j$ to cyclically evaluate at all the integers, that is
\begin{align}
	\{k_j\} = 2;2,3;2,3,4;2,3,4,5\ldots
\end{align} 
and let $\gamma_j\equiv \frac{1}{k_{\leq j}}$ .  We can take subsequences of $\{k_j\}$ which converge to any element of $\dN$ or to $\infty$. $\qed$	
\end{example}

Let us make a few useful observations about the induced structure.  For each $\gamma\in \Gamma$ we can then uniquely write it as

\begin{align}
	\gamma = \prod_{j} \gamma_j^{a_j}\, , \text{ such that }a_j<k_j\, ,
\end{align}
where at most a finite number of $a_j$ are nonvanishing.  Note that there is the short exact sequence $0\to \Gamma_j\to \Gamma\to \Gamma/\Gamma_j\to 0$.  This does not split as a group splitting of course, however the choice of basis builds for us a splitting of sets
\begin{align}\label{e:Gamma_splitting}
	&\Gamma = \Gamma_j\oplus \Gamma/\Gamma_j\, ,\text{ given by }\notag\\
	&\gamma = \gamma_{\leq j}\cdot\gamma_{>j} = \prod_{i\leq j} \gamma_i^{a_i}\cdot \prod_{i>j} \gamma_i^{a_i}\, .
\end{align}

\begin{remark}
It is possible, and helpful, to include into the discussion the case where $\Gamma$ is finitely generated, or equivalently $\Gamma=\Gamma_j$ for some $j$.  This is more in line with how our inductive construction in Section \ref{s:outline_milnor:inductive} will proceed.  However our main focus is of course on the case where $\Gamma$ is not finitely generated.
\end{remark}

\vspace{.3cm}

\subsection{Topological Outline of $(\tilde M,\tilde p,\Gamma)$}\label{ss:outline_milnor:topology_milnor}

Let us open with the topological construction of $\tilde M$ with its group action by $\Gamma$.  We will not worry in this subsection about geometry or preserving Ricci curvature.  Indeed the viewpoint we will take in Section \ref{s:outline_milnor:inductive} when we carefully construct our space will be quite different, however the point of view we use here is particularly convenient for understanding the global structure of our space.\\

\subsubsection{Identifying $\tilde M$ with a Directed Graph} In order to visualize the space it is helpful to build the following directed graph $(V,E)$ of vertices and directed edges.  We should think of each vertex as a copy of $S^3\times D^4\approx S^3\times \dR^4$. 
If a $v^a$ is a given vertex we will sometimes write $S^3\times D^4_a$ in order to explicitly understand that the copy of $S^3\times D^4$ we are staring at is the one represented by $v^a$.\\  

A directed edge $E_{ab}$ will represent for us a gluing.  Given a vertex $v^a\approx S^3\times D^4_a$ note that its boundary is a single $S^3\times S^3$.  This boundary will be glued into the target vertex $v^b\approx S^3\times D^4_b$ by removing a smaller disk $S^3\times D^4_b\setminus (S^3\times D^4_{ab})$ with $D^4_{ab}\subseteq D^4_b$ and choosing a gluing map $\phi_{ab}:\partial (S^3\times D^4_a)\to \partial(S^3\times D^4_{ab})$.  Note that we can identify $\phi_{ab}:S^3\times S^3\to S^3\times S^3$.  The exact choices of $D_{ab}$ and $\phi_{ab}$ will be discussed after the enumeration of the vertices and edges is complete, however it is worth pointing out that the $\phi_{ab}$ will be a nontrivial element of the mapping class group, with the goal of twisting our underlying action.  We see from the above that we should expect each vertex to be the base of at most one directed edge, although it may be the target of multiple edges.\\

In order to enumerate our vertices $V=\{v^a\}$ it is convenient to decompose them as a disjoint union $V=\cup_j V_j$ as follows.  Recall that we will have a global $\Gamma$ action on the end manifold, and so for each of our subgroups $\Gamma_j$ let $V_j=\{v^a_j\}$ represent those vertices whose associated $S^3\times D^4_{a_j}$ will be preserved under the $\Gamma_j$ action.  In this case we will always have that $\Gamma_j\leq S^1$ is induced by the $(1,k_{\le j-1})$-action, which is to say that the generator $\gamma_j$ will (left) Hopf rotate the $\dR^4$ factor by $2\pi/k_j=2\pi k_{\le j-1}/|\gamma_j|$ and will (left) Hopf rotate the $S^3$ factor by $2\pi/|\gamma_j|=2\pi/(k_0\cdots k_j)$.  We will use the notation
\begin{align}
	\theta\cdot_{(a,b)}(g_1,g_2) = (a\theta\cdot g_1,b\theta\cdot g_2)\, ,
\end{align}
where 
$\theta\cdot g$ denotes the (left) Hopf rotation of $S^3$ by angle $\theta$.  We refer to $\cdot_{(a,b)}$ as the $(a,b)$-Hopf rotational action. We will often be viewing $\dR^4=C(S^3)$, and hence the Hopf rotation of $S^3$ naturally induces a rotation on $\dR^4$. Analogous considerations hold for $D^4$, which we view as the ball centered at the origin of $\dR^4$.\\  

Identifying the vertices $v^a_{j}$ with glued copies of $S^3\times D^4_{a_j}$ in the end manifold $(\tilde M,\tilde p, \Gamma)$, we see that we should expect an induced $\Gamma/\Gamma_j$ action on $V_j$.  In fact, this action will be a transitive and free action.  Hence we will enumerate $V_j$ by identifying it directly with
\begin{align}
	V_j \equiv \big\{v^a_j : a\in \Gamma/\Gamma_j\big\} \, .
\end{align}
Thus as a set we have identified $V_j$ with $\Gamma/\Gamma_j$, and from the point of view of our construction we will have one $\Gamma_j$-preserved $S^3\times D^4$ neighborhood per element of $\Gamma/\Gamma_j$.\\

In order to build our directed graph we also need our edges.  Each vertex $v^a_j$ will be the base one of edge, and so we can we view the edges as a map $E:V\to V$.  We will see that each vertex $v^a_j$ will be the target of $k_j$ edges, and indeed on each $V_j\subset V$ our edge map is given by
\begin{align}
	E:V_j\to V_{j+1}\,  \text{ by }\,  E[v^a_j] \equiv v^a_j/\Gamma_{j+1}\, .
\end{align}
In particular, as $|\Gamma_j/\Gamma_{j-1}|=k_j$ we have that if $v^b_j\in \Gamma/\Gamma_j$ then there are exactly $k_{j}$ elements $v^a_{j-1}\in \Gamma/\Gamma_{j-1}$ for which $v^b_j=v^a_{j-1}/\Gamma_j$, as claimed.\\

\subsubsection{The Gluing Maps}\label{ss_gluingtopo}

We have now identified our set of vertices 
\begin{align}
	V=\bigoplus_j V_j =\bigoplus_j \Gamma/\Gamma_j\, ,
\end{align}
and our edges by $E[v^a_j] = v^a_j/\Gamma_{j+1}$.

Let us fix $v^b_j\approx S^3\times D^4_b$ and let $v^a_{j-1}\in \Gamma/\Gamma_{j-1}$ be the $k_j$ vertices such that $v^a_{j-1}/\Gamma_j = v^b_j$.  As discussed in the last subsection, associated to each $v^a_{j-1}\approx S^3\times D^4_a$ there is a disk $D^4_{ab}\subseteq D^4_b$ and a gluing map $\phi_{ab}:S^3\times S^3\to S^3\times S^3$ identifying the boundaries $\partial (S^3\times D^4_{a})$ with $\partial(S^3\times D^4_{ab})$.  Let us discuss these disks and mappings.\\

To begin, note that in the equivalence class $[v^b_j]\in \Gamma/\Gamma_j$ there is a distinguished element with $v^b_j\in \Gamma/\Gamma_{j-1}$. More precisely, if
\begin{equation}
[v^b_j]=\prod_{i>j} \gamma_i^{a_i} \in \Gamma/\Gamma_j\, ,
\end{equation}
then we can write
\begin{equation}
v^b_j\equiv e\cdot\prod_{i>j} \gamma_i^{a_i}=\prod_{i>j} \gamma_i^{a_i}\in \Gamma/\Gamma_{j-1}\, .
\end{equation}

Then we can identify the elements of $v^a_{j-1}\in V_{j-1}$ for which $v^a_{j-1}/\Gamma_j = v^b_j$ as the collection $\{\gamma_j^a v^b_j\}\in V_{j-1}$ for $a=0,\ldots ,k_j-1$.  Now consider the disk $S^3\times D^4_{b}$ and let $D_{0b}= B_r(x^0)\subseteq D_b$ be any ball $0\not\in B_{2r}(x^0)$ which is not too close to the origin, and for which $r<<k_j^{-1}$ .  Let $x^a$ be the rotation of $x^0$ by angle $2\pi a/k_j$, and hence $D_{ab}=B_r(x^a)$ is the rotation of $D_{0b}$ by angle $2\pi a/k_j$.  Note that this is a set $\{D_{ab}\}$ of $k_j$ disjoint balls in $D^4_b$.  By definition the set $\bigcup_a S^3\times D^4_{ab}$ is invariant under the action by $\Gamma_j$.\\

Now we need to define the gluing maps $\phi_{ab}:\partial (S^3\times D^4_{a})\to \partial(S^3\times D^4_{ab})$, that is maps $\phi_{ab}:S^3\times S^3\to S^3\times S^3$.  The challenge is that we need the gluing maps to respect the $\Gamma_j$ actions, and the identity map does not do this.  More specifically, if we consider the glued space
\begin{align}
	\Bigg(S^3\times \big(D^4_b\setminus\bigcup D^4_{ab}\big)\Bigg) \bigcup_{\phi_{ab}}S^3\times D^4_{a}\, ,
\end{align}
then we want a well defined $\Gamma_j$ action.  The $\Gamma_j$ action should restrict to the $(1,k_{\le j-1})$ action on each $S^3\times\big(D_b\setminus\bigcup D_{ab}\big)$, so that in particular the $\Gamma_{j-1}$ action on $S^3\times \big(D_b\setminus\bigcup D_{ab}\big)$ should be the $(1,0)$-action.  However, this same $\Gamma_{j-1}$ action should be the $(1,k_{\le j-2})$ action on each glued copy of $S^3\times D^4_{a}$.  If we unwind this, this is telling us we need a diffeomorphism $\phi_{j}:S^3\times S^3\to S^3\times S^3$ such that
\begin{align}
	\phi_j\Big(\theta \cdot_{(1,k_{\le j-2})}(g_1,g_2)\Big) = \theta\cdot_{(1,0)}\phi_j(g_1,g_2)\, .
\end{align}
In fact we have such a diffeomorphism, see Section \ref{ss:equivariant_metric:examples}.  This diffeomorphism is not isotopic to the identity, a point which causes some trouble on the geometric side of the gluing procedure, see Section \ref{ss_Step2description} for more on this.  For the topological picture we will now define $\phi_{ab}:\partial (S^3\times D^4_{a})\to \partial(S^3\times D^3_{ab})$ by
\begin{equation}
	\phi_{ab}(g_1,g_2)=\gamma_j^a\cdot\phi_j(g_1,g_2)\, .\notag\\ 
\end{equation}
Note that the gluing maps are built precisely so that the $\Gamma_{j-1}$ action on $S^3\times D^4_{a}$ extends to an action of $\Gamma_j$ on the glued space $\Bigg(S^3\times \big(D^4_b\setminus\bigcup D^4_{ab}\big)\Bigg) \bigcup_{\phi_{ab}}S^3\times D^4_{a}$.

\vspace{.3cm}

\subsubsection{Construction of $(\tilde M, \Gamma)$}

Let us build our global space $\tilde M$ as follows.  Having defined our collection of vertices $\{v^a\}\in V=\bigoplus V_j =\bigoplus \Gamma/\Gamma_j$ let us first consider the disjoint collection
\begin{align}
	\bigcup_j\bigcup_{V_j} S^3\times D^4_{b_j}\, ,
\end{align}
where we have assigned to each vertex $v^b_j\in V_j$ a copy of the disk cross a sphere.  Observe that there is a free action of $\Gamma$ on this space, where if $\gamma\in \Gamma$ then its action on $S^3\times D^4_{b_j}$ may be understood in the following manner.  Recall that $\Gamma$ has a set splitting as in \eqref{e:Gamma_splitting} defined by our choice of basis, so that each $\gamma\in \Gamma$ can be written $\gamma=\gamma_{\leq j}\cdot\gamma_{>j}$ where
\begin{align}
	\gamma_{\leq j}=\prod_{i\leq j}\gamma_i^{a_i}\in \Gamma_j\, ,\text{ and }\gamma_{> j}=\prod_{i> j}\gamma_i^{a_i}\in \Gamma/\Gamma_j\, .
\end{align}

Note there is the defined action of $\gamma_{\leq j}\in \Gamma_j\leq S^1$ on $S^3\times D^4_{b_j}$ induced by the $(1,k_{\le j-1})$ action which Hopf rotates $D^4$ at speed $2\pi/k_j$ and Hopf rotates $S^3$ at speed $2\pi/|\gamma_j|=2\pi/(k_0\cdots k_j)$.  Additionally, $\gamma_{>j}\in \Gamma/\Gamma_j$ naturally acts on $b_j\in \Gamma/\Gamma_j$.  This will tell us that $\gamma\cdot:S^3\times D^4_{b_j}\to S^3\times D^4_{\gamma_{>j}\cdot b_j}$.  This action is almost given by the composition of the $\gamma_{\leq j}$ action on $S^3\times D^4_{b_j}$ and the identification of $S^3\times D^4_{b_j}$ with $S^3\times D^4_{\gamma_{>j}\cdot b_j}$, however there is one slight subtle point.  Recall the splitting $\Gamma=\Gamma_j\oplus \Gamma/\Gamma_j$ is a splitting of sets however and not groups, so to understand the extension of the action of $\Gamma_j$ to $\Gamma$ let us identify $b_j\in \Gamma/\Gamma_j$ as
\begin{align}
	b_j=\prod_{i>j}\gamma_i^{b_{ij}}\, ,
\end{align}
where as usual $0\leq b_{ij}<k_i$.  Consider the product (in $\Gamma$)
\begin{align}
	\gamma_{>j}\cdot b_j = \prod_{i> j}\gamma_i^{a_i}\cdot\gamma_i^{b_{ij}} = \gamma_{j}^{c_j}\cdot \prod_{i> j}\gamma_i^{c_i} \in \Gamma\, ,
\end{align}
where $0\leq c_i<k_i$.  Note that $\prod_{i> j}\gamma_i^{c_i}\in \Gamma/\Gamma_j$ is the natural product of $\gamma_{>j}$ and $b_j$ in $\Gamma/\Gamma_j$, and either $c_j=0$ or $c_j=1$.  Then the action of $\gamma$ on $S^3\times D^4_{b_j}$ is given by the composition of the action $\gamma_j^{c_j}\gamma_{\leq j}:S^3\times D^4_{b_j}\to S^3\times D^4_{b_j}$ and the identity map $S^3\times D^4_{b_j}\to S^3\times D^4_{\gamma_{>j}\cdot b_j}$.  The gluing maps in the previous subsection were built precisely to make this extend to a global action of $\Gamma$.\\

Now for each directed edge $E_{ab_j}$ let us remove the corresponding ball $D_{ab_j}\subseteq D_{b_j}$ as in the last subsection:
\begin{align}
	\bigcup_j\bigcup_{V_j} S^3\times \big(D^4_{b_j}\setminus \bigcup_{E_{a b_j}} D^4_{a b_j}\big)\, .
\end{align}
Observe that the action of $\Gamma$ restricts to an action of the above.  Finally let us observe that for each directed edge $E_{ab_j}$ we have defined the corresponding gluing maps $\phi_{ab_j}:\partial(S^3\times D^4_{a})\to \partial(S^3\times D^4_{ab_j})$ which were built precisely to commute with the above action of $\Gamma$.  Thus we arrive at our end space
\begin{align}\label{eq:tildeM}
	\tilde M \equiv \Bigg(\bigcup_j\bigcup_{V_j} S^3\times \big(D^4_{b_j}\setminus \bigcup_{E_{a b_j}} D^4_{a b_j}\big)\Bigg){ \Bigg/_{\{\phi_{ab_j}\in E\}}}\, , 
\end{align}
together with its free action by $\Gamma$.  In \eqref{eq:tildeM} it is understood that the boundary components $\partial(S^3\times D^4_{a})$ and $\partial(S^3\times D^4_{ab_j})$ are identified according to the directed edges $E_{ab_j}\in E$ and via the diffeomorphisms $\phi_{ab_j}$.  We refer to Section \ref{s:outline_milnor:inductive} for an alternative (but equivalent) approach to the definition of the total space $\tilde{M}$ which has a more geometric flavor.\\

\vspace{.3cm}

\subsection{Geometric Outline of $(\tilde M,\tilde p,\Gamma)$}\label{ss:outline_milnor:geometry_milnor}

We described in the previous subsection the topological construction of the universal cover $\tilde M$ from Theorem \ref{t:main_milnor} together with its free action by $\Gamma$.  In this subsection we want to understand the broad geometry of $\tilde M$.  We will mostly concern ourselves with a rough Gromov Hausdorff picture of what is happening, with only some mild comments toward the finer geometric and topological points.  In the next subsection we will introduce a precise inductive construction that will put the pictures of this subsection and the last together more comprehensively.  The geometric viewpoint will have a different flavor than the topological construction, as we will focus ourselves more locally as we move up in scale.  This will also be the convenient viewpoint for the inductive construction in Section \ref{s:outline_milnor:inductive}.\\

For a chosen basepoint $\tilde p\in \tilde M$ let us look at the ball $B_r(\tilde p)$ and consider the local group $\Gamma_r \equiv \langle \gamma\in \Gamma: d,\tilde p, \gamma\cdot \tilde p)\leq r\rangle$ generated by those actions which move $\tilde p$ at most $r>0$.  The local groups $\Gamma_r\subseteq \Gamma$ will then necessarily be monotone increasing, and there will be discrete radii $r_j$ at which the local group jumps.  We will have for $r_j\leq r<r_{j+1}$ that $\Gamma_{r} = \Gamma_j \leq \dQ/\dZ\subseteq S^1$ as in \eqref{e:outline_milnor:Gamma}.  In particular, at scale $r_j$ we will add one new generator $\Gamma_j = \langle\gamma_j,\Gamma_{j-1}\rangle$ to the local group.  As usual we will denote by $k_j$ the minimal integer for which $\gamma_j^{k_j}=\gamma_{j-1}\in \Gamma_{j-1}$ becomes the generator of $\Gamma_{j-1}$.  In this way the local group is always generated by a single element, and what is happening at scale $r_j$ is that this element is changing.\\

\subsubsection{Geometry at Scale $r_j$} Let us then roughly describe what $\tilde M$ looks like on the scales $r_j$, and then next we will even more roughly describe what happens to $\tilde M$ between scales $r_j$ and $r_{j+1}$.  At each scale $r_j$ the manifold $\tilde M$ will be Gromov-Hausdorff close to a ball in $S^3\times \dR^4$.  Indeed the space will be mostly diffeomorphic
and nearly isometric to $S^3\times \dR^4$ at scale $r_j$, however as in the gluing construction of Section \ref{ss_gluingtopo} there will be $k_j$ small balls around the local orbit $\Gamma_j\cdot \tilde p$ which will contain a good deal of topology at smaller scales.  It is worth pointing out that for $j$ large the sphere factor $S^3$ will have scale invariantly decreasing radius, so that from a Gromov-Hausdorff point of view the space is looking increasingly like $\dR^4$\\  

Note that there is a $T^2=S^1\times S^1$ action on $S^3\times \dR^4$.  The first $S^1$ acts freely on the $S^3$ factor by Hopf rotating.  The second $S^1$ acts on the $\dR^4=C(S^3)$ factor by Hopf rotating the unit sphere.  For $(a,b)\in \dZ\times \dZ$ there is an induced $S^1$ action on $S^3\times \dR^4$ through the homomorphic embedding $S^1\to S^1\times S^1$ given by $\theta\mapsto (a\theta, b\theta)$.  That is, the $(a,b)$-action of $S^1$ will Hopf rotate the spheres of $\dR^4=C(S^3)$ at speed $b$ and will Hopf rotate $S^3$ at speed $a$.  Note that if $a$ and $b$ are coprime then this is a free action.  The size of the $3$-sphere will be growing, but go to zero relative to $r_j$, and so from a pure Gromov-Hausdorff point of view the space will be close to $\dR^4$ at the scales $r_j$. \\

Now on the scale $r_j$ the action of the generator $\gamma_j\in \Gamma_j$ will look like a rotation of the $\dR^4$ factor by $2\pi/k_j$, and a Hopf rotation of the $S^3$ factor by $2\pi/|\gamma_j|=2\pi/(k_0\cdots k_j)$.  If we view $\Gamma_j\leq S^1$ then the action of $\Gamma_j$ is the one induced by the $(1,k_{\le j-1})$-$S^1$ action as above, where we recall that we set $k_{\le j}\equiv k_0\cdot k_1 \cdots k_j$.  Observe that $\Gamma_{j-1}$ is generated by $\gamma_{j-1}=\gamma_j^{k_j}$, and therefore it looks like a rotation of purely the $S^3$ factor.  The basepoint $\tilde p$ should {\it not} be viewed as the center of the rotation of $\gamma_j$ in $\dR^4$.  The center of the rotation $\approx S^3\times\{0\}$ will be a central $3$-sphere. 
The point $\tilde p$ should be viewed as a point of distance roughly $k_jr_j$ from the center of this rotation.  In this way $d(\tilde p, \gamma_j\cdot \tilde p)= r_j$ and the size of the orbit of the $\Gamma_j$ action is roughly $k_j r_j$.\\

\subsubsection{Geometry between Scales $r_j$ and $r_{j+1}$}\label{ss:geometryrjrj1}  
We have described that at scale $r_j$ the space is close to $S^3\times \dR^4$ and the local group $\Gamma_j$ looks primarily like a rotation of the $\dR^4$ factor.  Let us now discuss very roughly what happens between scales $r_j$ and $r_{j+1}$.  Observe that for the picture of the last paragraphs to hold, something substantial must have happened.  Indeed, let us consider the group $\Gamma_j$ at scales $r_j$ and $r_{j+1}$.  At both of these scales the space looks like $S^3\times \dR^4$, however the action of $\Gamma_j$ on the bottom $r_j$-scale rotates both factors, while on the top $r_{j+1}$-scale it rotates only the second factor.  In particular the action of the generator $\gamma_j$, which looks mostly like a rotation of $\dR^4$ on the bottom scale, has slid in to become just a rotation of $S^3$ on the top scale.  \\

The topological mechanism for this twisting was described in Section \ref{ss_gluingtopo}, namely we needed to glue these two copies of $\dR^4\times S^3\approx D^4\times S^3$ together by a boundary map $\phi_j:S^3\times S^3\to S^3\times S^3$ which is homotopically nontrivial, and which commutes with the action by untwisting
\begin{align}
	\phi_j\big(\theta\cdot_{(1,k_{\le j-1})}(g_1,g_2)\big)=\theta\cdot_{(1,0)}\phi_j(g_1,g_2)\, .
\end{align}
Let us give a different viewpoint here which is geometrically convenient.  Between scales $r_j$ and $r_{j+1}$ our space will be diffeomorphic to an annulus in $S^3\times \dR^4$, or equivalently diffeomorphic to an annulus $A_{r_j,r_{j+1}}(0)\subseteq C(S^3\times S^3)$.  Very roughly, we can view the metric on this annulus as $dr^2+r^2 g_r$, where $g_r$ is a family of metrics on $S^3\times S^3$.  We know that the top and bottom scales are very close to $S^3\times \dR^4$, and so to first approximation we can say that $g_{r_j}$ and $g_{r_{j+1}}$ are isometrically very close to a product of two spheres $S^3_\delta\times S^3_1$, where the subscript denotes the radius and we are viewing $0<\delta<<1$.  Note that $C(S^3_1)=\dR^4$ and the small sphere is playing the role of $S^3$ cross factor.  However we understand from Section \ref{s:equivariant_Ricci} that these isometries are very different, and indeed not even isotopic to one another.  That is, even if $g_{r_{j+1}}$ and $g_{r_j}$ are isometric, as tensors we do not have $g_{r_{j+1}}\approx g_{r_j}$ but instead have $g_{r_{j+1}}\approx \phi_j^*g_{r_j}$, where $\phi_j:S^3\times S^3\to S^3\times S^3$ is a diffeomorphism as above.  So although the geometry at scales $r_j$ and $r_{j+1}$ begins and ends at the same point, we should be interpreting these two copies of $S^3\times \dR^4$ very differently.  Step 2 of Section \ref{s:outline_milnor:inductive} will discuss this in greater detail, and see Section \ref{s:step2_proof} for the precise discussion.  Note for precision sake that the metric is not a cone metric at the beginning and end, and that the two product $3$-spheres at the top and bottom scales will be of very different size. \\

Geometrically, the rough description of the geometry of $dr^2+r^2 g_r$ on the region between scales $r_j$ and $r_{j+1}$ is as follows.  The metric $g_r$ on $S^3\times S^3$ begins at $r_j$ so that the space is isometrically very close to $S^3\times \dR^4=S^3\times C(S^3_1)$.  As the first sphere is very small, and indeed how small will be scale invariantly going to zero as $j$ increases, this is Gromov-Hausdorff close to $\dR^4$.  Then slowly in $r$ the metric will shrink the second $S^3$ factor, so that geometrically our space becomes a ray $\dR^+$.  Now the complicated twisting of the cross sections from Section \ref{s:equivariant_Ricci} will take place, however geometrically the space will look roughly like a ray this whole time.  Finally the metric will reexpand to $g_{r_{j+1}}$, which is again isometrically very close to $S^3\times \dR^4$, albeit a very different copy of $S^3\times \dR^4$.\\

\subsubsection{Transitioning from Scale $r_j$ to Scale $r_{j+1}$} After the action has been untwisted between scales $r_j$ and $r_{j+1}$, let us remark that there is an additional challenge when the next generator $\gamma_{j+1}$ enters the picture.  At scale $r_{j+1}$ we again look close to $S^3\times \dR^3$, however we then suddenly see $k_{j+1}$ copies of our original space appear.  Geometrically this will occur on very (scale-invariantly) small balls, and so from a broad geometrical viewpoint the space will still look Gromov Hausdorff close to $\dR^4$.  These new copies will be identified by the $\gamma_{j+1}$ action, as our local group has jumped.  Step 3 in Section \ref{s:outline_milnor:inductive} will deal with this issue with more care, and see Section \ref{s:Step3_proof} for the precise discussion. \\

\subsubsection{\bf Tangent Cones at Infinity of $\tilde M$ and $M$}\label{ss:tangentcones}

Let us consider a sequence of radii $s_j\to \infty$ and understand the limits of $(s_j^{-1}\tilde M,p,\Gamma)$ and $(s_j^{-1}M,p)$.  After passing to subsequences (and reindexing) we can break ourselves down into various cases depending on how $s_j$ compares to our naturally defined scales $r_j$ from before.

\subsubsection{The scales $s_j=r_j$}

Let us begin with the base case of understanding the sequence  $(r_j^{-1}\tilde M, p, \Gamma)$ on the universal cover.  We have determined that $\tilde M$ looks very close to $S^3\times \dR^4$ at these scales with (scale invariantly) shrinking sphere factor.  In particular, we have that geometrically the tangent cone at infinity along this sequence gives $r_{j}^{-1}\tilde M\to \dR^4$.  The action of $\gamma_j$ at scale $r_j$ is visible as a rotation by angle $2\pi/k_j$ of the $\dR^4$ factor with respect to a basepoint distance $k_j$ away. Therefore to understand the equivariant limit we need to break ourselves into two cases.  Namely, after passing to subsequences either $k_j$ converges or not.\\

\subsubsection{The scales $s_j=r_j$ with $k_j\to k<\infty$}

In this case the action of $\gamma_j$ looks like a rotation with respect to a point distance $kr_j$ away from $p$, and so we have that $(r_j^{-1}\tilde M, p, \Gamma)\to (\dR^4,p_\infty,\dZ_k)$ where $\dZ_k$ is acting by rotation around the origin and $p_\infty$ is a point distance $k$ from the origin.  We get that the quotient space 
\begin{align}
	(r_j^{-1}M,p)\to (C(S^3_1/\dZ_k),p_\infty)
\end{align}
limits to a cone over a lens space.  The basepoint $p_\infty$ of this limit is again a point distance $k$ from the cone point.

\subsubsection{The scales $s_j=r_j$ with $k_j\to\infty$}

In this case the action of $\gamma_j$ is looking increasingly like a translation by $\dZ$, and we get that $(r_j^{-1}\tilde M, p, \Gamma)\to (\dR^4,0,\dZ)$ where $\dZ$ acts by unit translation.  The quotient space in this case limits 
\begin{align}
	r_j^{-1}M\to \dR^3\times S^1\, .
\end{align}

\subsubsection{The scales $r_j<s_j<<k_jr_j$ with $k_j\to \infty$}.  In the case that $k_j\to k$ remains bounded there is no distinction between this case and the last.  Therefore, we are only concerned with the case where we have some subsequence for which $k_j\to \infty$.  In this situation note with $\frac{s_j}{r_j},\frac{k_jr_j}{s_j}\to \infty$ that our $\dZ$ action is looking increasingly like an $\dR$ action.  Our limit in this case becomes $(s_j^{-1}\tilde M, p, \Gamma)\to (\dR^4,0,\dR)$, where $\dR$ is acting by translation.  Our quotient space is therefore limiting
\begin{align}
	s_j^{-1}M\to \dR^3\, .
\end{align}

\subsubsection{The scales $s_j\approx k_jr_j$ when $k_j\to \infty$}  Note the action of $\gamma_j$ at these scales looks like a rotation by angle $2\pi/k_j$.  In particular, we get that $(s_j^{-1}\tilde M, p, \Gamma)\to (\dR^4,p_\infty,S^1)$, where $S^1$ is a rotation around the origin.  Our basepoint is now roughly distance $1$ from the center of the rotation.  In particular our quotient limit is given by
\begin{align}
	(r_j^{-1}M,p)\to (C(S^2_{1/2}),p_\infty)\, .
\end{align}

\subsubsection{The scales $k_jr_j<<s_j << r_{j+1}$ when $k_j\to k<\infty$}. We discussed that at scale $s_j\approx k_jr_j$ we have $s_j^{-1}\tilde M$ looks like $\dR^4=C(S^3_1)$.  As $\frac{s_j}{k_j r_j}$ increases our cross section sphere $S^3_s$ begins to decrease in radius until it looks like a half ray.  Therefore we get the possible limits $(s_j^{-1}\tilde M,p,\Gamma)\to (C(S^3_s),0,\dZ_k)$ for all $0\leq s\leq 1$.  In the case when $\frac{s_j}{k_jr_j}$ becomes sufficiently large we get that the limit is a half ray with the trivial action.  Our quotient limits in this range are therefore
\begin{align}
	(s_j^{-1}M,p)\to (C(S^3_s/\dZ_k),p_\infty)\, ,
\end{align}
for all $0\leq s\leq 1$.\\

\subsubsection{The scales $s_j\to r_{j+1}$}
As the scale $s_j$ continues to increase to $r_{j+1}$, we have that the half ray reopens up so that we again have $s_j^{-1}\tilde M\approx \dR^4$.  However, as it reopens the $\Gamma_j$ is now a trivial action.   As we approach scale $r_{j+1}$ a new $\gamma_{j+1}$ action appears and we repeat the above process. $\qed$\\

Let us make several quick observations about this process.  In the case $\Gamma=\dQ/\dZ$ we can choose $k_j$ so that every $k\in \dN$ appears infinitely often, see Example \ref{ex:Gamma=Q_all_integers}.  Consequently, all of the cones
\begin{align}
	M_\infty \equiv C(S^3_s/\dZ_{k})\, ,
\end{align}
appear as tangent cones at infinity for all $s\in [0,1]$ and $k\in \dN$.  Geometrically, when we start at a scale for which the space $M$ looks like a cone over a lens space $C(S^3/\dZ_{k})$, then as the scale increases the cross section shrinks so that the space looks like a half ray.  As the scale continues to increase the space expands to again look like a cone over a lens space $C(S^3/\dZ_{k'})$.  However, when it reopens it may appear to be a different lens space. \\  

The last point to remark on is that though every tangent cone at infinity is a metric cone, the pointed limit does not always have the cone point as the base point.  This is in agreement with \cite{Sormanilinear}, where we understand some tangent cones at infinity need to not be polar {\it with respect to} the base point.

\vspace{.5cm}

\section{Inductive Construction for Theorem \ref{t:main_milnor}}\label{s:outline_milnor:inductive}

Let us now describe our construction for Theorem \ref{t:main_milnor} in more technical detail.  The proof will be set up in an inductive fashion, where we will build a sequence of pointed manifolds $(M_j,p_j,\Gamma_j)$ with $\Ric_j\geq 0$ together with free uniformly discrete isometric actions by $\Gamma_j$.  This Section will begin with a description of the main properties of our inductive sequence $M_j$, together with how one proves Theorem \ref{t:main_milnor} once this sequence has been constructed.  The induction criteria will be such that building $(\tilde M,\Gamma)$ from the inductive sequence will be relatively straightforward.\\  

The remainder of this Section will then focus on proving the induction, namely on how to construct $M_{j+1}$ from $M_j$ in order to complete the induction proof.  The construction will boil down to three major steps, and in each step we will state one of our three main inductive Propositions.  These Propositions will be proved in remaining Sections of the paper, and thus the proof of Theorem \ref{t:main_milnor} will be complete in this Section modulo these main Propositions.\\  

Let us now set the stage for a precise statement of our induction criteria.  Recall that we have chosen as in \eqref{e:outline_milnor:Gamma} a sequence of finitely generated subgroups $\Gamma_j\leq \Gamma$ with $\Gamma_j = \langle \gamma_j,\Gamma_{j-1}\rangle$ which are all generated by a single element $\gamma_j$ such that $\gamma_j^{k_j}=\gamma_{j-1}$ .  From the point of view of the topological construction of Section \ref{s:outline_milnor_construction} we can view the sequence $(M_j, p_j,\Gamma_j)$ as the manifold obtained under the construction tree with $\Gamma = \Gamma_j$.\\

Our geometric construction will be based on a sequence of parameters $\epsilon_j\to 0$ and $\delta_j\to 0$.  We may begin by choosing any sequence $\epsilon_j\to 0$.  Indeed any sequence of constants $\epsilon_j<1$ will do, but in our description of the tangent cones at infinity of $\tilde M$ in Section \ref{ss:tangentcones} we have used that these constants tend to zero, which gives a slightly cleaner picture. Let $\delta_1<<1$ also be any small constant, the remaining $\delta_j$ will be chosen based on applications of our Inductive Propositions. We shall adopt the notation $A_{s_1,s_2}(p)$ to denote the annuls $B_{s_2}(p)\setminus B_{s_1}(p)$ for any $0\le s_1<s_2\le \infty$.\\

Our sequence $(M_j,p_j,\Gamma_j)$ will inductively be assumed to satisfy:\\

\begin{enumerate}
	\item[(I1)] There exists a free isometric action by $\Gamma_j$ on $M_j$ with $r_j\equiv d(p_j,\gamma_j\cdot p_j)$ and $\frac{r_j}{k_{j-1} r_{j-1}} >> 1 $.
	\item[(I2)] There exists an isometry $\Phi_j:U_j\subseteq M_j\to M_{j+1}$ with $B_{10 k_j r_j}(p_j)\subseteq U_j\subseteq B_{10^3 k_j r_j}(p_j)$ with $\Phi_j(p_j)=p_{j+1}$, where $U_j$ is $\Gamma_j$ invariant with $\Phi_j(x\cdot \gamma)=\Phi_j(x)\cdot \gamma$ for all $\gamma\in \Gamma_j\leq \Gamma_{j+1}$ .
	\item[(I3)] $M_j\setminus U_j$ is isometric to $S^3_{\delta_j r_j}\times A_{10^{2}k_jr_j,\infty}(0)\subseteq S^3_{\delta_j r_j}\times C(S^3_{1-\epsilon_j})$ \footnote{Observe that this is isometrically very close to $S^3\times \dR^4$.  Indeed, in our setup $U_j$ itself is very Gromov-Hausdorff close to $S^3\times \dR^4$.}.  The action of $\gamma_j$ in this domain rotates the cross section $S^3_{1-\epsilon_j}$ of the cone factor by $2\pi/k_j$ and the $S^3_{\delta_j r_j}$ factor by $2\pi/|\gamma_j|=2\pi/(k_0k_1\cdots k_j)$.
\end{enumerate} 
\vspace{.2cm}
\begin{remark}
	It follows from (I3) that the orbit of the action of $\Gamma_j$ has diameter roughly $k_j r_j$.
\end{remark}
\begin{remark}
	It will be clear from the construction that $\frac{r_{j+1}}{k_{j} r_{j}} \to \infty$.  That is, the scale of the action of the next generator $\gamma_{j+1}$ relative to the orbit of the previous generator $\gamma_j$ is tending to infinity.
\end{remark}

Before discussing more carefully the structure of the spaces $M_j$ above, let us quickly see that if such an inductive sequence as above can be built, then we are done.  Indeed, consider first the $\Gamma_i$-equivariant isometries $\Phi_{ji}=\Phi_j\circ\cdots\circ\Phi_i:U_i\to U_{ji}\equiv \Phi_{ji}(U_i)\subseteq M_j$.  We can take an abstract equivariant pointed Gromov-Hausdorff limit of the sequence $(M_j,p_j,\Gamma_j)$. However the setup is such that we can also simply define the direct limit
\begin{align}
	\tilde M\equiv\big\{ (x_j,x_{j+1},\ldots): x_{k+1}=\Phi_k(x_k) \text{ for all }k\geq j\big\}/\sim\, ,
\end{align}
where there is an equivalence relation $(x_{j},x_{j+1},\ldots)\sim (y_{j'},y_{j'+1},\ldots)$ if there exists $k\geq \max\{j,j'\}$ such that $x_k=y_k$.   By the equivariance of the isometries $\Phi_{i}$ we have that $\Gamma_j$ naturally acts on all sequences $(x_k,x_{k+1},\ldots)$ with $k\geq j$.  In particular there is an induced action of $\Gamma$ on $\tilde M$.  Note that $U_j\subseteq M_j$ all embed isometrically into $\tilde M$ and exhaust $\tilde M$, and the restriction of the $\Gamma_j$ action to $U_j\subseteq \tilde M$ is the expected action.   Thus $\tilde M$ is a smooth Riemannian manifold with $\Ric\geq 0$ and a free discrete isometric action by $\Gamma$, as claimed.

\vspace{.2cm}
\subsection{The Steps of the Inductive Construction}

We will break down this inductive construction into three steps.  Each will involve a Proposition which will form the main constructive ingredient in the step.  Our goal in this subsection is then to discuss these steps and state the Propositions.  We will then see how to finish the induction given these results.  Future sections will then be dedicated to proving each of these Propositions individually.\\

The first step will build our background model space $\cB(\epsilon,\delta)\approx S^3\times \dR^4$.  It will form the basis of both our base step of the induction, and also the underlying space for which previous induction manifolds $M_j$ will be glued into in order to form $M_{j+1}$.  From the point of view of the topological construction of Section \ref{s:outline_milnor_construction}, there will be one copy of a background model space per vertex in our construction tree.  The construction of the model space $\cB(\epsilon,\delta)$ is actually a fairly standard one, but it will help with the exposition to isolate it and discuss the role it plays.\\

The second step will deal with the action twisting described in Section \ref{ss:geometryrjrj1} .  Each $M_j$ looks like $S^3\times \dR^4$ at infinity with the action $\Gamma_j$ induced by the $(1,k_{\le j-1})$-Hopf $S^1$ action.  The first step in building $M_{j+1}$ is to equivariantly twist $M_j$ to a new manifold $\hat M_j$, so that after our twisting $\hat M_j$ again looks like $S^3\times \dR^4$ at infinity but this time the $\Gamma_j$ action is induced by the $(1,0)$-Hopf action.\\

The third step of the inductive construction is to take our twisted $\hat M_j$ and glue in $k_{j+1}$ copies into a new base manifold $\cB_{j+1}$.  The gluing is such that we have now extended the $\Gamma_j$ action on $\hat M_j$ to a $\Gamma_{j+1}=\langle \gamma_{j+1},\Gamma_j\rangle$ action on $M_{j+1}$ in the appropriate fashion.  From the point of view of the topological construction, there will be one gluing per directed edge in our construction tree.

\vspace{.25cm}

\subsubsection{\bf Step 1: The Background Model Space $\cB(\epsilon,\delta)$}

Our construction will begin by building a background manifold $\cB(\epsilon,\delta)$.  The space will both play the role of base step in the inductive construction, and additionally when we move from $M_j$ to $M_{j+1}$ the basis for our construction will be to glue in $k_{j+1}$ copies of $M_j$ into the background space $\cB_{j+1}$.  From the point of view of our construction tree in Section \ref{ss:outline_milnor:topology_milnor}, there will eventually be one copy of $\cB_j$ per vertex $v_j\in V_j = \Gamma/\Gamma_j$.\\ 

The construction of $\cB(\epsilon,\delta)$ is relatively straightforward, we will simply take $S^3\times \dR^4$ and slightly curve the $\dR^4$ factor in order to give it a slight cone angle.  The precise setup is the following:

\begin{proposition}[Step 1: The Model Space]\label{p:step1}
	For each $\delta>0$ and $1>\epsilon>0$ , there exists a smooth manifold $\cB^7=\cB(\epsilon,\delta)$ such that the following hold:
	\begin{enumerate}
		\item $(\cB^7,g_B,p)$ is a complete Riemannian manifold with $\Ric\geq 0$, it is diffeomorphic to $S^3\times \dR^4$.
		\item There exists $B_{10^{-3}}(p)\subseteq U\subseteq B_{10^{-1}}(p)$ such that $\cB\setminus U$ is isometric to $S^3_{\delta}\times A_{10^{-2},\infty}(0)\subseteq S^3_{\delta}\times C(S^3_{1-\epsilon})$ 
		\item There is an isometric $T^2=S^1\times S^1$ action on $\cB$ for which on $\cB\setminus U\approx S^3_{\delta}\times C(S^3_{1-\epsilon})$ the first $S^1$ acts on the $S^3_{\delta}$ factor by a globally free (left) Hopf rotation and the second $S^1$ acts on the cross sections $S^3_{1-\epsilon}$ of the cone factor by (left) Hopf rotation. 
		\item The $S^1$-action induced by the homomorphic embedding $S^1\ni \theta\mapsto (a\theta,b\theta)\in T^2$ is free whenever $(a,b)\in \dZ\times \dZ$ are coprime and $a\neq 0$.
	\end{enumerate}
\end{proposition}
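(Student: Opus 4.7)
The plan is to construct $\cB$ as the Riemannian product $(S^3, g_{S^3_\delta}) \times (N^4, g_\epsilon)$, where $(N^4, g_\epsilon)$ is a rotationally symmetric smoothing of the cone $C(S^3_{1-\epsilon})$: it is flat Euclidean near the origin and coincides with the exact cone $dr^2 + (1-\epsilon)^2 r^2 g_{S^3}$ outside a compact set. The $T^2$-action will then arise as the product of commuting left Hopf rotations on the two $S^3$-factors.

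First, I would fix a smooth concave function $f\colon[0,\infty)\to[0,\infty)$ with $f(r)=r$ on $[0,1]$, $f'(r)\equiv 1-\epsilon$ on $[r_0,\infty)$ for some $r_0>1$, $f''\leq 0$ throughout, and $f'$ smoothly interpolating from $1$ to $1-\epsilon$ on $[1,r_0]$. The warped-product metric $g_\epsilon = dr^2+f(r)^2 g_{S^3}$ is then smooth across the origin (where $f(r)=r$ gives a flat Euclidean cap), and on $[r_0,\infty)\times S^3$ it becomes the exact cone $C(S^3_{1-\epsilon})$ after a shift $s=r-s_0$ of the radial coordinate for a suitable constant $s_0<0$. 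The standard warped-product Ricci formulas
\[
\Ric(\partial_r, \partial_r) = -3f''/f, \qquad \Ric(X,X)/|X|^2 = \frac{2(1-(f')^2)}{f^2} - \frac{f''}{f}
\]
for $X$ tangent to the $S^3$ fibre, combined with $f''\leq 0$ and $f'\leq 1$ (the latter from concavity and $f'(0)=1$), give $\Ric_{g_\epsilon}\geq 0$.

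Next, set $\cB = S^3\times\dR^4$ with $g_B = g_{S^3_\delta}+g_\epsilon$ and basepoint $p=(g_0, 0)$. Then $\cB$ is complete, diffeomorphic to $S^3\times\dR^4$, and $\Ric_{g_B}\geq 0$ as a product of Ricci-nonnegative metrics (with $g_{S^3_\delta}$ of constant positive Ricci). Choosing any open set $U$ with $B_{10^{-3}}(p)\subseteq U\subseteq B_{10^{-1}}(p)$ that contains the smoothing region of $g_\epsilon$, the complement $\cB\setminus U$ is exactly isometric to $S^3_\delta\times A_{10^{-2},\infty}(0)\subseteq S^3_\delta\times C(S^3_{1-\epsilon})$, settling (1) and (2).

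For the $T^2$-action, let the first $S^1$ act on $S^3_\delta$ by left Hopf rotation (and trivially on $N^4$), and the second $S^1$ act on $N^4$ by rotating each cross-sectional $S^3$ of the warped product via left Hopf (and trivially on $S^3_\delta$). Each factor is an isometry because the warping is purely radial and Hopf rotation is an isometry of the round $S^3$, so the $T^2$-action is isometric. The first $S^1$ is globally free since left Hopf on $S^3_\delta$ is free, giving (3). For (4), the induced $S^1$-action $\theta\cdot(g,x)=(a\theta\cdot_L g, b\theta\cdot x)$ acts freely on $\cB\setminus(S^3_\delta\times\{0\})$ because both Hopf factors act freely on their respective $S^3$ slices, so the stabilizer there is trivial. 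The step I expect to require genuine care is the analysis at the second-$S^1$ fixed locus $S^3_\delta\times\{0\}$, where freeness of Hopf on $S^3_\delta$ reduces the stabilizer condition to $a\theta\equiv 0 \pmod{2\pi}$, and one must use the coprimality of $(a,b)$ together with $a\neq 0$ to control the intersection of the $(a,b)$-circle with the isotropy of $(g, 0)$ in $T^2$. This cap analysis is the main technical point: conditions (1)--(3) are immediate from the product structure with the warped-product smoothing, while (4) requires a careful description of the $T^2$-action near the cone tip.
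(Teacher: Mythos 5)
Your construction is identical in substance to the paper's: take a round $S^3_\delta$ cross a rotationally symmetric smoothing $dr^2 + f(r)^2 g_{S^3}$ of the cone $C(S^3_{1-\epsilon})$, with $f$ concave, $f(r)=r$ near the tip, and $f'\equiv 1-\epsilon$ outside a compact set, and then read off $\Ric\geq 0$ from the standard warped product formulas. The paper writes the same metric as the doubly warped product $dr^2 + \delta^2 g_{S^3}+h(r)^2 g_{S^3}$ with $h(r)=r$ on $[0,10^{-5}]$, $h''<0$ on $[10^{-5},10^{-3}]$, and $h(r)=10^{-4}+(1-\epsilon)(r-10^{-4})$ for $r\geq 10^{-3}$, chosen specifically so that the smoothing region sits inside $B_{10^{-1}}(p)$; you should pin your $r_0$ down similarly to genuinely land in the prescribed range $B_{10^{-3}}(p)\subseteq U\subseteq B_{10^{-1}}(p)$, but that is only a rescaling.

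Your sketch of item (4) contains a gap, and since you explicitly single it out as the delicate step it deserves a concrete comment. At a cap point $(g,0)\in S^3_\delta\times\{0\}$ the isotropy of the full $T^2$--action is $\{0\}\times S^1$: the second circle fixes the cone tip, while the first Hopf circle acts freely on $S^3_\delta$. The stabilizer of the $(a,b)$--circle is therefore
\begin{equation*}
\big\{\theta\in S^1:\ a\theta\equiv 0\ (\mathrm{mod}\ 2\pi)\big\}\ \cong\ \dZ_{|a|}\, ,
\end{equation*}
and coprimality of $a$ and $b$ is irrelevant here because $b$ has no effect at the tip. So the $(a,b)$--$S^1$--action is literally free on all of $\cB$ only when $|a|=1$; for $|a|\geq 2$ the cap carries a nontrivial $\dZ_{|a|}$ stabilizer. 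The cap analysis you flag as the main technical point should register this fact, rather than expect coprimality to rescue freeness. For what it is worth, the paper's own proof of this proposition does not spell out item (4), and in every subsequent application the induced subgroup actions it uses come from $(1,k)$--embeddings, i.e.\ always $a=1$, so the issue never actually bites.
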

\begin{remark}
	Thus for each $(a,b)\in \dZ\times \dZ$ we have the induced $(a,b)$-$S^1$ action given by the homomorphic embedding $S^1\ni \theta\mapsto (a\theta,b\theta)\in T^2$.
\end{remark}

{\bf Base Step: } Let us then define the base step of our induction as $M_1=\cB(\epsilon_1,\delta_1)$ as above.  We will equip $M_1$ with the the isometric group action of $\Gamma_1\leq S^1$, which is induced by the $(1,k_{0})$-action as above.  In particular, on $S^3_{\delta_1}\times S^3_{1-\epsilon_1}$ we have that the generator $\gamma_1$ will act by Hopf rotating $S^3_{\delta_1}$ by $2\pi/|\gamma_1| = 2\pi/(k_1k_0)$ and by Hopf rotating the cross section of $C(S^3_{1-\epsilon_1})$ by $2\pi/k_1=2\pi k_0/|\gamma_1|$.\\

\vspace{.3cm}

\subsubsection{\bf Step 2:  The Equivariant Mapping Class Group and Twisting the Geometry of $M_j$ at Infinity}\label{ss_Step2description}

By condition (I3) of the induction we know that outside some compact set $U_j$ our space $M_j\setminus U_j$ is isometric to $S^3_{\delta_j r_j}\times A_{10^{2}k_jr_j,\infty}(0)\subseteq S^3_{\delta_j r_j}\times C(S^3_{1-\epsilon_j})\approx S^3\times \dR^4$.  Further, we understand that in this region the action of the generator $\gamma_j\in \Gamma_j$ looks primarily like a rotation of the $\dR^4$ factor.  More precisely, it rotates the $\dR^4$ factor by $2\pi/k_j$ and it rotates the $S^3_{\delta_j r_j}$ factor by the much smaller $2\pi/|\gamma_j| = 2\pi/(k_0\cdots k_j)$. \\

In Step 3 we will be gluing $k_{j+1}$ copies of $M_j$ into a model space $\cB_{j+1}$, and in the gluing region we will again have that $\cB_{j+1}\approx S^3\times \dR^4$.  However, the action of $\Gamma_j$ on $\cB_{j+1}$ will look like a rotation of just the $S^3$ factor without any rotational bit on the $\dR^4$ factor.  Thus to accomplish the gluing we will need to modify $M_j$ at infinity into a new space $\hat M_j$, which will again look close to $S^3\times \dR^4$ but for which the action of $\gamma_j$ is now purely a rotation of the $S^3$ factor.\\

In order to address this problem let us first consider $S^3\times S^3$ with the standard metric $g_{S^3\times S^3}$, and let us recall that if $(a,b)\in \dZ\times \dZ$ then we have the $S^1$-isometric action $\cdot_{(a,b)}:S^3\times S^3\times S^1\to S^3\times S^3$ which acts by $a$ times the (left) Hopf rotation on the first $S^3$ and $b$ times the (left) Hopf rotation on the second $S^3$.  The following will provide for us how the cross sections of our new space $\hat M_j$ will be twisting.  It will be proved in Section \ref{s:equivariant_Ricci}:\\

\begin{theorem}[Equivariant Mapping Class Group on $S^3\times S^3$]\label{t:equivariant_mapping_class_S3xS2}
	Let $g_0=g_{S^3\times S^3}$ be the standard metric on $S^3\times S^3$, and let $k\in \dZ$.  Then there exist a diffeomorphism $\phi:S^3\times S^3\to S^3\times S^3$ and a family of metrics $(S^3\times S^3,g_t)$ such that
	\begin{enumerate}
		\item $\Ric_t>0$ for all $t\in [0,1]$
		\item The $S^1$-action $\cdot_{(1,k)}$ on $S^3\times S^3$ is an isometric action for all $g_t$.
		\item $g_1 = \phi^*g_0$ with $\phi\big(\theta\cdot_{(1,k)}(s_1,s_2)\big)= \theta\cdot_{(1,0)}\phi(s_1,s_2)$ .
	\end{enumerate}
\end{theorem}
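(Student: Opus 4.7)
The plan is to split Theorem~\ref{t:equivariant_mapping_class_S3xS2} into two tasks: first, exhibit a diffeomorphism $\phi:S^3\times S^3\to S^3\times S^3$ satisfying the intertwining condition (3), and second, construct a smooth family $g_t$ of $\cdot_{(1,k)}$-invariant metrics with $\Ric>0$ joining $g_0$ to $\phi^*g_0$.

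For the first task, I would look for $\phi$ in the form $\phi(p,q)=(p,\psi(p,q))$. The intertwining condition then reduces to
\[
\psi(e^{i\theta}\cdot p,\,e^{ik\theta}\cdot q)=\psi(p,q),
\]
together with the requirement that $\psi(p,\cdot):S^3\to S^3$ be a diffeomorphism for each $p$. Equivalently, one seeks a section of the associated bundle $S^3\times_{S^1}\text{Diff}(S^3)\to S^2$, where $S^1$ acts on $\text{Diff}(S^3)$ by precomposition with Hopf rotation by $-k\theta$. The primary obstruction lives in $\pi_1(\text{Diff}(S^3))\cong\dZ/2$ (by Hatcher's Smale conjecture), and it vanishes because the Hopf loop $\theta\mapsto(v\mapsto e^{i\theta}v)$ in $\text{SO}(4)\subset\text{Diff}(S^3)$ lifts to a closed loop $\theta\mapsto(e^{i\theta},1)$ in $\text{Spin}(4)=S^3\times S^3$, hence is null-homotopic. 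Equivalently one writes $\phi$ down explicitly via a clutching of trivializations over the two hemispheres of $S^2$.

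For the second task, I would work equivariantly and mimic the strategy used to establish Lemma~\ref{l:main_mapping_class}, now in the presence of the $\cdot_{(1,k)}$-symmetry. Concretely: (a) deform $g_0$ through $\cdot_{(1,k)}$-invariant positive-Ricci metrics to a highly symmetric model metric $\tilde g$ for which the action of $\phi$ becomes geometrically transparent --- for instance a $T^2$-invariant Berger-type metric for which $\phi^*\tilde g$ remains in an accessible symmetric family; (b) connect $\tilde g$ to $\phi^*\tilde g$ by an isotopy of $\cdot_{(1,k)}$-invariant positive-Ricci metrics, monitoring the Ricci tensor through the O'Neill-type submersion formulas available for $T^2$-invariant metrics on $S^3\times S^3$; (c) pull back the deformation in (a) by $\phi$ to obtain a path from $\phi^*\tilde g$ to $\phi^*g_0$. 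Concatenation of these three arcs produces the desired family $g_t$.

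The main obstacle will be step (b): constructing the explicit $\cdot_{(1,k)}$-equivariant, positive-Ricci isotopy from $\tilde g$ to $\phi^*\tilde g$. Since $\phi$ realizes a nontrivial element of $\pi_0\text{Diff}(S^3\times S^3)$ that genuinely rearranges Hopf fibers in the second factor, naive linear or conformal interpolations cannot simultaneously preserve the equivariance and the positivity of Ricci. One should expect to geometrically realize the untwisting through a parametrized construction that incarnates the clutching from the first task at the level of metrics, exploiting the $T^2$-symmetry to reduce to a finite-dimensional family of invariant metrics and invoking the standard toolkit for positive-Ricci metrics on highly symmetric manifolds to ensure that $\Ric>0$ is preserved throughout the deformation.
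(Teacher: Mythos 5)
Your decomposition into (i) finding $\phi$ with the intertwining property and (ii) constructing an equivariant positive--Ricci path from $g_0$ to $\phi^*g_0$ is the right framing, and your obstruction--theoretic argument for the existence of a $\phi$ of the form $\phi(p,q)=(p,\psi(p,q))$ is sound (the paper even writes down an explicit such $\phi_k$, so this part is essentially verified). The gap is in task (ii): step (b) of your plan --- connecting $\tilde g$ to $\phi^*\tilde g$ through $\cdot_{(1,k)}$--invariant metrics with $\Ric>0$ --- is where \emph{all} the work lies, and you restate the difficulty rather than resolve it. In particular, appealing to the ``strategy used to establish Lemma~\ref{l:main_mapping_class}'' is circular: in the paper that lemma is deduced from this very theorem (Section~\ref{s:mappingS3S3}), not the other way around, so there is no independent machinery there to borrow. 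And the suggestion to reduce to ``a finite-dimensional family of $T^2$-invariant metrics'' is not obviously adequate: $\phi^*\tilde g$ need not be invariant under the original $T^2$ action (only under $\cdot_{(1,k)}$ and the image of the other circle under $\phi$), so a naive finite-dimensional $T^2$-invariant ansatz does not clearly contain both endpoints of the arc you need.

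The paper's actual construction is genuinely different from what you outline and supplies the missing mechanism. It works on the five-manifold quotient $N\equiv S^1\backslash(S^3\times S^3)$ by the $(1,k)$-Hopf action, observes that the right $S^3$-action on the second factor descends to make $N\to S^2$ a \emph{principal} $S^3$-bundle --- which is trivial because $\pi_1(S^3)=0$ --- and then deforms the Riemannian submersion data on $N$ (rounding the fibers, shrinking them, and trivializing the principal connection by an affine homotopy) while monitoring $\Ric_N>0$ via the O'Neill formulas. The metric upstairs on $S^3\times S^3$ is then reconstructed as a Riemannian $S^1$-principal bundle over $N_t$ with totally geodesic fibers, the connection being the Yang--Mills one in Coulomb gauge; this choice kills the mixed Ricci terms and forces $\Ric\ge0$, and a further warping of the $S^1$ fiber length pushes this to $\Ric>0$. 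The nontrivial $\phi$ emerges automatically at $t=1$ because $N_1=S^2\times S^3$ isometrically and the unique Yang--Mills Coulomb connection in the relevant cohomology class is the Hopf connection on the \emph{first} factor, so the induced $S^1$-action is $\cdot_{(1,0)}$. None of this quotient/principal-bundle/Yang--Mills machinery appears in your proposal, and that is precisely what is needed to carry out your step (b). You also invoke extra symmetry (``highly symmetric manifolds'', ``standard toolkit'') without identifying the relevant one: it is the commuting right $S^3$-action on the second factor, not the left $T^2$, that gives the paper its leverage.
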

\begin{remark}
	The diffeomorphism $\phi:S^3\times S^3\to S^3\times S^3$ will represent a nontrivial element of the mapping class group.  
\end{remark}
\begin{remark}	
	In Section \ref{s:mappingS3S3} we will discuss how to connect $g_{S^3\times S^3}$ and $\phi^*g_{S^3\times S^3}$ for {\it any} element of the mapping class group $[\phi]\in \pi_0\text{Diff}(S^3\times S^3)$.  However for an arbitrary element of the mapping class group we cannot necessarily keep track of the behavior of an isometric action. 
\end{remark}

\vspace{.2cm}

The above tells us that we can find an $S^1$-invariant family of metrics with positive Ricci curvature which (from an isometric point of view) start and end at the classical $S^3\times S^3$, however the beginning and ending $S^1$ actions are quite distinct.  Our main use of the above will be to build the following neck region, which will be used to alter $M_j$ to $\hat M_j$:

\begin{proposition}[Step 2: Twisting the Action]\label{p:step2}
	Let $\epsilon,\hat \epsilon,\delta>0$ with $k\in \dZ$.  Then there exist $\hat \delta(\epsilon,\hat \epsilon,\delta,k)>0$ and $R(\epsilon,\hat \epsilon,\delta,k)>1$ and a metric space $X$ with an isometric and free $S^1$ action such that
	\begin{enumerate}
		\item $X$ is smooth away from a single three sphere $S^3_\delta\times \{p\}\in X$ with $\Ric _X\geq 0$.
		\item There exists $B_{10^{-3}}(p)\subseteq U\subseteq B_{10^{-1}}(p)\subseteq X$ which is isometric to $S^3_\delta\times B_{10^{-2}}(0)\subseteq S^3_\delta\times C(S^3_{1-\epsilon})$ , and under this isometry the $S^1$ action on $U$ identifies with the $(1,k)$-Hopf action. 
		\item There exists $B_{10^{-1}R}(p)\subseteq \hat U\subseteq B_{10R}(p)\subseteq X$ s.t. $X\setminus \hat U$ is isometric to $S^3_{\hat \delta R}\times A_{R,\infty}(0)\subseteq S^3_{\hat \delta R}\times C(S^3_{1-\hat \epsilon})$, and under this isometry the $S^1$ action on $X\setminus \hat U$ identifies with the $(1,0)$-Hopf action. 
	\end{enumerate}
\end{proposition}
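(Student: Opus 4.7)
The plan is to construct $X$ topologically as $S^3 \times \dR^4$, equipped with a piecewise-prescribed $S^1$-invariant metric built from three nested regions: an inner conical region realizing the local model at $p$, an outer region realizing the model at infinity, and a middle ``neck'' implementing the action twist via Theorem \ref{t:equivariant_mapping_class_S3xS2}. The parameters $\hat\delta$ and $R$ will be selected at the end.

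For the inner region I would take an isometric copy of an open neighborhood of the cone tip in $S^3_\delta \times C(S^3_{1-\epsilon})$, large enough to contain the required $B_{10^{-1}}(p)$. A standard doubly-warped product computation using the angle defect $\epsilon>0$ yields $\Ric > 0$ on the smooth part, while the conical locus $S^3_\delta \times \{0\}$ is the unique singular three-sphere required by (1); the isometry with $S^3_\delta \times B_{10^{-2}}(0) \subseteq S^3_\delta \times C(S^3_{1-\epsilon})$ then gives (2), with the $(1,k)$-Hopf action manifestly isometric and free. For the outer region I would take an isometric copy of $S^3_{\hat\delta R} \times A_{R,\infty}(0) \subseteq S^3_{\hat\delta R} \times C(S^3_{1-\hat\epsilon})$, again with $\Ric>0$ and a free isometric $(1,0)$-Hopf action, yielding (3).

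The middle neck, spanning an interval of length on the order of $R$, is the heart of the argument. First apply Theorem \ref{t:equivariant_mapping_class_S3xS2} to obtain a smooth $(1,k)$-Hopf invariant family $\{g_t\}_{t\in [0,1]}$ on $S^3\times S^3$ with $\Ric_{g_t}>0$ uniformly and $g_1 = \phi^* g_0$. I would split the neck into three sub-intervals: two outer sub-intervals carrying doubly-warped product metrics $dr^2 + A(r)^2 h_1 + B(r)^2 h_2$ used to interpolate the sphere-factor sizes from the inner data $(\delta, (1-\epsilon)r)$ through a symmetric intermediate scale $A = B = \lambda$, and then from $\lambda$ out to the outer data $(\hat\delta R, (1-\hat\epsilon)r)$; and a central sub-interval carrying the metric $dr^2 + \lambda^2 g_{\tau(r)}$, where $\tau$ is a monotone interpolation from $0$ to $1$ that is locally constant near its endpoints. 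Gluing at the outer end of the central piece uses $\phi$: since $g_1 = \phi^* g_0$ and $\phi(\theta \cdot_{(1,k)} s) = \theta \cdot_{(1,0)} \phi(s)$, the identification is a smooth isometry that simultaneously converts the $(1,k)$-action into the $(1,0)$-action propagating into the outer region.

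The main obstacle is maintaining $\Ric\geq 0$ on the central sub-interval, where the cross-sectional metric varies nontrivially in $r$. A direct Ricci computation for $dr^2 + \lambda^2 g_{\tau(r)}$ yields, in cross-sectional directions, $\lambda^{-2}\Ric_{g_\tau}$ plus corrections of order $\tau'$, $\tau''$, and $(\tau')^2$ coming from the $r$-derivatives of $g_{\tau(r)}$; in the $\partial_r$ direction the Ricci is a trace of analogous second-derivative terms. By Theorem \ref{t:equivariant_mapping_class_S3xS2} and compactness of $[0,1]$ there is a uniform lower bound $\Ric_{g_t}\geq \kappa g_t$ for some $\kappa>0$, so spreading $\tau$ over a sufficiently long interval---which forces $R$ to be taken large---makes all correction terms arbitrarily small and preserves $\Ric\geq 0$. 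In the doubly-warped sub-intervals, $\Ric \geq 0$ reduces to elementary concavity conditions on $A$ and $B$ easily arranged while matching boundary data with vanishing derivatives. This fixes $R = R(\epsilon, \hat\epsilon, \delta, k)$ large, and $\hat\delta = \hat\delta(\epsilon, \hat\epsilon, \delta, k)$ is then determined by the terminal value of $A$ at the outer end of the neck.
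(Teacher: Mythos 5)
Your overall plan---realize the twist via the equivariant family from Theorem \ref{t:equivariant_mapping_class_S3xS2}, interpolate inward and outward with doubly-warped products, and use $\phi$ at the outer seam to convert the action---matches the paper's architecture, but the central neck as you describe it cannot have $\Ric \geq 0$, and this is a genuine gap rather than a technicality. You propose running the twist over a metric cylinder $dr^2 + \lambda^2 g_{\tau(r)}$ with $\lambda$ a constant and $\tau$ increasing from $0$ to $1$, constant near the endpoints. For any metric of the form $dr^2 + g_r$ on an interval the radial Ricci curvature is
\begin{equation}
\Ric(\partial_r,\partial_r) = -\tfrac{1}{2}\,\partial_r\!\left(g^{ab}g'_{ab}\right) - \tfrac{1}{4}\,g^{ac}g^{bd}g'_{ab}\,g'_{cd}\, ,
\end{equation}
and since $g'_r$ vanishes near both ends, integrating in $r$ at any fixed cross-sectional point gives
\begin{equation}
\int \Ric(\partial_r,\partial_r)\,dr \;=\; -\tfrac{1}{4}\int g^{ac}g^{bd}g'_{ab}\,g'_{cd}\,dr \;<\;0
\end{equation}
unless $g_r$ is literally constant.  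So $\Ric(\partial_r,\partial_r)<0$ somewhere along the neck no matter how slowly $\tau$ varies; spreading the twist out makes the radial Ricci defect small in absolute value but not nonnegative, because the product cylinder has zero radial Ricci to begin with and there is nothing to absorb the $-\tfrac14|g'|^2$ term. The paper's construction is designed precisely to overcome this: before twisting it reshapes the space into an honest cone $C(S^3_{\delta_3}\times S^3_{1/8})$ (first sharpening the $C(S^3_{1-\epsilon})$ factor to gain curvature, then growing the constant $S^3_\delta$ factor into a second cone factor), so that the twist is carried out on a warped annulus $dr^2 + h(r)^2 g_r$ with $h$ concave.  There the formula becomes $\Ric_{rr} = -6\,h''/h - \tfrac14 |g'_r|^2$, and the concavity supplies a strictly positive term that dominates the twist contribution once $R$ is taken large. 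The paper also needs Lemma \ref{l:step2:refined_twisting}, a volume-normalized and uniformly Ricci-positive refinement of Theorem \ref{t:equivariant_mapping_class_S3xS2}, so that the trace term $\partial_r(g^{ab}g'_{ab})$ drops out of the estimate.  Your proposal is missing the cone-type warping that supplies this positive radial Ricci, which is the substantive technical content of the step.
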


{\bf Constructing $\hat M_j$:} Before moving on to Step 3, let us see how the above will be used as part of our induction process. Thus let us assume we have constructed $M_j$ as in (I1)-(I3) with sphere radius $\delta_j$.  Recall by (I3) that outside of a compact subset we have that $M_j$ is isometric to $S^3_{\delta_j r_j}\times C(S^3_{1-\epsilon_j})$, and the action of $\gamma_j$ Hopf rotates the $S^3_{1-\epsilon_j}$ factor by $2\pi/k_j$ and the $S^3_{\delta_j r_j}$ factor by $2\pi/|\gamma_j|$.  Observe that if we consider the $(1,k_{\le j-1})$-Hopf $S^1$-action on $S^3_{\delta_j r_j}\times C(S^3_{1-\epsilon_j})$, then $\Gamma_j\subseteq S^1$ can be viewed as a subaction.\\

Now with any $\hat\epsilon_j>0$, the precise constant will be chosen later, we have for $R_j=R_j(\epsilon_j,\hat\epsilon_j,\delta_j,k_{\le j-1})$ and $\hat\delta_j=\hat\delta_j(\epsilon_j,\hat\epsilon_j,\delta_j,k_{\le j-1})$ the existence of $X_j$ as in Proposition \ref{p:step2}, where we chose $k=k_{\le j-1}=k_0\cdot k_1\cdots k_{j-1}$ in the application of the Proposition.  We can rescale $X_j\to r_j X_j$ by $r_j$ so that it is isometric to $S^3_{\delta_j r_j}\times C(S^3_{1-\epsilon_j})$ on a region $U$ containing $B_{r_j}(p)$, and it is isometric to $S^3_{\hat \delta_{j}R_j r_j}\times C(S^3_{1-\hat \epsilon_{j}})$ on a region $X_j\setminus \hat U$ containing the annulus $A_{R_j r_j,\infty}(p)$ .  Further, there is a free isometric $S^1$ action on $X_j$ which looks like the $(1,k_{\le j-1})$ action on $U$ and the $(1,0)$ action on $X_j\setminus \hat U$.  In particular, by condition (2) in Proposition \ref{p:step2} and the inductive assumption (I3) there is an induced $\Gamma_j$ action on $X_j$ and an open annulus of $U\subseteq X_j$ which is equivariantly isometric to an open annulus in $M_j\setminus U_j$.\\

We can thus glue $X_j$ to $M_j$ in order to produce the space $\hat M_j$.  The space $\hat M_j$ is now isometric to $S^3_{\hat \delta_{j}R_j r_j}\times C(S^3_{1-\hat \epsilon_{j}})$ outside of some compact set $V_j\subseteq \hat M_j$, and the $\Gamma_j$ action is a pure Hopf rotation on the $S^3_{\hat \delta_{j}R_j r_j}$ factor on $\hat M_j\setminus V_j$.\\

\vspace{.3cm}

\subsubsection{\bf Step 3: Gluing Construction}\label{ss:step 3 outline}

The third step of the construction involves extending the action of $\Gamma_j$ to an action of $\Gamma_{j+1}$ in order to move from the manifold $M_j$ to the next step of the induction $M_{j+1}$.  This will occur by taking $k_{j+1}$ copies of the twisted space $\hat M_j$, constructed in the second step, and gluing them into a model space $\cB_{j+1}\approx \cB(\epsilon_{j+1},\delta_{j+1})$ constructed in the first step.  From the point of view of the topological construction in Section \ref{s:outline_milnor_construction}, there is one gluing per directed edge in our construction tree.\\

Recall that a model space $\cB(\epsilon,\delta)$ is isometric to an annulus in $S^3_{\delta}\times C(S^3_{1-\epsilon})$ outside of a compact set, and recall that the induction manifolds $\hat M$ are isometric to annuli in $S^3_{\delta}\times C(S^3_{1-\hat\epsilon})$ outside of a compact set.  We will therefore outline our gluing constructions purely in terms of annuli, which is where the gluing will take place.  If we can accomplish this with the correct behaviors, we can then glue our model space $\cB_{j+1}$ and inductive manifolds $\hat M_j$ directly into our glued space and finish the inductive construction of $M_{j+1}$.\\

Let us first outline the gluing strategy without worrying about smoothness or Ricci curvature.  We will end with Proposition \ref{p:step3}, which will state the end construction in a smooth Ricci preserving manner.  We describe this in some generality, with the understanding that we will be applying it as above afterwards.  So let $\cA' \equiv S^3_{\delta}\times C(S^3_{1-\epsilon})$ and let $\hat \cA = S^3_\delta\times B_1(0)\subseteq S^3_\delta\times C(S^3_{1-\hat\epsilon})$ with $\Gamma\leq S^1$ a finite group generated by a single element $\gamma$ whose order is divisible by $k$.  Let $\hat\Gamma$ be the group generated by $\hat\gamma\equiv\gamma^k$.  Consider the action of $\Gamma$ on $\cA'$ induced by the $(1,|\gamma|/k)$-Hopf action.  Thus $\gamma$ Hopf rotates the $S^3_{1-\epsilon}$ factor by $2\pi/k$ and the $S^3_\delta$ factor by $2\pi/|\gamma|$.  Let us also consider the action of $\hat\Gamma$ on $\hat \cA$ obtained by just rotating the $S^3_\delta$ factor by $2\pi/|\hat\gamma|$ . \\

Consider $k$ copies of the annulus {$\hat\cA^a \equiv \hat\cA \times \{a\} $} with $a=0,\ldots,k-1$, and note that $\partial\hat\cA^a = S^3_\delta\times S^3_{1-\hat\epsilon}$ isometrically.  Our goal is to glue in these $k$ copies into $\cA'$ such that there is an induced $\Gamma$ action on the glued space.  We will want that $\hat\Gamma$ restricts to the usual actions on both $\cA'$ and the glued copies of $\hat \cA$.  To be more precise let $x\in C(S^3_{1-\epsilon})$ be a point which is distance is $10^2k$ from the origin.  Let $x^a\in C(S^3_{1-\epsilon})$ with $a=0,\ldots, k-1$ be the $k$ points  obtained by Hopf rotating $x^0=x$ by $2\pi a/k$. \\

Consider each of the domains $S^3_\delta\times B_{1}(x^a)\subseteq \cA'$, and note that their boundaries are diffeomorphic (and nearly isometric) to $S^3_{\delta}\times S^3_{1}$.  Note that the $\hat \Gamma$ action restricts to actions on each of these domains, while the $\Gamma$ action simply restricts to an isometry between potentially different pairs of domains.  We will want to glue $\hat \cA^0, \ldots, \hat \cA^{k-1}$ into the space

\begin{align}
	\cA'\setminus \Big(\bigcup_a S^3_{\delta}\times B_{1}(x^a)\Big)\, .
\end{align}

In order to perform the gluing we need to define the gluing diffeomorphisms
\begin{align}
	\varphi^a: \partial \hat \cA^a\to S^3_{\delta}\times \partial B_{1}(x^a)\, .
\end{align} 

Recalling that $\partial \hat \cA^0=S^3_{\delta}\times S^3_{1}$ and $S^3_{\delta}\times\partial B_{1}(x)$ is nearly isometric to $S^3_\delta\times S^3_{1}$, let us first choose an almost isometry $\varphi^0:\partial \hat \cA^0\to S^3_\delta\times\partial B_{ 1}(x)$ which is the identity on the first sphere factor.  In particular, it follows that $\varphi^0$ commutes with the natural $\hat\Gamma$ actions on each of these spaces.  Let us then define $\varphi^a: \partial \hat \cA^a\to S^3_\delta\times\partial B_{1}(x^a)$ by

\begin{align}
	\varphi^a(y,a) = \gamma^a\cdot \varphi^0(y,0)\, , \quad y\in \hat \cA\, ,
\end{align}
for $a=0, \ldots, k-1$.
Note that we could naturally extend the above maps for any $a\in \dZ$.  However, we would have that $\varphi^{k}:\partial \hat \cA^0\to S^3_\delta\times\partial B_{1}(x^0)$ would not be the same mapping as $\varphi^0$.  Indeed, we see that $\varphi^{k}= \gamma^{k}\cdot \varphi^0 = \hat\gamma\cdot \varphi^0$.  To understand the implications of this consider the glued space
\begin{align}
	\tilde \cA \equiv \Big(\cA'\setminus \bigcup_a S^3_\delta\times B_{1}(x^a)\Big)\bigcup_{\varphi^a} \hat \cA^a\, ,
\end{align}
where we have plucked out the $k$ domains $S^3_\delta\times B_{1}(x^a)$ and plugged in the new annular regions $\hat \cA^a$.  The new space $\tilde \cA$ is still isometrically of the form $S^3_\delta\times C(S^3_{1-\epsilon})$ near the origin and infinity.  The effect of the gluing maps is that the $\hat\Gamma$ action on $\hat\cA$ extends to a $\Gamma$ action on $\tilde \cA$.  To understand this action, we need to describe the action of $\gamma$ on $\bigcup_{a} \hat \cA^a$. The latter is given by
\begin{equation}
	\begin{split}
	&\gamma\cdot (y,a) = (y,a+1)\, , \quad a=0, \ldots , k-2\, 
	\\
	&\gamma\cdot (y,k-1) = (\hat \gamma\cdot y,0)
	\end{split}
\end{equation}
for every $y\in \hat \cA$.  In particular, the action of $\hat\Gamma$ restricts to the expected action on each piece of the gluing.\\

The main Proposition of this step is to show that, up to some altering of constants, the above construction can be smoothed to preserve nonnegative Ricci curvature:

\begin{proposition}[Step 3:  Action Extension]\label{p:step3}
	Let $\epsilon,\epsilon',\delta>0$ with $0<\epsilon-\epsilon'\le \frac{1}{10^2 }\epsilon$, and let $\hat\Gamma\leq \dQ/\dZ\subseteq S^1$ be a finite subgroup with $\Gamma = \langle \gamma, \hat\Gamma\rangle$ such that $\hat\gamma\equiv \gamma^k$ is the generator of $\hat\Gamma$.  Then for $\hat\epsilon\leq \hat\epsilon(\epsilon,\epsilon')$ there exists a pointed space $(\tilde \cA,p)$, isometric to a smoooth Riemannian manifold with $\Ric\ge 0$ away from $k+1$ three spheres, with an isometric and free action by $\Gamma$ such that
	\begin{enumerate}
		\item There exists a $\Gamma$-invariant set $B_{10^{-1}}(p)\subseteq U'\subseteq B_{10}(p)$ which is isometric to $S^3_\delta\times B_{1}(0)\subseteq S^3_\delta\times C(S^3_{1-\epsilon'})$ and such that $\Gamma$ is induced by the $(1,|\gamma|/k)$-Hopf action on $S^3_\delta\times S^3_{1-\epsilon'}$ ,
		\item There exists a $\Gamma$-invariant set $B_{10^3 k}(p)\subseteq U\subseteq B_{10^5 k}(p)$ such that $\tilde \cA\setminus U$ is isometric to $A_{10^4k,\infty}(0)\times S^3_\delta\subseteq S^3_\delta\times C(S^3_{1-\epsilon})$ and such that $\Gamma$ is induced by the $(1,|\gamma|/k)$-Hopf action on $S^3_\delta\times S^3_{1-\epsilon}$ 
		\item There exist $\hat\Gamma$-invariant sets $S^3_\delta\times B_{2^{-1}}(x^a)\subseteq V^a\subseteq S^3_\delta\times B_{2}(x^a)$ with $d(S^3_\delta\times \{x^a\},S^3_\delta\times \{p\})=10^2 k$ which are isometric to $S^3_\delta\times B_{1}(0)\subseteq S^3_\delta\times C(S^3_{1-\hat\epsilon})$ and such that $\hat\Gamma$ is induced by the $(1,0)$-Hopf action on $S^3_\delta\times S^3_{1-\hat\epsilon}$ .
	\end{enumerate}
\end{proposition}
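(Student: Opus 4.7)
The plan is to reduce to a four-dimensional auxiliary construction. The space $\tilde\cA$ is built as the Riemannian product $\tilde\cA = S^3_\delta\times\cN^4$, where $\cN^4$ is an $S^1$-invariant four-manifold (with $k+1$ conical singularities along $S^1$-orbits) carrying a metric with $\Ric\ge 0$ whose three distinguished regions are: a central cone $C(S^3_{1-\epsilon'})$ near the basepoint, a conical end $C(S^3_{1-\epsilon})$ at infinity, and $k$ small balls isometric to $B_1(0)\subseteq C(S^3_{1-\hat\epsilon})$ arranged on a $\dZ_k$-orbit at distance roughly $10^2 k$ from the basepoint. Since $S^3_\delta$ has $\Ric>0$ and the Riemannian product preserves Ricci lower bounds, the inequality $\Ric_{\tilde\cA}\ge 0$ holds automatically, and the $k+1$ singular three-spheres of $\tilde\cA$ are the products of $S^3_\delta$ with the $k+1$ cone tips of $\cN^4$.

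To construct $\cN^4$, I would work $S^1$-equivariantly with respect to the Hopf action. Writing the metric in $S^1$-invariant form $g_{\cN^4} = g_H + h^2\,\eta\otimes\eta$, where $\eta$ is the connection one-form of the Hopf circle bundle and $h$ is the fiber size, the Ricci condition reduces to a tractable system of inequalities on the horizontal metric and on $h$. The three model pieces are explicit $S^1$-invariant cone configurations, and between them I would interpolate using doubly warped product necks $ds^2 + f(s)^2 g_{S^3}$ (or with two separate warping factors to respect the Hopf fibration), choosing $f$ concave with $|f'|\le 1$; this gives $\Ric\ge 0$ on each neck and allows the matching of geodesic spheres of different radii. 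The cyclic $\dZ_k$-symmetry is enforced by performing the hole-cutting and filling on a single $\dZ_k$-fundamental domain and then spreading the result by the $\dZ_k\subseteq S^1$ action.

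Given such $\cN^4$, the $\Gamma$-action on $\tilde\cA$ is defined by the homomorphism sending $\gamma$ to the element of $T^2 = S^1\times S^1$ that rotates the $S^3_\delta$ factor by angle $2\pi/|\gamma|$ and rotates the Hopf $S^1$ on $\cN^4$ by angle $2\pi/k$. This makes $\gamma$ cyclically permute the $k$ glued balls, while $\hat\gamma = \gamma^k$ fixes each ball setwise and acts on it only by Hopf rotation of the $S^3_\delta$ factor, which is exactly the $(1,0)$-Hopf description required by item (3). Freeness of the $\Gamma$-action follows from freeness of the Hopf $S^1$-action on $S^3_\delta$; items (1) and (2) are obtained by restricting the product to the central and asymptotic model regions of $\cN^4$, under which the induced $\Gamma$-action is precisely the $(1,|\gamma|/k)$-Hopf action on $S^3_\delta\times S^3_{1-\epsilon'}$, respectively $S^3_\delta\times S^3_{1-\epsilon}$.

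The main technical obstacle is the 4D construction of $\cN^4$: cutting $k$ holes in a cone $C(S^3_{1-\epsilon})$ and filling them with truncated cones $B_1(0)\subseteq C(S^3_{1-\hat\epsilon})$ while maintaining $\Ric\ge 0$ requires handling three incompatibilities simultaneously (ambient-versus-insert cone angles $1-\epsilon$ and $1-\hat\epsilon$, the small ambient-versus-central cone-angle change from $1-\epsilon$ to $1-\epsilon'$, and the matching of geodesic spheres of slightly different radii). The assumption $0 < \epsilon-\epsilon' \le \epsilon/10^2$ makes the first transition small, while the freedom to choose $\hat\epsilon \le \hat\epsilon(\epsilon,\epsilon')$ arbitrarily small makes $C(S^3_{1-\hat\epsilon})$ effectively Euclidean so that the gluing necks can be spread out and smoothed by standard warped-product / Perelman-style bump lemmas without violating the Ricci inequality. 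The equivariance and the precise form of the gluing diffeomorphisms $\varphi^a = \gamma^a\circ\varphi^0$ from the informal description above then fall out automatically from the $S^1$-equivariant construction.
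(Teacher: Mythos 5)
Your plan follows the same overall route as the paper: writing $\tilde\cA$ as a Riemannian product $S^3_\delta\times\cN^4$, bending the $4$-dimensional cone via concave single-warping necks, arranging $k$ holes on a $\dZ_k$-orbit in a middle annulus and filling them with caps modeled on $C(S^3_{1-\hat\epsilon})$, and defining the $\Gamma$-action through the $T^2$-symmetry. However, there is a genuine gap at the point that carries the technical weight of the proposition, namely how to make the cap-gluing isometric while keeping $\Ric\ge 0$. You propose removing geodesic balls from the ambient bent cone and inserting truncated cones $B_1(0)\subseteq C(S^3_{1-\hat\epsilon})$, falling back on unspecified ``Perelman-style bump lemmas'' to absorb the mismatch. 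But a geodesic sphere about a point at distance $\approx 10^2k$ from the vertex of $C(S^3_{1-\epsilon})$ does not have the same induced metric or second fundamental form as $\partial B_1(0)\subseteq C(S^3_{1-\hat\epsilon})$, and your setup produces no collar on which the two $7$-dimensional metrics literally agree. Establishing that a smoothing across such an interface preserves $\Ric\ge0$ --- including the cross-term $\Ric_{ir}$ it generates --- is a nontrivial argument you have not supplied.

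The paper avoids this difficulty entirely by an exact mechanism absent from your proposal. The concave interpolation $h(r)$ bending $C(S^3_{1-\epsilon'})$ into $C(S^3_{1-\epsilon})$ is engineered so that on a whole middle interval $[s_1,s_2]$ one has $h(r)=R\sin(R^{-1}(r-r_R))$: that layer of $\cN^4$ is an exact annulus of the round sphere $S^4_R$. The caps $\hat\cA$ are likewise not truncated cones but copies of $C(S^3_{1-\hat\epsilon})$ bent at the top until their warping factor equals the same $R\sin(R^{-1}(r-\hat r_R))$, so their boundary collar is an exact geodesic annulus in $S^4_R$. After cutting $k$ geodesic balls from the $S^4_R$-layer, both sides of every interface are open subsets of $S^3_\delta\times S^4_R$ and the gluing is a genuine isometry on an open collar; no smoothing lemma is needed. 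This is also exactly where the constraint $\hat\epsilon\le\hat\epsilon(\epsilon,\epsilon')$ enters, and why it is independent of $k$: the cap's geodesic radius $\hat s\approx\sqrt{2\hat\epsilon}\,R$ must fit inside the spherical band $[s_1,s_2]$, and both $R$ and $s_2-s_1$ scale linearly in $k$. Without identifying this intermediate round $S^4_R$, the plan does not close.
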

\begin{remark}
	It is important to observe that $\hat\epsilon(\epsilon,\epsilon')$ depends on the choices of $\epsilon$ and $\epsilon>\epsilon'$, however it does not depend on the choice of $\delta$.  
\end{remark}

{\bf Constructing $M_{j+1}$:}  Let us now apply Proposition \ref{p:step3} in order to finish the construction of $M_{j+1}$.  Let us take in the above $\Gamma=\Gamma_{j+1}$ and $\hat\Gamma=\Gamma_j$, and let us choose $\epsilon=\epsilon_{j+1}$ with $\epsilon'=\epsilon_{j+1}\cdot \frac{99}{100}$.  Recall that the construction of $\hat M_j$ in Section 2 depended on a choice of $\hat\epsilon_j$, which had not yet been fixed.  Let us now use Proposition \ref{p:step3} in order to choose $\hat\epsilon_j = \hat\epsilon_j(\epsilon_{j+1})$.  From this we now have from Proposition \ref{p:step2} a well defined $R_j$ and $\hat\delta_j$.  Finally let us now choose $\delta=\hat\delta_j$ in the application of Proposition \ref{p:step2}, so that we have built the space $\tilde \cA_j$.  After rescaling $\tilde \cA_j\to (R_jr_j) \tilde \cA_j$ by $R_jr_j$ observe that there exists $U\subseteq \tilde \cA_j$ which is isometric to $S^3_{\hat\delta_j R_j r_j}\times B_{R_jr_j}(0)\subseteq S^3_{\hat\delta_j R_j r_j}\times C(S^3_{1-\epsilon'})$, and also observe that the domains $V^a$ are isometric to $ S^3_{\hat\delta_j R_j r_j}\times B_{R_jr_j}(0)\subseteq S^3_{\hat\delta_j R_j r_j}\times C(S^3_{1-\hat\epsilon_{j}})$.\\

Finally, let us consider the base model $\cB_{j+1}=\cB(\epsilon', \hat\delta_j R_j r_j)$ from Proposition \ref{p:step1}.  We see we can glue it isometrically into $U\subseteq \tilde \cA_j$.  Additionally we can isometrically glue $\hat M_j$ into each $V^a\subseteq \tilde \cA_j$.  The resulting space is $M_{j+1}$.  If we define $p_{j+1}=p_j^0$ to be the basepoint of the copy of $M_j$ glued into $V^0$, then we can define $r_{j+1}\equiv d(p_{j+1},\gamma_{j+1}\cdot p_{j+1})$ and $\delta_{j+1}$ through the formula $\delta_{j+1} r_{j+1}\equiv \hat\delta_j R_j r_j$.  This completes the induction step of the construction.  In particular, we have proved Theorem \ref{t:main_milnor} up to the proofs of Propositions \ref{p:step1}, \ref{p:step2}, and \ref{p:step3}. $\qed$\\

\vspace{.5cm}


\section{Preliminaries}

In our constructions we will exploit the well known expressions for the Ricci curvature in various setups.  We will recall and record some of them here with the relevant sources.

\vspace{.3cm}

\subsection{Riemannian Submersions}\label{ss:prelim:submersions}

The first special case we recall is that of a Riemannian submersion with totally geodesics fibers.  Our setup is that we have Riemannian manifolds $(M^n,g)$ and $(B,g_b)$ together with a Riemannian submersion
\begin{align}
	\pi: M\stackrel{F}{\longrightarrow} B\, .
\end{align}

We will assume throughout this section that the fibers $F_x\equiv \pi^{-1}(x)$ are totally geodesic submanifolds of $M$.\\

Throughout we will let $U,V,..$ denote vertical vector fields on $M$, so $U,V\in TF\equiv \cV\subseteq TM$, and we will let $X,Y,..$ denote horizontal vector fields on $M$, so $X,Y\in T^\perp F\equiv \cH\subseteq TM$.  Though we have assumed the fibers are totally geodesic, there is still a remaining piece of structure, namely the integrability tensor defined by
\begin{equation}
A_{E_1}E_2:=\mathcal{H}\nabla_{\mathcal{H}E_1}\mathcal{V}E_2+\mathcal{V} \nabla_{\mathcal{H}E_1}\mathcal{H}E_2\, ,
\end{equation}
where our notation $\cV E$ and $\cH E$ denote the projections of $E$ to the corresponding subspaces, see \cite[Definition 9.20]{Besse}.  Recall that if $X,Y$ are horizontal vector fields then
\begin{equation}
A_{X}Y=\frac{1}{2}\mathcal{V}[X,Y]\, .
\end{equation}

For the proposition below we refer the reader to O' Neil \cite{Oneill} (see also \cite[Proposition 9.36]{Besse}):

\begin{proposition}[Ricci curvature for Riemannian submersions]\label{prop:Rictotgeo}
Let $\pi:(M,g)\to (B,g_B)$ be a Riemannian submersion with totally geodesic fibers $F$. Then 
\begin{align}
\Ric_M(U,V)=&\, \Ric_F(U,V)+(AU,AV)\, ,\\
\Ric_M(U,X)=&\, \left(\text{div}_BA[X],U\right) \, ,\\
\Ric_M(X,Y)=&\, \Ric_B(X,Y)-2(A_{X},A_{Y})\, ,
\end{align}
where $ \Ric_F$ stands for the Ricci curvature of the fiber with the induced Riemannian metric and $\Ric_B$ is the Ricci curvature of the base, understood as a horizontal tensor on $M$.  
\end{proposition}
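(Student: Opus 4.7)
The strategy is to invoke O'Neill's fundamental equations for the Riemannian submersion $\pi\colon M\to B$, which decompose the components of the $(4,0)$ curvature tensor $R_M$ in terms of the horizontal lift of $R_B$, the intrinsic curvature $R_F$ of the fibers, and the two O'Neill tensors $A$ and $T$, where $T$ measures the second fundamental forms of the fibers. Since the fibers are totally geodesic by hypothesis, $T\equiv 0$, so O'Neill's equations simplify considerably and only $A$ survives as a correction.

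First I would fix a local orthonormal frame $\{X_1,\dots,X_b,V_1,\dots,V_f\}$ adapted to the splitting $TM=\cH\oplus\cV$, with the $X_i$ basic horizontal lifts of an orthonormal frame on $B$ and the $V_\alpha$ a vertical orthonormal frame, and compute each Ricci component by expanding
\[ \Ric_M(E_1,E_2)=\sum_i\langle R_M(X_i,E_1)E_2,X_i\rangle+\sum_\alpha\langle R_M(V_\alpha,E_1)E_2,V_\alpha\rangle, \]
substituting the relevant O'Neill identity term by term.

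For the purely vertical case $E_1=U$, $E_2=V$: the vertical trace equals $\Ric_F(U,V)$ by the Gauss equation applied to the totally geodesic fibers; the horizontal trace produces a $\nabla A$-piece that vanishes after summation, using the skew-symmetry $\langle A_X U,Y\rangle=-\langle A_X Y,U\rangle$ together with $A_X X=\tfrac{1}{2}\cV[X,X]=0$, and leaves an $A$-bilinear contribution which, with the paper's convention for the pairing, reads $(AU,AV)$. For the mixed component $\Ric_M(U,X)$, the simplified O'Neill identity with $T\equiv 0$ yields $R_M(V,W,U,X)=0$ for all vertical $V,W$, hence the vertical trace vanishes, while the horizontal trace rewrites as the horizontal divergence of the basic tensor $A[X]$, which by basic-ness is identified with the base divergence and gives $(\text{div}_B A[X],U)$.

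For the purely horizontal case $E_1=X$, $E_2=Y$, the horizontal trace yields $\Ric_B(X,Y)$ corrected by a term of the shape $-\sum_i\langle A_X X_i,A_Y X_i\rangle$, obtained after combining the three skew-symmetric $A$-terms in O'Neill's horizontal curvature formula, while the vertical trace, using the mixed O'Neill identity with $T\equiv 0$, contributes $-\sum_\alpha\langle A_X V_\alpha,A_Y V_\alpha\rangle$; using that $A_X$ interchanges $\cH$ and $\cV$ and is skew with respect to the metric, both sums can be recast as the single bilinear form $(A_X,A_Y)$ and they combine to $-2(A_X,A_Y)$. The main bookkeeping obstacle is tracking O'Neill's sign conventions and the normalization of the pairings $(A\,\cdot,A\,\cdot)$ consistently across the vertical and horizontal traces; once this is organized, the three identities follow as direct specializations of O'Neill's equations to $T\equiv 0$, with no further geometric input required.
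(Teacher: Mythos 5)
Your proposal is correct and follows the standard derivation: specialize O'Neill's fundamental curvature equations to $T \equiv 0$ (totally geodesic fibers) and trace over an adapted orthonormal frame, splitting each Ricci component into its horizontal and vertical traces. The paper does not reprove this result; it simply cites O'Neill and \cite[Proposition 9.36]{Besse}, where exactly the computation you sketched is carried out, so your argument is a faithful reconstruction of the cited source rather than a different route.
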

\begin{remark}
	Note that in the above proposition we have the explicit expressions
\begin{align}
&(AU,AV):=\sum_ig(A_{X_i}U,A_{X_i}V)\, ,\notag\\
&(A_X,A_Y):=\sum_ig(A_{X}X_i,A_{Y}X_i)\, ,\notag\\
&\text{div}_BA:=\sum_i\left(\nabla_{X_i}A\right)(X_i,\cdot)\, ,
\end{align}
where $\{X_i\}$ is an orthonormal basis of the horizontal space.
\end{remark}

It is helpful to record how the Ricci curvature on the total space of the Riemannian submersion changes when we perform the so called \emph{canonical variation} of the metric, i.e. we define $g_t$ by leaving the horizontal distribution unchanged, the metric on the base unchanged, and scaling the metric on the fibers by a factor $t$. Below we shall assume again that the fibers are totally geodesic, see \cite[Proposition 9.70]{Besse}.

\begin{corollary}\label{cor:canonicalvariation}
Let $\pi:(M,g)\to (B,g_B)$ be a Riemannian submersion with totally geodesic fibers and let $g_t$ the Riemannian metric on $M$ obtained by scaling the fibers metrics with a factor $t$. Then
\begin{align}
\Ric_t(U,V)=& \, \Ric_F(U,U)+t^2(AU,AV)\, ,\\
\Ric_t(X,U)= & \, t \left(\text{div}_BA[X],U\right)\, , \\
\Ric_t(X,Y)=& \, \Ric_B(X,Y)-2t\left(A_X,A_Y\right)\, .
\end{align}
Above, $A$ denotes the integrability tensor of the Riemannian submersion $\pi:(M,g)\to (B,g_B)$.
\end{corollary}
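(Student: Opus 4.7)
The plan is to apply Proposition \ref{prop:Rictotgeo} directly to $(M, g_t)$, so I first verify that the canonical variation preserves the Riemannian submersion with totally geodesic fibers structure. Since $g_t$ keeps $\mathcal{H}\perp \mathcal{V}$ and leaves $g|_{\mathcal{H}}$ unchanged, $\pi:(M,g_t)\to(B,g_B)$ is still a Riemannian submersion with the same base metric. For total geodesicity of fibers, a direct Koszul comparison using involutivity of $\mathcal{V}$ and $g_t|_{\mathcal{V}} = t\, g|_{\mathcal{V}}$ yields $g(\mathcal{H}\nabla^{g_t}_U V, Y) = t\, g(\mathcal{H}\nabla^g_U V, Y)$ for vertical $U, V$ and horizontal $Y$, and the right-hand side vanishes because fibers are totally geodesic for $g$.

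Next I compute the transformed integrability tensor $A^t$. On two horizontal fields, $A^t_X Y = \tfrac{1}{2}\mathcal{V}[X,Y] = A_X Y$ is the same vector field, since the bracket and vertical projection are metric-independent. On a horizontal $X$ and a vertical $U$, the skew-symmetry $g_t(A^t_X Y, U) = -g_t(Y, A^t_X U)$ (obtained by differentiating $g_t(Y, U) = 0$ along $X$), combined with $A^t_X Y = A_X Y$, $g_t|_{\mathcal{V}} = t\,g|_{\mathcal{V}}$ and $g_t|_{\mathcal{H}} = g|_{\mathcal{H}}$, forces $A^t_X U = t A_X U$. The fiber Ricci is unchanged by the constant rescaling of $g|_F$, and the base Ricci $\Ric_B$ is unchanged by definition of the canonical variation.

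Substituting these relations into the formulas of Proposition \ref{prop:Rictotgeo} applied to $g_t$ yields the three lines of the Corollary. For horizontal $X, Y$, $(A^t_X, A^t_Y)_{g_t} = \sum_i g_t(A_X X_i, A_Y X_i) = t(A_X, A_Y)_g$, since the summands are vertical and $\{X_i\}$ is simultaneously $g$- and $g_t$-orthonormal; this accounts for the factor $-2t$ in the third line. For vertical $U, V$, $(A^t U, A^t V)_{g_t} = \sum_i g(t A_{X_i}U, t A_{X_i}V) = t^2 (AU, AV)_g$, because $A^t_{X_i}U$ is horizontal but carries an extra factor $t$; this produces the $t^2$ in the first line.

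The mixed formula follows by the same mechanism, and is the most delicate step: one must check that the divergence tensor $\sum_i(\nabla^{g_t}_{X_i}A^t)(X_i, X)$ coincides with $\sum_i(\nabla^g_{X_i}A)(X_i, X)$ up to terms that vanish in the $g_t$-pairing with vertical $U$, so that the only surviving scaling is the single factor of $t$ coming from $g_t|_{\mathcal{V}} = t\,g|_{\mathcal{V}}$. This last bookkeeping, which amounts to tracking $\nabla^{g_t} - \nabla^g$ on the mixed horizontal-vertical-valued tensor $A^t$ via Koszul, is the main technical obstacle; once handled, the three formulas drop out of Proposition \ref{prop:Rictotgeo} by direct substitution.
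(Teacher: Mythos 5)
Your overall strategy---applying Proposition~\ref{prop:Rictotgeo} to $(M,g_t)$ after tracking how the O'Neill data transform under the canonical variation---is the standard argument and is exactly what Besse's Proposition~9.70 does; the paper itself supplies no proof, only the Besse citation, so you are providing more than the paper. Your preliminary verifications are correct: the fibers remain totally geodesic under constant fiber rescaling, $A^t_X Y = A_X Y$ for horizontal $X,Y$ because $\tfrac{1}{2}\mathcal{V}[X,Y]$ is metric-independent, the skew-symmetry argument gives $A^t_X U = t\,A_X U$, the fiber Ricci tensor is invariant under constant rescaling, and these facts yield the first and third identities by direct substitution (with the $t^2$ coming from the two factors of $t$ in $A^t U$, $A^t V$, and the single $t$ in the third line coming from the $g_t$-pairing of the two vertical vectors $A^t_X X_i$, $A^t_Y X_i$).

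The gap is the mixed formula. You correctly identify it as the delicate step and correctly describe what must be checked, but the sentence ``one must check that [the divergence] coincides with [the unscaled one] up to terms that vanish in the $g_t$-pairing'' states the required lemma rather than proving it. What is missing is the actual comparison of $\nabla^{g_t}$ with $\nabla^g$ on the data entering $\text{div}_BA$: using a frame of basic horizontal fields and the vanishing of the $T$-tensor, one must show that the horizontal part of $\nabla^{g_t}_X Y$ and the vertical part of $\nabla^{g_t}_X U$ agree with those of $\nabla^g$, so that the vertical vector $\sum_i (\nabla^{g_t}_{X_i}A^t)(X_i,X)$ equals $\sum_i(\nabla^g_{X_i}A)(X_i,X)$ and the single factor of $t$ in $\Ric_t(X,U)=t\,g\!\left(\text{div}_BA[X],U\right)$ arises purely from $g_t|_{\mathcal{V}}=t\,g|_{\mathcal{V}}$ in the final pairing. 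This is a short Koszul computation, but until it is written out the second line of the Corollary is asserted, not established.
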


\vspace{.3cm}

A particularly natural form of Riemannian submersion is obtained via principal bundles:

\begin{definition}[Riemannian Principal Bundle]\label{d:riemannian_principal_bundle}
	We call a principal $G$-bundle $P\stackrel{G}{\longrightarrow} B$ a Riemannian $G$-principal bundle if it is equipped with a Riemannian metric $g_P$ which is invariant under the $G$-action.
\end{definition}  

Observe that if $P$ is a Riemannian $G$-principal bundle then it well defines a metric $g_B$ on the base $B$ through the quotient, and as the horizontal distribution $\cH=(TG_x)^\perp$ is right invariant it well defines a principal connection $\xi\in \Omega^1(P;\mathfrak{g})$.  The remaining information is a family of right invariant metrics on the $G$ fibers, which may equivalently be viewed as a metric on the adjoint vector bundle over $B$.  Conversely, this triple of data well defines a metric $g_P$ on $P$ which is invariant under the $G$ action.\\

Consider the case when $G$ is simple and there exists a unique bi-invariant metric $\langle,\rangle$ on $G$ up to scaling.  Then given a metric $g_B$ on the base and $\xi$ a connection one form, we can write a Riemannian principal bundle structure on $P$ as
\begin{equation}\label{eq:Vilms}
g_P(X,Y)\equiv g_B(\pi_*[X],\pi_*[Y])+\lambda(x)\,\langle \xi[X],\xi[Y]\rangle_G\, ,
\end{equation} 
where $\lambda:B\to \dR^+$ determines the scaling of the fibers.  It follows from Vilms \cite{Vilms} that this metric has totally geodesic fibers iff $\lambda(x)=\lambda$ is a constant.

\vspace{.3cm}
\subsection{Riemannian Submersions and Circle Bundles}

Let us now restrict ourselves to the case of a Riemannian $S^1$-principal bundle, so that $\pi:M\to B$ is the total space of an $S^1$-principal bundle over $B$.  Note that if $(B,g_B)$ is a Riemannian manifold, then an $S^1$-invariant metric on $M$ is well defined by the additional data of a principal connection $\eta\in \Omega^1(M)$ and a smooth $f:B\to \dR^+$ which prescribes the length of the $S^1$ fiber above a point.  If $\partial_t$ is the invariant vertical vector field coming from the $S^1$ action, then we have the expressions
\begin{align}
	&\cH = \ker\eta\, ,\notag\\
	&\eta[\partial_t] = 1\, ,\notag\\
	&g(\partial_t,\partial_t) = f^2\, .
\end{align}
In the case of an $S^1$ bundle we have that $d\eta = \pi^*\omega$ where $\omega\in \Omega^2(B)$ is the curvature $2$-form, which relates to the integrability tensor $A$ on $M$ by 
\begin{align}
	A(X,Y) = -\frac{1}{2}\omega[X,Y]\,\partial_t\, .
\end{align}

The following proposition is borrowed from \cite[Lemma 1.3]{GilkeyParkTuschmann}, where it was used to show that any principal $S^1$ bundle $\pi:M\to B$ admits an $S^1$-invariant metric of positive Ricci curvature when the base $(B,g_B)$ has positive Ricci curvature and the total space has finite fundamental group. 

\begin{proposition}\label{prop:S1bundlewarped}
Let $M\stackrel{S^1}{\longrightarrow} B$ be a Riemannian $S^1$-principal bundle as above with $X$ a unit horizontal vector and $U=f^{-1}\partial_t$ a unit vertical vector. Then 
\begin{align}\label{eq:RicS^1warped}
\Ric(U,U)=&\,   -\frac{\Delta f}{f}+ \frac{f^2}{4}|\omega|^2\, ,\\
\Ric(U,X)=&\, \frac{1}{2}\left(-f\left(\text{div}_B\omega\right)(X)+3\omega[X,\nabla f]\,\right)   \, \\
\Ric(X,X)=&\, \Ric_B(X,X)-  \frac{f^2}{2}|\omega[X]\,|^2 -\frac{\nabla^2f(X,X)}{f}\, ,
\end{align}
where it is understood, when necessary, that we are identifying the horizontal vector field $X$ with an element of $TB$.
\end{proposition}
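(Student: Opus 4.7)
The strategy is a direct Koszul computation in a well-chosen frame. Near a point $p\in M$, I take horizontal lifts $X_1,\dots,X_{n-1}$ of a $g_B$-orthonormal frame about $\pi(p)$ that is $\nabla^B$-normal at $\pi(p)$, and complete to an orthonormal frame of $M$ by the unit vertical $U=f^{-1}\partial_t$, where the lifts are chosen $S^1$-invariant. The two bracket identities from which everything will follow are
\begin{align*}
[X_i,\partial_t]=0,\qquad \eta([X_i,X_j])=-d\eta(X_i,X_j)=-\omega(X_i,X_j),
\end{align*}
the first because $\partial_t$ generates the isometric $S^1$-action on invariant vector fields, the second by definition of the curvature two-form. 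These yield $[X_i,U]=-f^{-1}X_i(f)\,U$ and the vertical component $\mathcal{V}[X_i,X_j]=-\omega(X_i,X_j)\,\partial_t$.

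Running Koszul against these brackets (using that $g(X_i,X_j)$ is $t$-independent by $S^1$-invariance) produces the point-$p$ connection identities
\begin{align*}
g(\nabla_{X_i}Y,U)=-\tfrac{f}{2}\omega(X_i,Y),\qquad g(\nabla_{X_i}U,Y)=\tfrac{f}{2}\omega(X_i,Y),\qquad g(\nabla_UU,X_i)=-\tfrac{X_i(f)}{f},
\end{align*}
with the horizontal-horizontal-horizontal part reducing to $\nabla^B$ at $p$. In terms of O'Neill's integrability tensor this reads $A_{X_i}U=\tfrac{f}{2}\sum_j\omega(X_i,X_j)X_j$ and $A_{X_i}X_j=-\tfrac{f}{2}\omega(X_i,X_j)\,U$, while the extra drift $\nabla_UU=-\nabla f/f$ encodes the failure of the fibers to be totally geodesic when $f$ is nonconstant. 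Tracing $\Ric(V,W)=\sum_i g(R(X_i,V)W,X_i)+g(R(U,V)W,U)$ via $R(E_1,E_2)E_3=\nabla_{E_1}\nabla_{E_2}E_3-\nabla_{E_2}\nabla_{E_1}E_3-\nabla_{[E_1,E_2]}E_3$ then decomposes each Ricci coefficient into ``$\omega$-pieces'' and ``$\nabla f$-pieces''. The $\omega$-pieces reproduce exactly the O'Neill contributions of Corollary \ref{cor:canonicalvariation} applied to $\lambda=f(p)$: the $(AU,AU)=\tfrac{f^2}{4}|\omega|^2$ term in $\Ric(U,U)$, the $-2(A_X,A_X)=-\tfrac{f^2}{2}|\omega[X]|^2$ term in $\Ric(X,X)$, and the $-\tfrac{f}{2}(\text{div}_B\omega)(X)$ portion of the off-diagonal term. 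The $\nabla f$-pieces produce $-\Delta f/f$ in $\Ric(U,U)$ (from $\sum_i g(\nabla_{X_i}\nabla_UU,X_i)=-\sum_i X_i(X_i(f))/f$ at $p$), the standard warped-product correction $-\nabla^2 f(X,X)/f$ in $\Ric(X,X)$ (coming from $R(X,U)U$), and the remaining $\tfrac{3}{2}\omega[X,\nabla f]$ in the mixed term (from cross-contractions between $\nabla_{X_i}(\tfrac{f}{2}\omega(\cdot,X_i))$ and the $X_i(f)$-drift).

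\textbf{Main obstacle.} The subtlety is that nonconstant $f$ makes the fibers non-totally-geodesic, so the result is not an immediate corollary of Proposition \ref{prop:Rictotgeo} or Corollary \ref{cor:canonicalvariation}; one must carefully separate and recombine the $\omega$- and $\nabla f$-contributions, especially in $\Ric(U,X)$ where both couplings appear and partially cancel to produce the particular coefficients $-\tfrac12 f\,\text{div}_B\omega$ and $+\tfrac32\omega[X,\nabla f]$. A useful way to organize the bookkeeping is to view the metric as a vertical conformal deformation of the constant-fiber-length reference $g_\lambda=\pi^*g_B+\lambda^2\eta\otimes\eta$ (whose fibers \emph{are} totally geodesic by Vilms), freeze $\lambda=f(p)$, apply Corollary \ref{cor:canonicalvariation} to pick up the $\omega$-terms, and then push the variation $\lambda\mapsto f(x)$ through the Ricci tensor via the standard warping identities to produce the $\Delta f/f$ and $\nabla^2 f/f$ corrections; this is equivalent to the direct computation above but makes the origin of each summand in \eqref{eq:RicS^1warped} transparent.
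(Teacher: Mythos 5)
The paper does not actually prove Proposition~\ref{prop:S1bundlewarped}; it is imported verbatim from \cite[Lemma~1.3]{GilkeyParkTuschmann}, and the text immediately preceding the statement in the paper makes this explicit. So your computation is, by necessity, a different route: you reconstruct the result from scratch rather than outsourcing it. Your plan is sound. The bracket identities $[X_i,\partial_t]=0$ and $\eta([X_i,X_j])=-\omega(X_i,X_j)$ are correct, the Koszul outputs $g(\nabla_{X_i}Y,U)=-\tfrac{f}{2}\omega(X_i,Y)$, $g(\nabla_{X_i}U,Y)=\tfrac{f}{2}\omega(X_i,Y)$, $\nabla_UU=-\nabla f/f$ check out, and you are right that the proposition is \emph{not} a direct consequence of Proposition~\ref{prop:Rictotgeo} or Corollary~\ref{cor:canonicalvariation} because those assume totally geodesic fibers (which forces $f$ constant by the Vilms criterion the paper records). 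The organizing device you propose at the end --- freeze $g_\lambda=\pi^*g_B+\lambda^2\,\eta\otimes\eta$ at $\lambda=f(p)$ to read off the $\omega$-pieces from the totally geodesic case, then superpose the warped-fiber correction --- is a clean way to make the bookkeeping transparent, and it is the kind of argument one finds in Gilkey--Park--Tuschmann and related bundle constructions. What the paper's approach buys is economy; what yours buys is self-containment and a structural explanation of each summand.

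One caveat worth flagging: your writeup is a plan, not a completed computation, and the genuinely delicate step is exactly the one you identify, the mixed term $\Ric(U,X)$. There the full curvature trace produces several terms of the form $X_i(f)\,\omega(X,X_i)$ (from differentiating $A$-type coefficients, from $\nabla_{[X_i,U]}X$, and from the $R(U,X)U$ contraction via $\nabla_X(\nabla f/f)$); these have to be collected with care to yield the specific coefficient $\tfrac{3}{2}\,\omega[X,\nabla f]$ rather than, say, $\tfrac{1}{2}$ or $\tfrac{5}{2}$. Similarly, the identification of $\left(\text{div}_B A[X],U\right)$ with $-\tfrac{f}{2}(\text{div}_B\omega)(X)$ requires tracking a sign and an $f$-factor through the definition of $A$. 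None of this is an obstacle in principle, but as written your proof does not yet certify the numerical coefficients, which is the only nontrivial content of the proposition; a reader wanting to verify it would still have to grind through the trace.
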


\vspace{.3cm}

\subsection{Doubly Warped Products}

A particularly common ansatz is that of the doubly warped factor.  Let $(N^{n_1}_1,h_1)$ and $(N^{n_2}_2,h_2)$ be Riemannian manifolds.  Consider the space $M\equiv (r_-,r_+)\times N_1\times N_2\subseteq \dR^+\times N_1\times N_2$ with the metric
\begin{align}\label{e:doubly_warped_metric}
	g\equiv dr^2+f_1(r)^2g_{N_1}+f_2(r)^2g_{N_2}\, .
\end{align}

Let $X_1\in TN_1$ and $X_2\in TN_2$ represent unit directions on $N_1$ and $N_2$ respectively.  Then the nonzero Ricci curvatures on $M$ are given by
\begin{align}
	\Ric(\partial_r,\partial_r) &= -n_1\frac{f_1''}{f_1}-n_2\frac{f_2''}{f_2}\notag\\
	\Ric(X_1,X_1) &= \Ric_{N_1}(X_1,X_1)-\frac{f_1''}{f_1}+\Big(\frac{f'_1}{f_1}\Big)^2-\frac{f'_1}{f_1}\Big(n_1\frac{f'_1}{f_1}+n_2\frac{f'_2}{f_2}\Big)\, ,\notag\\
	\Ric(X_2,X_2) &= \Ric_{N_2}(X_2,X_2)-\frac{f_2''}{f_2}+\Big(\frac{f'_2}{f_2}\Big)^2-\frac{f'_2}{f_2}\Big(n_1\frac{f'_1}{f_1}+n_2\frac{f'_2}{f_2}\Big)\, ,\notag\\
\end{align}
see for instance \cite{Petersen}, dealing with the case where $(N^{n_1}_1,h_1)$ and $(N^{n_2}_2,h_2)$ are isometric to standard spheres.

\vspace{.5cm}

\section{Step 1: The Base Model}

In this Section we complete the construction of the base model $\cB(\epsilon,\delta)$.  It is a straightforward construction, however as we have discussed previously the base models play the role of the $S^3\times \dR^4$ vertices in the topological construction.  As such they are the starting point for each new inductive step and it seems worthwhile to record their geometric properties.  Our main goal is to prove Proposition \ref{p:step1}, which we restate below for the ease of readability:

\begin{proposition}[Step 1: The Model Space]\label{p:step1:2}
	For each $\delta>0$ and $1>\epsilon>0$ , there exists a smooth manifold $\cB^7=\cB(\epsilon,\delta)$ such that the following hold:
\begin{enumerate}
	\item $(\cB^7,g_B,p)$ is a complete Riemannian manifold with $\Ric\geq 0$, it is diffeomorphic to $S^3\times \dR^4$.
	\item There exists $B_{10^{-3}}(p)\subseteq U\subseteq B_{10^{-1}}(p)$ such that $\cB\setminus U$ is isometric to $S^3_{\delta}\times A_{10^{-2},\infty}(0)\subseteq S^3_{\delta}\times C(S^3_{1-\epsilon})$ 
	\item There is an isometric $T^2=S^1\times S^1$ action on $\cB$ for which on $\cB\setminus U\approx S^3_{\delta}\times C(S^3_{1-\epsilon})$ the first $S^1$ acts on the $S^3_{\delta}$ factor by a globally free (left) Hopf rotation and the second $S^1$ acts on the cross sections $S^3_{1-\epsilon}$ of the cone factor by (left) Hopf rotation. 
	\item The $S^1$-action induced by the homomorphic embedding $S^1\ni \theta\mapsto (a\theta,b\theta)\in T^2$ is free whenever $(a,b)\in \dZ\times \dZ$ are coprime and $a\neq 0$.
\end{enumerate}
\end{proposition}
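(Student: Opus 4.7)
The plan is to realize $\cB(\epsilon, \delta)$ as a doubly warped product on $S^3 \times \dR^4$. Writing $\dR^4 = C(S^3)$ in polar coordinates $(r, z)$, the metric takes the form
\[
g_B \equiv \delta^2\, g_{S^3}^{(1)} + dr^2 + f(r)^2\, g_{S^3}^{(2)},
\]
where $g_{S^3}^{(i)}$ denotes the round unit metric on the $i$-th $S^3$ factor and $f \colon [0,\infty) \to [0,\infty)$ is a profile function to be designed. The basepoint $p$ will be any point on the central sphere $S^3_\delta \times \{0\}$.

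The profile $f$ will satisfy: (i) $f$ is smooth with $f(0) = 0$, $f'(0) = 1$, and every even derivative of $f$ vanishing at $0$, so that $g_B$ extends smoothly across the cone tip to a genuine metric on $\dR^4$; (ii) $f(r) = (1-\epsilon)(r - r_*)$ for every $r \geq r_0$, with $r_0 \in [(1-\epsilon)\cdot 10^{-2}, 10^{-2}]$ and $r_* = r_0 - 10^{-2} \leq 0$, so that the substitution $s = r - r_*$ identifies the exterior $\{r \geq r_0\}$ isometrically with $S^3_\delta \times A_{10^{-2},\infty}(0) \subset S^3_\delta \times C(S^3_{1-\epsilon})$; (iii) $f'' \leq 0$ and $0 < f' \leq 1$ on $(0, \infty)$. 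Existence of such an $f$ reduces to an averaging compatibility: with $f$ concave and $f'$ decreasing from $1$ at $r = 0$ to $1-\epsilon$ at $r = r_0$, the value $f(r_0) = \int_0^{r_0} f'$ lies in $[(1-\epsilon)r_0, r_0]$, which is exactly the range making the matching constraint $f(r_0) = (1-\epsilon)\cdot 10^{-2}$ achievable. Standard mollification then produces a concrete $f$ with the required Taylor behavior at the origin. This simultaneous fitting of smoothness at $r = 0$, the exact cone form at $r \geq r_0$, and concavity for Ricci nonnegativity is the main technical step.

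Ricci nonnegativity follows from the doubly warped product formulas recalled in Section 4. With $f_1 \equiv \delta$ constant and $f_2 = f$, the nonzero components reduce to
\[
\Ric(\partial_r, \partial_r) = -3\frac{f''}{f}, \qquad \Ric(X_1, X_1) = \frac{2}{\delta^2}, \qquad \Ric(X_2, X_2) = \frac{2(1-(f')^2)}{f^2} - \frac{f''}{f},
\]
for unit tangents $X_1$ to $S^3_\delta$ and $X_2$ to the cone cross-section, all nonnegative by (iii). The potentially singular expressions at $r = 0$ have finite limits enforced by the Taylor expansion of $f$. This yields property (1); property (2) follows by taking $U = S^3_\delta \times \{r < r_0\}$ and applying the isometric identification of the exterior with the cone annulus.

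Finally, the $T^2$ action is defined by letting the first $S^1$ Hopf-rotate the $S^3_\delta$ factor (trivial on $\dR^4$) and the second $S^1$ Hopf-rotate the $S^3$ cross-section of $\dR^4 = C(S^3)$ (trivial on $S^3_\delta$); the second action extends smoothly across the cone tip as the standard rotational $S^1$-action on $\dR^4$. Both are isometries since $f$ depends only on $r$ and the Hopf action preserves the round metric. For the induced $(a,b)$-subcircle with $\gcd(a,b) = 1$ and $a \neq 0$, the stabilizer of a point $(y, x)$ with $x \neq 0$ is $\{\theta : a\theta \in 2\pi\dZ\} \cap \{\theta : b\theta \in 2\pi\dZ\}$, which is trivial by coprimality; at central sphere points the analysis reduces to that of the first $S^1$-action on $S^3_\delta$, where Hopf freeness yields the claim.
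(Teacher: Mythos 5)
Your construction is essentially identical to the paper's: the same doubly warped ansatz $dr^2 + \delta^2 g_{S^3} + f(r)^2 g_{S^3}$ on $S^3_\delta\times\dR^4$, with a concave profile $f$ that is linear of slope $1$ near the origin and linear of slope $1-\epsilon$ past a small radius, verified against the same doubly warped Ricci formulas (the paper obtains $f$ by smoothing $\min\{r,\,10^{-4}+(1-\epsilon)(r-10^{-4})\}$ rather than by the averaging argument you sketch, but this is cosmetic). You are more explicit than the paper about smoothness across the cone tip and about item (4), which the paper's proof does not address at all.

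One caveat on your freeness argument for (4): on the central sphere $S^3_\delta\times\{0\}$ the induced $(a,b)$-subcircle does \emph{not} reduce to the Hopf $S^1$-action but to the $a$-fold Hopf rotation $\theta\mapsto a\theta\cdot y$, whose stabilizer is $\dZ_{|a|}$; so ``Hopf freeness'' gives freeness there only when $|a|=1$, not for general coprime $a\neq 0$. This defect is in the proposition's statement itself, not just in your write-up (for $|a|\geq 2$ the action genuinely has $\dZ_{|a|}$ isotropy on the central sphere), and it is harmless for the rest of the paper, which only ever invokes the $(1,k)$-actions.
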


To build $\cB(\epsilon,\delta)$ let us start with the geometry of $S^{3}_\delta\times \dR^{4}$.  Note that if we view $\dR^{4}=C(S^{3}_1)$ then we can write this metric as
\begin{align}
	g_0 \equiv dr^2 +\delta^2 g_{S^{3}}+ r^2 g_{S^{3}}\, .
\end{align}
We will warp this metric by considering the ansatz
\begin{align}
	g \equiv dr^2 +\delta^2 g_{S^{3}}+ h(r)^2 g_{S^{3}}\, .
\end{align}

If we let $a,b,c$  denote the directions on the first $S^{3}$ factor and $i,j,k$ on the second $S^{3}$ factor, then we can compute the nonzero terms of the Ricci curvature of this ansatz as

\begin{align}\label{e:step1:Ricci}
	&\Ric_{rr} = -3\frac{h''}{h}\, ,\notag\\
	&\Ric_{aa} = \frac{2}{\delta^2}\, ,\notag\\
	& \Ric_{ii} = 2\frac{1-(h')^2}{h^2}-\frac{h''}{h}\, .
\end{align}

Let us now consider any smooth function $h(r)$ of the following form:

\begin{align}
	h(r) \equiv \begin{cases}
		r & \text{ if } r\leq 10^{-5}\, ,\notag\\
		h''<0\, , & \text{ if } 10^{-5}\leq r\leq 10^{-3}\, ,\notag\\
		10^{-4}+(1-\epsilon)(r - 10^{-4}) & \text{ if } r\geq 10^{-3}\, .\notag\\
	\end{cases}
\end{align}
We can build a function as above by smoothing out $h(r)=\min\{r,10^{-4}+(1-\epsilon)(r - 10^{-4})\}$.  Note these two linear functions intersect at $r=10^{-4}$.  Observe that any such function satisfies $|h'|\leq 1$ on $(0,+\infty)$ as $h''\leq 0$, and so we can compute
\begin{align}\label{e:step1:Ricci_compute}
	&\Ric_{rr} \geq 0\, ,\notag\\
	& \Ric_{aa} = \frac{2}{\delta^2}>0\, ,\notag\\
	&\Ric_{ii} \geq 0\, .
\end{align}

In particular, this metric has nonnegative Ricci curvature.  The $S^1\times S^1$ torus Hopf action on $S^{3}\times S^3$ induces an isometric action with respect to $g$ from our warping coordinates.  On the domain $\{r\geq 10^{-3}\}$ we can write this metric
\begin{align}
	dr^2 +\delta^2g_{S^3}+\big(10^{-4}+ (1-\epsilon)(r-10^{-4})\big)^2 g_{S^3}\, .
\end{align}

If we consider the domain $U=\{r\leq 10^{-2}-10^{-4}\}$ then we see that $\cB\setminus U$ is isometric to the annulus $S^3_\delta\times A_{10^{-2},\infty}\subseteq S^3_\delta\times C(S^3_{1-\epsilon})$, as claimed.  $\qed$

\vspace{.5cm}

\section{Equivariant Mapping Class Group and Ricci Curvature}\label{s:equivariant_Ricci}

Recall that we have equipped $S^3\times S^3$ with the left $(a,b)$-Hopf action given by 
\begin{equation}
	\theta\cdot_{(a,b)}(s_1, s_2)  = (a\theta \cdot s_1, b\theta\cdot s_2)\, ,\quad \theta \in S^1\, ,
\end{equation}
where $\theta\cdot s$ is the classical (left) Hopf action on $S^3$.  We will consider in this Section $S^3\times S^3$ with the standard metric $g_{S^3\times S^3}$ and the distinct $(1,k)$ and $(1,0)$ actions.  Our goal is to connect these two spaces by a family of smooth $S^1$-invariant Riemannian manifolds $(S^3\times S^3,g_t)$ with positive Ricci curvature which begin and end with the standard geometry but with these distinct actions.  The subtle point is that while $g_1$ is isometric to $g_0$, necessarily it cannot be equal to the standard metric as a tensor and we will have $g_1=\phi^*g_{S^3\times S^3}$ for some mapping class nontrivial diffeomorphism $\phi$.  The precise statement is the following, which is the main goal of this Section:\\

\begin{theorem}[Equivariant Mapping Class and Ricci]\label{t:equivariant_mapping_class:2}
Let $g_0=g_{S^3\times S^3}$ be the standard metric on $S^3\times S^3$, and let $k\in \dZ$.  Then there exist a diffeomorphism $\phi:S^3\times S^3\to S^3\times S^3$ and a family of metrics $(S^3\times S^3,g_t)$ so that
\begin{enumerate}
	\item $\Ric_t>0$ for all $t\in [0,1]$
	\item The $S^1$-action $\cdot_{(1,k)}$ on $S^3\times S^3$ is an isometric action for all $g_t$.
	\item $g_1 = \phi^*g_0$ with $\phi\big(\theta\cdot_{(1,k)}(s_1,s_2)\big)= \theta\cdot_{(1,0)}\phi(s_1,s_2)$ .
\end{enumerate}
\end{theorem}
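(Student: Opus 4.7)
The plan is to split the proof into two phases: constructing the diffeomorphism $\phi$, and then constructing the path $g_t$ through $(1,k)$-invariant metrics of positive Ricci curvature.

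For $\phi$: property $(3)$ forces $\phi$ to conjugate the $(1,k)$-action to the $(1,0)$-action inside $\Diff(S^3\times S^3)$, so it descends to a diffeomorphism $\bar\phi\colon B_k\to B_0$ between the $5$-dimensional quotients $B_\bullet\equiv (S^3\times S^3)/S^1_{(1,\bullet)}$. I would first identify $B_k\cong S^2\times S^3$: the $(1,1)$-invariant map $(s_1,s_2)\mapsto s_1^{-1}s_2$ exhibits $B_1$ as an $S^2$-bundle over $S^3$, which is trivial since $\pi_2(\Diff^+(S^2))=0$, and the analogous identification holds for general $k$ via the Gysin sequence applied to the induced $S^1$-bundle $B_k\to S^2\times S^2$ of Euler class $(k,-1)$. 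The two principal $S^1$-bundles $S^3\times S^3\to B_\bullet$ then carry the same Euler class over this common base and are therefore bundle isomorphic; any such bundle isomorphism supplies $\phi$.

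For the path: view $\pi\colon(S^3\times S^3,g)\to B_k$ as a Riemannian principal $S^1$-bundle with totally geodesic fibres, associated with the fixed $(1,k)$-action. Any $(1,k)$-invariant metric is then encoded by a triple $(\bar g,\eta,\lambda)$ on $B_k$ consisting of a base metric, a principal connection one-form, and the constant squared fibre length. The endpoints $g_0$ and $\phi^*g_0$ correspond to triples $(\bar g^{(k)},\eta^{(k)},\lambda_0)$ and $(\bar\phi^*\bar g^{(0)},\bar\phi^*\eta^{(0)},\lambda_0)$. I would connect them in three substeps: first, shrink the fibre length $\lambda\to s\lambda_0$ with $s\ll 1$ via a Cheeger-type canonical variation, which by Corollary \ref{cor:canonicalvariation} preserves positive Ricci; next, at fixed small $\lambda$, interpolate the base metrics through a path of positive-Ricci metrics on $B_k\cong S^2\times S^3$, available via doubly-warped-product constructions, and interpolate the connections affinely; finally, re-expand the fibres back to $\lambda_0$. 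By Proposition \ref{prop:S1bundlewarped}, in the small-fibre regime the horizontal Ricci is essentially $\Ric_{\bar g_t}$, positive by construction, while the vertical Ricci equals $\tfrac{1}{4}\lambda\,|\omega_t|^2$, positive because the curvature $\omega_t=d\eta_t$ represents a fixed nontrivial Euler class along the affine interpolation of connections.

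The main obstacle is the simultaneous control in the middle substep: maintaining positive horizontal Ricci against the unfavourable $-2\lambda(A_X,A_Y)$ contribution from Corollary \ref{cor:canonicalvariation}, together with control on the mixed Ricci term $(\divergence_B A[X],U)$ of Proposition \ref{prop:Rictotgeo}. This is handled by a scheduling argument: one first obtains a priori bounds on $|\omega_t|$ and on the lower Ricci bound of $\bar g_t$ along the chosen base-metric interpolation, and then picks $s$ small enough in terms of these bounds to absorb all unfavourable terms. A secondary point is the existence of a path of positive-Ricci metrics on $S^2\times S^3$ connecting the two induced base metrics, which follows from standard connectedness results in the space of warped-product metrics, but requires care to realise both endpoints inside a single such family.
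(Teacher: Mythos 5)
Your overall division of labour --- construct $\phi$ abstractly as an $S^1$-bundle isomorphism, then produce the path by moving the triple (base metric, connection, fibre size) of the Riemannian $S^1$-principal bundle $S^3\times S^3\to B_k$ --- is close in spirit to what the paper does in its Steps 4--5 and its final subsection. The topological identification of $B_k$ is handled differently (you use $S^2$-bundles over $S^3$ and a Gysin argument; the paper observes that $B_k$ is an $S^3$-\emph{principal} bundle over $S^2$, hence trivial since $\pi_1(S^3)=0$), but both are valid. The real problems are in the analytic core of your middle step, and there are two genuine gaps.

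First, the base metric $\bar g^{(k)}$ on $B_k$ induced from $g_{S^3\times S^3}$ by the $(1,k)$-quotient is \emph{not} a warped-product or doubly-warped-product metric on $S^2\times S^3$; it is a Riemannian submersion metric onto $S^2_{1/2}$ whose $S^3$-fibres carry a twisted right-invariant (Berger-type) metric and a nonflat principal connection. The assertion that both endpoints sit inside a single family of warped-product metrics on $S^2\times S^3$, so that ``standard connectedness results'' finish the job, is exactly the nontrivial content that needs an argument: this is where the paper deploys the further Riemannian $S^3$-principal bundle structure $B_k\to S^2_{1/2}$ and interpolates in three phases (round off the fibre metric, flatten the $S^3$-connection, re-expand the now-round fibres). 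Without that intermediate fibration you have no mechanism to join $\bar g^{(k)}$ to the product metric while keeping $\Ric>0$.

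Second, the affine interpolation of $S^1$-connections combined with ``take the fibre small'' does not control the mixed Ricci term. With constant fibre length $f$, Proposition~\ref{prop:S1bundlewarped} gives $\Ric(U,U)=\tfrac{f^2}{4}|\omega|^2$, $\Ric(X,X)=\Ric_B+O(f^2)$, and $\Ric(U,X)=-\tfrac{f}{2}(\divergence_B\omega)(X)$. The positivity of the $\{U,X\}$ block reduces to $\tfrac{f^2}{4}|\omega|^2(\Ric_B+O(f^2)) > \tfrac{f^2}{4}|(\divergence_B\omega)(X)|^2$, and the factors of $f^2$ cancel: no choice of small $f$ absorbs a nonvanishing $\divergence_B\omega$. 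The paper avoids this entirely by choosing $\omega_t$ to be the Hodge-harmonic representative (Coulomb/Yang--Mills gauge) at every $t$, which kills the mixed term identically; an affine interpolation generically does not preserve this. Relatedly, you never address the possibility that $|\omega_t|^2$ vanishes somewhere, which would make the vertical Ricci $\tfrac{f^2}{4}|\omega_t|^2$ degenerate to zero rather than strictly positive; the paper's Step 5 replaces the constant fibre-length by $f+\epsilon h$ with $h$ solving an auxiliary Poisson equation on $S^2$ precisely to fix this.
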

\begin{remark}\label{r:equivariant_mapping_diffeo}
	We will show in Section \ref{ss:equivariant_metric:diffeo} that in the case $k=1$ we can pick $\phi=\phi_1$ to be the diffeomorphism specified in the Examples of Section \ref{ss:equivariant_metric:examples}.
\end{remark}
\begin{remark}
	The above Theorem is for the $(1,k)$-actions induced by the left Hopf actions.  Of course, one can equally well deal with the right actions or mapping $(k,1)$-actions to $(0,1)$-actions.
\end{remark}

The proof will be divided into various steps.  In Section \ref{ss:equivariant_metric:geometry_N} we will begin by studying the geometry of $N\equiv S^1\backslash S^3\times S^3$, that is the quotient of $S^3\times S^3$ by the $(1,k)$-Hopf action.  We will see that we can view $N$ itself as a Riemannian $S^3$-principal bundle over $S^2$, so that we have the viewpoint
\begin{align}
	S^3\times S^3 \stackrel{S^1}{\longrightarrow} N \stackrel{S^3}{\longrightarrow} S^2\, .
\end{align}
Each of these bundle structures are Riemannian submersions, however the connections will be nonstandard and the $S^3$ fibers will not have the standard geometry.  In particular, $N \stackrel{S^3}{\longrightarrow} S^2$ is an $S^3$ bundle over $S^2$ with twisted connection and totally geodesic fibers. We notice that the induced metric on the fibers will be right invariant but not biinvariant.\\

Our construction will now proceed as follows.  In the first step of the construction in Section \ref{ss:equivariant_metric:step1} we will construct a family $(N_t,h_t)=S^1\backslash(S^3\times S^3,g_t)$ with positive Ricci which begins at $N_0\equiv N$ as above, and ends at $N_{1/2}$.  Each $N_t$ will itself be the total space of a Riemannian $S^3$-principal bundle over $S^2$, where for $t\in [0,1/2]$ the connection will be fixed but the geometry of the $S^3$ fibers will round off so that $N_{1/2}$ will become an $S^3$ bundle over $S^2$ with small but round $S^3$ fibers.  This construction will essentially take place directly on $(S^3\times S^3,g_t)$, however it will be crucial in the remaining steps that we emphasize the geometry of $N_t$ during this process.\\

The second and third steps of the construction will focus on just changing the geometry of $(N_t,h_t)$.  For each $t\in [1/2,3/4]$ we will have in Section \ref{ss:equivariant_metric:step2} that $N_t$ is a Riemannian $S^3$-principal bundle with totally geodesic fibers which are isometric to spheres.  The connection of this bundle on $N_{1/2}$ is highly nontrivial, and in the second step we will vary this connection until we arrive at a flat connection on $N_{3/4}$.  The sphere fibers may shrink during this process in order to preserve the positive Ricci condition.  It is worth emphasizing that although the connection at the end is flat, it will look highly nontrivial in the original coordinates of $N=S^1\backslash S^3\times S^3$.  In the third step of Section \ref{ss:equivariant_metric:step3}, where we now have a flat connection with small round sphere fibers, we will increase the size of the sphere fiber until we arrive at $N_1=S^2\times S^3$ isometrically.\\

The last steps of the construction will view $S^3\times S^3$ as the total space of a Riemannian $S^1$-principal bundle over $N_t$.  This $S^1$ bundle structure will generate our family of actions, and in particular by definition will begin with the $(1,k)$ action on $S^3\times S^3$.  A metric $g_t$ on this total space is then well defined by an $S^1$-principal connection $\eta_t$ and fiber size functions $f_t:N_t\to \dR$, see Section \ref{ss:prelim:submersions}.  We will choose $\eta_t$ to be a Yang-Mills connection under the Coulumb gauge, which will allow us to show $\Ric_t\geq 0$.  In Step 5 of Section \ref{ss:equivariant_metric:step5} we will further vary the fiber size $f_t$ of the circle bundle so that we can push the Ricci curvature up to become strictly positive.  Recall the construction of $N_t$ ends with $N_1=S^2\times S^3$ isometrically.  Thus we will see that the Yang-mills connection must give us a total space which is isometric to $S^3\times S^3$ with the bundle action being the $(1,0)$ action.  In Section \ref{ss:equivariant_metric:finish_proof} we will put all of these ingredients together in order to complete the proof of Theorem \ref{t:equivariant_mapping_class:2}.

\vspace{.3cm}

\subsection{Examples: Equivariant Mappings $\phi_k:S^3\times S^3\to S^3\times S^3$}\label{ss:equivariant_metric:examples}

In this subsection let us first present some explicit examples of equivariant mappings which satisfy:

\begin{align}
	\phi_k\big(\theta\cdot_{(1,k)}(s_1,s_2)\big) = \theta\cdot_{(1,0)}\phi_k(s_1,s_2)\, .
\end{align}

We will see in Theorem \ref{t:equivariant_mapping_class:2} that we will be able to take these explicit mappings as our end diffeomorphisms.\\

\subsubsection{The Case $k=1$}\label{sss:equivariant_metric:examples:k=1}  Let us treat $k=1$ and $k>1$ separately.  In the case of $k=1$ we will view each $S^3$ as the corresponding Lie Group $\text{SU}(2)$ and so write $(s_1,s_2)\in S^3\times S^3$.  Let us explicitly define the mapping
\begin{align}
	\phi_1(s_1,s_2)=(s_1,s_1^{-1}s_2)\, .
\end{align}

It follows that the left $(1,1)$-action pushes forward to the left $(1,0)$ action, that is $\phi_1\big(\theta\cdot_{(1,1)}(s_1,s_2)\big) = \theta\cdot_{(1,0)}\phi_1(s_1,s_2)$ .\\

\subsubsection{The Case $k>1$}

The case $k>1$ is a little trickier. 
Let $u,z\in S^3$. We write $u=(u_1,u_2)$, $z=(z_1,z_2)$, where $u_1,u_2,z_1,z_2\in \mathbb C$. 
The diffeomorphism is given by
\begin{align}
	\phi_k\big(u_1,u_2,z_1,z_2\big) &\equiv \Big(u_1,u_2, \frac{1}{\sqrt{|u_1|^{2k} + |u_2|^{2k}}}(\bar u_1^k, -u_2^k)\cdot (z_1,z_2)\Big)\notag\\
	&=\Big(u_1,u_2, \frac{1}{\sqrt{|u_1|^{2k} + |u_2|^{2k}}}(\bar u_1^kz_1+u_2^k\bar z_2,-u_2^k\bar z_1+\bar u_1^k z_2)\Big)\, .
\end{align}
where $\cdot$ denotes the product of $S^3$ as Lie-group.

With this choice, we have the claimed equivariance property that
\begin{align}
	\phi_k\big(\theta\cdot_{(1,k)}(u_1,u_2,z_1,z_2)\big) = \theta\cdot_{(1,0)}\phi_k(u_1,u_2,z_1,z_2)\, .
\end{align}
Moreover, $\phi_k$ is equivariant with respect to the $S^3$-action on the second $S^3$-factor, i.e.
\begin{equation}
	\phi_k(u, z \cdot g) = \phi_k(u,z)\cdot g
	\, , \quad
	\text{for every $g\in S^3$}\, .
\end{equation}

Note this corresponds to the previous construction when $k=1$.

\vspace{.3cm}

\subsection{The Geometry of $N\equiv S^1\backslash S^3\times S^3$}\label{ss:equivariant_metric:geometry_N}

Let us begin with $S^3\times S^3$ endowed with the product Lie group structure.  Let $U_1,U_2,U_3$ be an orthonormal basis of right invariant vector fields on the first $S^3$ factor, and similarly let $V_1,V_2,V_3$ be an orthonormal basis of right invariant vector fields on the second $S^3$ factor.  Let $U_1$ and $V_1$ be the right invariant vector fields induced from the left Hopf actions.  We will write $U_j^*$ and $V_j^*$ to denote the dual basis of one forms.\\

Let us now define $(N,h)\equiv S^1\backslash S^3\times S^3$ to be the isometric quotient of $S^3\times S^3$ by the left $(1,k)$-Hopf action, which is a free and isometric action.  We have that $\pi_{(1,k)} : S^3 \times S^3 \to N$ is a principal $S^1$-bundle, and it is endowed with the Yang-Mills principal connection
\begin{equation}
  \eta_0 := \frac{1}{1 +k^2} \Big(U_1^* + k V_1^* \Big)\, .
\end{equation}

\medskip

We shall denote the projection $\pi_{(1,k)}(s_1,s_2)\in N$ as $[s_1,s_2]$, where $(s_1,s_2)\in S^3\times S^3$.  Note that the right $S^3$ action on the second factor of $S^3\times S^3$ commutes with the left $(1,k)$ action.  The quotient action on $N$:
\begin{equation}\label{eq:rightS^3}
	[s_1,s_2]\cdot s = [s_1, s_2 \cdot s]\, , \quad s\in S^3\, , \, \, [s_1, s_2]\in N\, ,
\end{equation}
is a free and isometric action, and thus $\pi: N\stackrel{S^3}{\longrightarrow}S^2$ admits a structure of principal $S^3$-bundle over $S^2$.  Note that
\begin{equation}
	\pi ([s_1,s_2]) = \pi_{\rm Hopf}(s_1)\, ,
\end{equation}
is the same as the Hopf projection on the first factor.

\begin{remark}
We can understand $N$ in the following manner.  Begin with $S^3\stackrel{S^1}{\longrightarrow} S^2$, viewed as an $S^1$-principal bundle over $S^2$ with respect to the {\it left} Hopf action.  Consider the homomorphism $\rho: S^1 \to S^3$, $\rho(z) = z^{-k}$ where $z\in S^1$ is identified with a complex number and $z^{-k}\in S^3$ with a unit quaternion.  Then we can identify $N$ as the associated $S^3$ bundle over $S^2$ under this representation.  This point of view is particularly convenient for writing coordinate expressions of the above. \\
\end{remark}

Our first claim is about the isomorphism class of the principal $S^3$-bundle $\pi:N\to S^2$.

\begin{lemma}\label{lemma:Xtrivialbundle}
	The principal $S^3$-bundle $\pi: N \to S^2$ is isomorphic to the trivial bundle $S^2\times S^3$.
\end{lemma}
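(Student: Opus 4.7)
The plan is to produce an explicit trivialization using the diffeomorphism $\phi_k:S^3\times S^3\to S^3\times S^3$ constructed in Section \ref{ss:equivariant_metric:examples}. As a sanity check, principal $S^3$-bundles over $S^2$ are classified by $[S^2,BS^3]=\pi_2(BS^3)=\pi_1(S^3)=0$, so any such bundle is abstractly trivial; however for later purposes it is useful to write down a concrete isomorphism, and the map $\phi_k$ is already at hand.

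First I would observe that the $(1,0)$-Hopf action on $S^3\times S^3$ is just the left Hopf action on the first factor, so the quotient $S^1\backslash_{(1,0)}(S^3\times S^3)$ is canonically $S^2\times S^3$, with the Hopf projection on the first factor and the identity on the second. Equipped with right multiplication on the second $S^3$ factor, this is tautologically the trivial $S^3$-principal bundle over $S^2$.

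The two equivariance properties of $\phi_k$ recorded in Section \ref{ss:equivariant_metric:examples} read
\begin{align}
\phi_k\big(\theta\cdot_{(1,k)}(s_1,s_2)\big) &= \theta\cdot_{(1,0)}\phi_k(s_1,s_2)\, ,\\
\phi_k\big((s_1,s_2)\cdot g\big) &= \phi_k(s_1,s_2)\cdot g\, ,\quad g\in S^3\, .
\end{align}
The first identity says that $\phi_k$ maps $(1,k)$-orbits to $(1,0)$-orbits, so it descends to a diffeomorphism $\bar\phi_k:N\to S^2\times S^3$. The second identity says that $\bar\phi_k$ intertwines the two right $S^3$-actions. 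Because $\phi_k$ is the identity on the first $S^3$-factor, $\bar\phi_k$ covers the identity on the base $S^2$ under the Hopf projection. Thus $\bar\phi_k$ is an isomorphism of principal $S^3$-bundles $N\to S^2\times S^3$, proving the claim.

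There is no real obstacle here: the argument is pure bookkeeping once the two equivariance properties of $\phi_k$ are verified, and each of these reduces to a direct computation using the explicit formula of Section \ref{ss:equivariant_metric:examples} (viewing $S^3$ as the unit quaternions, the key point is that $(\bar u_1^k,-u_2^k)/\sqrt{|u_1|^{2k}+|u_2|^{2k}}$ is a unit quaternion depending only on $u$, which absorbs the $e^{ik\theta}$-factor under the $(1,k)$-action while commuting with right multiplication on the $z$-factor).
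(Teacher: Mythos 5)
Your proposal is correct. The paper's own proof is exactly your ``sanity check'': it cites the classical fact that principal $G$-bundles over $S^n$ are classified by $\pi_{n-1}(G)$, so $\pi_1(S^3)=0$ forces triviality (this is the same content as $[S^2,BS^3]=\pi_1(S^3)=0$). Your main argument, the explicit trivialization via $\phi_k$, is a genuinely different and more constructive route. It has a concrete payoff: it exhibits the abstract $S^3$-equivariant trivialization $\Phi$ that the paper invokes in Step~2 of Section~\ref{ss:equivariant_metric:step2} without writing it down, and indeed the paper makes exactly this observation later for $k=1$ in Section~\ref{ss:equivariant_metric:diffeo}, taking $\Phi_1([s_1,s_2])=(\pi_{\rm Hopf}(s_1),s_1^{-1}s_2)$. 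Your version handles general $k$ and is a small improvement. The bookkeeping you sketch is right: $\phi_k$ fixes the $u$-factor, so the induced map covers the identity on $S^2$; it intertwines the $(1,k)$- and $(1,0)$-actions, so it descends to $N\to S^2\times S^3$; and it is right-$S^3$-equivariant by associativity of quaternion multiplication, hence a principal bundle isomorphism. The one point worth saying out loud (which you implicitly use) is that $\phi_k$ is globally smooth because $|u_1|^{2k}+|u_2|^{2k}>0$ on $S^3$, as $|u_1|^2+|u_2|^2=1$ forces at least one of $u_1,u_2$ to be nonzero.
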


\begin{proof}
It is a classical fact that the isomorphism classes of principal $G$-bundles over spheres $S^n$ are in bijection with $\pi_{n-1}(G)$, where $G$ is any connected Lie group. See for instance \cite[Chapter 18]{Steenrod}. As $n=2$, and $G=S^3$ is simply connected, we infer that any principal $S^3$-bundle over $S^2$ is isomorphic to the trivial one.
\end{proof}

\begin{remark}
It was already noted in \cite[Example 4.1]{PetruninTuschmann} that $N$ is diffeomorphic to $S^2\times S^3$, with an argument based on the classification of simply connected 5-manifolds, see also \cite{WangZiller}. The principal $S^3$-bundle structure of $N$ plays a key role for our purposes.\\
\end{remark}

\subsubsection{Geometry on $N$}
Our next goal is to understand the metric $h$ on $N$.\\ 

Begin by observing that if we endow the base space $S^2$ with $\frac{1}{4}g_{S^2}$, the round metric of radius $\frac{1}{2}$, then $\pi: (N, h) \to S^2_{1/2}$ is a Riemannian submersion.  To see this let us write $S^2 = N/S^3= S^1\backslash S^3\times S^3/S^3$.  If we quotient by the $S^3$ action first then we have $S^2=S^1\backslash S^3$, where the $(1,k)$ action descends to the $(1,0)$ action on $S^3$ as we have quotiented out the second factor.  Thus the quotient space is the usual Hopf quotient, which is the sphere of radius $\frac{1}{2}$ .\\

Notice that over $(S^3\times S^3, g_{0})$ we have the orthonormal basis
\begin{equation}\label{eq:frame 1k}
	\frac{1}{\sqrt{1 + k^2}}( U_1 + k V_1),\, U_2,\,  U_3,\, \frac{1}{\sqrt{1 + k^2}}(-k U_1 + V_1),\,  V_2,\,  V_3 \,  , 
\end{equation}
where as before $U_1, U_2, U_3$ are the right invariant vector fields in the first $S^3$ factor, and  $V_1, V_2, V_3$ are the right invariant vector fields in the second $S^3$ factor. The first vector field is vertical with respect to the $(1,k)$-projection map $\pi_{(1,k)}: (S^3 \times S^3,g_0) \to (N,h)$, while the last five vector fields are horizontal.\\

The following claims are almost immediate:
\begin{itemize}
	\item[(1)] $d\pi_{(1,k)}[V_2]$, $d\pi_{(1,k)}[V_3]$, $d\pi_{(1,k)}\Big[\frac{1}{\sqrt{1 + k^2}}(-k U_1 + V_1)\Big]$ span the vertical directions in $N$.
	
	\item[(2)]  $d\pi_{(1,k)}[U_2]$,  $d\pi_{(1,k)}[U_3]$ span the horizontal directions in $N$.\\
\end{itemize}
In order to check (1), it is enough to observe that the three vectors span a three-dimensional subspace of the tangent of $N$, and belong to the kernel of $d\pi$ since $\pi\circ \pi_{(1,k)}(g_1,g_2) = \pi_{\rm Hopf}(g_1)$, by definition.

Claim $(2)$ follows immediately since the span of the vectors in (2) is two-dimensional and orthogonal to the span of the vectors in (1). The orthogonality follows from the fact that \eqref{eq:frame 1k} is an orthonormal frame:
$h(d\pi_{(1,k)}[U_2], d\pi_{(1,k)}[V_2]) = g_0(U_2, V_2) = 0$, and similarly for the other vectors.

\vspace{.3cm}

\subsection{Step 1: Squishing the Fibers}\label{ss:equivariant_metric:step1}

By looking at the last three vector fields in \eqref{eq:frame 1k} we see that the metric on the $S^3$ fibers of $N$ is right invariant, but not left invariant.  We do see that it is invariant under the left $S^1$ action, the action of which just rotates the $V_2,V_3$ plane.  Our first step of the construction will be to round off the metric so that the fibers become bi-invariant round spheres.  We will work directly to build a family of metrics $(S^3\times S^3,g_t)$ for $t\in [0,1/2]$ which continue to be invariant by the left $S^1\times S^1$ Hopf action, and invariant by the right $S^3$ action.  In particular, for each such $t$ we can define $(N_t,h_t)=(N,h_t)\equiv S^1\backslash (S^3\times S^3,g_t)$ where we have quotiented out by the $(1,k)$-Hopf action.  As there are many properties of this family that we will need in the sequel, let us summarize the end results of our constructions for this step:\\

\begin{lemma}\label{l:equivariant_mapping:step1}
For $t\in[0,1/2]$ there exists smooth families $(S^3\times S^3,g_t)$, $(N,h_t)\equiv S^1\backslash (S^3\times S^3,g_t)$ which satisfy $\Ric_{h_t},\Ric_{g_t} >0$ and such that:
\begin{itemize}
	\item[(i)] $\pi: (N, h_t) \stackrel{S^3}{\longrightarrow} S^2_{1/2}=(S^2, \frac{1}{4}g_{\rm S^2})$ is a Riemannian $S^3$-principal bundle with totally geodesic fibers.  Further the $S^3$ fibers in $N_{1/2}$ are bi-invariant spheres;
	\item[(ii)]  $\pi_{(1,k)}: (S^3 \times S^3,g_t) \to (N_t, h_t)$ is a Riemannian $S^1$-principal bundle with total geodesic fibers and constant connection $\eta_t=\eta_0$.  Further, the connections $\eta_t$ are Yang-Mills and in Coulomb Gauge.
\end{itemize}
\end{lemma}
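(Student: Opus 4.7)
The plan is to realize the family $(g_t, h_t)$ by deforming only the right-invariant metric on the $S^3$-fibers of $\pi:N\to S^2_{1/2}$ while keeping both principal connections (that is, the horizontal distributions of both Riemannian submersions) fixed. A direct computation with the orthonormal frame \eqref{eq:frame 1k} at $[e,e]$ shows that $d\pi_{(1,k)}[V_1]$ has $h_0$-length $1/\sqrt{1+k^2}$ while $d\pi_{(1,k)}[V_2]$ and $d\pi_{(1,k)}[V_3]$ have unit length, so the initial fiber of $\pi$ is a right-invariant Berger sphere whose Hopf circle has been shrunk by $1/\sqrt{1+k^2}$. I would parametrize the deformation by two smooth positive functions $\lambda(t),\mu(t)$, giving the lengths of the Hopf direction and the orthogonal directions on the fiber, with $(\lambda(0),\mu(0))=(1/\sqrt{1+k^2},1)$ and $\lambda(1/2)=\mu(1/2)=\varepsilon>0$ small to be fixed later. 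The metric $h_t$ is then defined by keeping the $S^2_{1/2}$-direction unchanged and using the new right-invariant metric on the vertical part; $g_t$ on $S^3\times S^3$ is defined so that $\pi_{(1,k)}:(S^3\times S^3,g_t)\to(N,h_t)$ is a Riemannian submersion with connection $\eta_0$ and $S^1$-fibers of constant length $2\pi\sqrt{1+k^2}$. Vilms' criterion then yields totally geodesic fibers for both submersions.

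For positivity of Ricci, I would invoke Propositions \ref{prop:Rictotgeo} and \ref{prop:S1bundlewarped} together with the canonical variation of Corollary \ref{cor:canonicalvariation}. For $(N,h_t)\to S^2_{1/2}$: the fiber Ricci is strictly positive throughout because the Berger aspect ratio $\lambda/\mu$ stays in $[1/\sqrt{1+k^2},1]$, which is well inside the positive-Ricci regime for Berger $3$-spheres; the base Ricci of $S^2_{1/2}$ is positive; and the only negative contribution $-2(A_X,A_X)$ in the horizontal Ricci, where $A$ is the integrability tensor of the fixed $S^3$-connection, is $O(\varepsilon^2)$ small since it is measured using the shrinking fiber metric. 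For $(S^3\times S^3,g_t)\to (N,h_t)$: the vertical Ricci is $\tfrac{1+k^2}{4}|\omega|^2_{h_t}>0$ (no $\Delta f/f$ term, since $f$ is constant), the mixed term $\Ric_{g_t}(U,X)$ vanishes by the right $S^3$-symmetry, and the horizontal Ricci equals $\Ric_{h_t}(X,X)-\tfrac{1+k^2}{2}|\omega[X]|^2_{h_t}$. The main technical obstacle is the simultaneous calibration of the two levels, since shrinking the fiber size reduces $(A_X,A_X)$ but increases $|\omega|^2_{h_t}$; I would handle this by first choosing the interpolation $(\lambda(t),\mu(t))$ so that the fiber Ricci is uniformly bounded below, then taking $\varepsilon$ small enough to make $(A_X,A_X)$ negligible compared to the positive base Ricci, and finally (if needed) rescaling the overall fiber length of the $S^1$-bundle, which is a global rescaling preserving all structures, so as to dominate $|\omega|^2_{h_t}$.

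Finally, the Yang-Mills and Coulomb gauge conditions on $\eta_0$ follow from symmetry. The $1$-form $\eta_0=\tfrac{1}{1+k^2}(U_1^*+kV_1^*)$ is invariant under the left $T^2$ Hopf actions and under the right $S^3$-action on the second factor, hence both $\eta_0$ and $F_0=d\eta_0$ are $S^3$-invariant objects on $N$. Since $h_t$ is right $S^3$-invariant by construction, $d^*_{h_t}F_0$ inherits this invariance and reduces to a constant-coefficient computation on $S^2_{1/2}$; the Maurer--Cartan identity $d\eta_0=-\tfrac{2}{1+k^2}(U_2^*\wedge U_3^*+k\, V_2^*\wedge V_3^*)$ shows that the horizontal part of $F_0$ is a constant multiple of the $S^2_{1/2}$-volume form, whose codifferential vanishes. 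Coulomb gauge $d^*_{g_t}\eta_0=0$ holds because $\eta_0^\sharp$ stays proportional to the Killing vector field $U_1+kV_1$ generating the (isometric) $(1,k)$-action for every $g_t$, hence is divergence-free. These ingredients assembled together yield the family of the lemma.
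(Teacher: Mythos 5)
Your proposal is correct and follows essentially the same route as the paper: deform only the right‑invariant Berger metric on the $S^3$‑fibers of $N\to S^2_{1/2}$ while freezing both horizontal distributions, round off and shrink those fibers to force $\Ric_{h_t}>0$, then shrink the $S^1$‑fiber length to propagate positivity to $(S^3\times S^3,g_t)$, with the Yang--Mills and Coulomb conditions coming from invariance of $\eta_0$ and the explicit structure constants. The paper's parameters $(a_t,b_t)$ are exactly your $(\lambda,\mu)$ via $\lambda=a_t/(1+k^2)$ and $\mu=a_tb_t$.
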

\begin{remark}
We will also take our construction so that $g_t,h_t$ are constant metrics near $t=0,\frac{1}{2}$.
\end{remark}
\begin{remark}
	Recall that we call a principal $G$-bundle $P\stackrel{G}{\longrightarrow} B$ a Riemannian $G$-principal bundle if $P$ is equipped with a metric $g_P$ which is invariant under the right $G$ action.  Recall such a metric defines a principal connection $\xi$ and a family over $B$ of right invariant metrics on $G$.
\end{remark}
\vspace{.1cm}

To define our family of metrics on $S^3\times S^3$ will have as a global orthonormal basis the vector fields
\begin{align}\label{eq:step1 ortho}
	&T^t\equiv \frac{1}{f_t}( U_1 + k V_1)\, ,\;\;X_2 \equiv U_2\, , \;\; X_3 \equiv U_3\, ,\notag\\
	& W_1^t\equiv \frac{1}{a_t}(-k U_1 + V_1)\, ,\;\;   W_2^t\equiv \frac{1}{a_tb_t}V_2\, ,\;\;  W_3^t\equiv \frac{1}{a_tb_t}V_3 \, .
\end{align}

Let us begin with some remarks on this basis.  Observe that $U_1+kV_1$ is the direction associated to the $(1,k)$-action, and so $T$ is the unit direction associated to the $(1,k)$ action.  In particular, we have the $S^1$-fibers have length $2\pi\,f_t$, and in order for the geometry at $t=0$ to be the standard geometry we will choose $f_t=\sqrt{1+k^2}$, $a_t=\sqrt{1+k^2}$ and $b_t=\frac{1}{\sqrt{1+k^2}}$ for $t$ near zero.  

The vector fields $X_2,X_3$ represent the horizontal directions associated to the base $S^2$.  As the geometry is symmetric with respect to the $X_2,X_3$ and $W_2^t,W_3^t$ indices we have that the left $S^1\times S^1$ actions are isometric actions.  As $X_2,X_3$ are time independent we have that $N_t/S^3 = S^2_{1/2}$ remains a sphere of radius $1/2$.

The directions $W^t_1,W^t_2,W^t_3$ represent the directions horizontal with respect to the $S^1$ action, but will induce vertical vector fields on $N$ with respect to the right $S^3$ action.  The orbit of the $(k,1)$ action in the torus generated by $\{T^t,W^t_1\}$ is $\frac{1}{\sqrt{1+k^2}}$ dense, and hence the geometry of the $S^3$ fibers of the Riemannian $S^3$-bundle $N_t\stackrel{S^3}{\longrightarrow} S^2_{1/2}$ can be seen to be determined by the right invariant orthonormal basis $\{\frac{1+k^2}{a_t} V_1, \frac{1}{a_tb_t}V_2,\frac{1}{a_tb_t} V_3\}$.\\

We will therefore choose our warping function $b_t=b(t)$ as any smooth function such that 
\begin{align}
	b(t)=\begin{cases}
		\frac{1}{\sqrt{1+k^2}}&\text{ if } t\text{ is near }0\, ,\notag\\
		\dot b_t\geq 0&\text{ if } t\in[0,1/2]\, ,\notag\\
		\frac{1}{1+k^2}&\text{ if } t\text{ is near }1/2\, .
	\end{cases}
\end{align}

We will choose $a_t$ to be nonincreasing shortly. This will play the role of squishing additional positive curvature into the system to compensate for the movement of $b_t$. \\ 

Let us now study some properties about our ansatz.  We begin by computing the nonzero brackets of our vector fields.  These computations all boil down to using that $U_j,V_j$ are a standard right invariant basis for $S^3$:
\begin{align}\label{e:step2:step1:1}
	&[T^t,X_2] = \frac{2}{f_t}X_3\, ,\;\; [X_3,T^t] = \frac{2}{f_t}X_2\, ,\;\;[X_2,X_3] = \frac{2f_t}{1+k^2}T^t-\frac{2ka_t}{1+k^2}W^t_1\, ,\notag\\
	&[T^t,W_2^t] =\frac{2k}{f_t}X_3\, ,\;\;[W_3^t,T^t] =\frac{2k}{f_t}W^t_2\, ,\notag\\
	&[W_1^t,W_2^t] = \frac{2}{a_t}W^t_3\, ,\;\; [W_3^t,W_1^t] = \frac{2}{a_t}W^t_2\, ,\;\;[W_2^t,W_3^t] = \frac{2kf_t}{(1+k^2)a_t^2b_t^2}T^t+\frac{2}{(1+k^2)a_tb^2_t}W^t_1\, ,\notag\\
	&[W_1^t,X_2] =-\frac{2k}{a_t}X_3\, ,\;\;[X_3,W_1^t] =-\frac{2k}{a_t}X_2\, .
\end{align}

There are several takeaways from the above computations.  In order to have an ease in the formulas, let us be abusive in notation and define $W^t_T = T^{t}$, $W^t_{X_2}=X_2$ and $W^t_{X_3}=X_3$.  Observe that the structural and Christofell coefficients 
\begin{align}
	&c_{ijk}\equiv g_t( [W^t_i,W^t_j],W^t_k)\, ,\notag\\
	&\Gamma_{ijk}\equiv g_t(\nabla_{W^t_i} W^t_j,W^t_k)\, ,
\end{align}
are constants and related by $\Gamma_{ijk} =\frac{1}{2}\big(c_{ijk}+c_{kij}-c_{jki}\big)$.  As a first observation note that to be nonvanishing all three of the indices must be distinct, and in particular we can conclude that
\begin{align}\label{e:step2:step1:2}
	&\nabla_{W^t_j}W^t_j = 0 \;\, ,\notag\\
	&g_t(\nabla_{W^t_k}W^t_j,W^t_k) = 0\, .
\end{align}
To put this into perspective, the first equation will tell us shortly that all the bundles of interest have totally geodesic fibers, and the second equation will tell us that our $S^1$-connections are Yang-Mills.\\

\subsubsection{The $S^1$-Principal Bundle $S^3\times S^3\stackrel{S^1}{\longrightarrow} N_t$}

Thus let us consider now the $S^1$-principal bundle $S^3\times S^3\stackrel{S^1}{\longrightarrow} N$.  It follows from \eqref{e:step2:step1:2} that $\nabla_{T^t}T ^t= 0$, and hence the $S^1$-fibers of this principal bundle are totally geodesic.  It follows from \eqref{eq:step1 ortho} that the connection $1$-form of this bundle is given by the metric dual
\begin{align}
	\eta_t = f_t^{-1}g_t(T^t,\cdot)\, .
\end{align}
Note that as $W^t_j\propto W^0_j$ we have that $\eta_t =\eta_0$ is independent of $t$ as a $1$-form.  We can compute the curvature form $\omega_t=d\eta_t$ of this bundle by
\begin{align}
	\omega_t(W_j^t,W_k^t) = \frac{1}{2f_t}\big(\langle \nabla_{W_j^t}T, W_k^t\rangle -\langle \nabla_{W_k^t}T, W_j^t\rangle\big) = \frac{c_{kjT}}{f_t}\, ,
\end{align}
so that we can use \eqref{e:step2:step1:1} to write
\begin{align}
\omega_t &= -\frac{2}{1+k^2}\big(X^{*}_2\otimes X^{*}_3-X^{*}_3\otimes X^{*}_2\big)-\frac{2k}{(1+k^2)a_t^2 b_t^2}	\big(W^{t,*}_2\otimes W^{t,*}_3-W^{t,*}_3\otimes W^{t,*}_2\big)\, .
\end{align}
Consequently, it follows from \eqref{e:step2:step1:2} that
\begin{align}
&\text{div}_{N_t} \eta_t=0\, ,\notag\\
	&\text{div}_{N_t} \omega_t=0\, ,
\end{align}
and hence $\eta_t$ is a Yang-Mills connection in Coulomb gauge.\\

Let us now compute the Ricci curvature $\Ric_{g_t}$ of $(S^3\times S^3,g_t)$ in terms of the Ricci curvature of $N_t$.  We will compute $\Ric_{N_t}$ shortly after.  Let us use $H\in \text{span}\{X_2,X_3,W^t_j\}$ to denote any unit horizontal vector with respect to the $S^1$ action. Using Proposition \ref{prop:S1bundlewarped} we have that
\begin{align}
	\Ric_{g_t}(T^t,T^t) &= \frac{1}{4}|\omega_t|^2_t\geq c(k)\frac{f_t^2}{a_t^2}\, ,\notag\\
	\Ric_{g_t}(T^t,H) &= -\text{div}_{N_t}\omega_t(H)=0\, ,\notag\\
	\Ric_{g_t}(H,H) &=\Ric_{h_t}(H,H)-\frac{f_t^2}{2}|\omega_t[H]|^2\notag\\
	&\geq \Ric_{h_t}(H,H)-c(k)\frac{f_t^2}{a_t^2}\, ,
\end{align}
where we have used that $b_t$ is bounded above and below by functions of $k$.  Our main takeaway is that if $\Ric_{h_t}>0$ then for $f_t\leq f_t(h_t,a_t)$ we have that the Ricci curvature of $(S^3\times S^3,g_t)$ is itself positive.

\vspace{.25cm}
\subsubsection{The Geometry of $N_t$}

We understand that the right $S^3$ action on $N_t$ is an isometric action which gives $N_t$ the structure of a Riemannian $S^3$-principal bundle
\begin{align}
	N_t\stackrel{S^3}{\longrightarrow} S^2_{1/2}\, .
\end{align}
Let us see that the $S^3$ fibers of this bundle are totally geodesic.  Indeed, on $S^3\times S^3$ we know that $W^t_1,W^t_2,W^t_3$ span the horizontal subspace induced the $S^3$ action downstairs.  On the other hand, we know by \eqref{e:step2:step1:2} that $\nabla_{W^t_j}W^t_j=0$.  One can compute directly from this that the $S^3$ fibers are totally geodesic, but let us also explain it geometrically. As $\nabla_{W^t_j}W^t_j=0$, it follows that the orbits of $W^t_j$ are horizontal geodesics in $S^3\times S^3$ and thus project to geodesics in $N_t$ which are tangent to the $S^3$ fibers.  As they span the tangent of the $S^3$ fibers at each point, we see that the fibers are totally geodesic.\\

To study the Ricci curvature of $N_t$ let us first study the geometry of the $S^3$ fibers.  We again observe from \eqref{eq:step1 ortho} that the geometry of the $S^3$ fibers are right invariant and invariant under the left $S^1$ action.  Thus we see that geometrically the fibers are a Hopf bundle 
\begin{align}
	S^3\stackrel{S^1}{\longrightarrow} S^2_{a_tb_t/2}\, ,
\end{align}
with fibers of length $\frac{a_t}{1+k^2}$ and the standard Hopf connection.  In particular, for $b_t\geq \frac{1}{1+k^2}$ the Ricci curvature of the fibers satisfy 
\begin{align}
	\Ric_F\geq \frac{c(k)}{a_t^2}\, .
\end{align}

Let us compute the Ricci curvature on $N_t$.  Recall that the vector fields $H\in \text{span}\{X_2,X_3,W^t_1,W^t_2, W^t_3\}$ in $S^3\times S^3$ horizontally span the tangent space of $N_t$. 
Though they do not well define vector fields on $N_t$ (one needs to lift a point $[s_1,s_2]\in N_t$ to $(s_1,s_2)\in S^3\times S^3$ for such an association), the value of $\Ric_{N_t}(H,H)$ is independent of the lift.  Now let $\xi_t\in\Omega^1(N_t;su(2))$ be the connection one form for the bundle $N_t\stackrel{S^3}{\longrightarrow} S^2_{1/2}$ with $\Omega_t = d\xi_t +\frac{1}{2}[\xi_t\wedge \xi_t]$ the curvature form.  Using Proposition \ref{prop:Rictotgeo} we have that the Ricci curvature of $N_t$ can be estimated
\begin{align}
	\Ric_{N_t}(W^t_i,W^t_i)\, &\geq c(k)\Big(\frac{1}{a_t^2}-a_t^2|\Omega_t|^2\Big)\, ,\notag\\
	|\Ric_{N_t}(W^t_i,X_j)|\, &\leq  c(k)a_t|\text{div}\, \Omega_t|^2\, ,\notag\\
	\Ric_{N_t}(X_j,X_j)\, &= 4-c(k)a_t^2|\Omega_t[X_j]|^2\, .\notag\\
\end{align}

It follows that if $a_t\leq a_t(k,\xi_t)$ is sufficiently small then $\Ric_{N_t}>2$.  This finishes Step 1 of the construction. $\qed$

\vspace{.3cm}

\subsection{Step 2: Trivializing the connection on $(N_t,h_t)$} \label{ss:equivariant_metric:step2}

In the second and third steps of our construction we focus on $(N_t,h_t)$.  In this second step we change smoothly the principal connection until we get to the flat one, which exists in view of Lemma \ref{lemma:Xtrivialbundle}. Along the process, we keep fixed the metric on the base space $(S^2, \frac{1}{4} g_{S^2})$, and we squish the metric on the fibers by a factor $\lambda(t)$, depending smoothly in time. The latter is needed to keep the Ricci curvature positive along the way.  In the third step we can increase the fiber size to arrive at $N_1=S^2_{1/2}\times S^3_1$ isometrically.  Precisely, the next two steps will accomplish the following:

\begin{lemma}\label{lemma:changeconnectionX}
There exists a smooth family $(N,h_t)$ for $t\in[1/2,1]$ of Riemannian metrics on $N_t$ with $Ric_{h_t}>0$, constant in a neighborhood of the end points, verifying the following properties:
\begin{itemize}
\item[i)] $N_t\stackrel{S^3}{\longrightarrow}S^2_{1/2}$ is a Riemannian $S^3$-principal bundle with totally geodesic fibers isometric to $(S^3,\lambda^2_t g_{S^3})$ 
\item[ii)] The $S^3$-connection $\eta_t$ on $N_t$ is flat for $t\in [3/4,1]$ ,
\item[iii)] $N_1$ is isometric to $(S^2\times S^3,\frac{1}{4}g_{S^2}+ g_{S^3})$ .
\end{itemize}
\end{lemma}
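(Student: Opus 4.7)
The plan is to construct the family $(N, h_t)$ in two substeps, both leveraging the triviality of the $S^3$-bundle $\pi:N \to S^2$ established in Lemma \ref{lemma:Xtrivialbundle} together with the canonical variation formula (Corollary \ref{cor:canonicalvariation}).

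First, on $t \in [1/2, 3/4]$ I would deform the connection. Since $\pi: N \to S^2$ is trivial, it admits a flat principal connection $\eta_{\rm flat}$ pulled back from the trivialization $N \cong S^2 \times S^3$. The space of principal connections is affine, so set $\eta_t := \chi(t)\eta_{1/2} + (1 - \chi(t)) \eta_{\rm flat}$ for a smooth monotone cutoff $\chi$ with $\chi(1/2) = 1$, $\chi(3/4) = 0$, constant near the endpoints. Together with the base metric $\frac{1}{4}g_{S^2}$ and a fiber-scaling parameter $\lambda_t > 0$, the data $(\eta_t, \frac{1}{4}g_{S^2}, \lambda_t^2 g_{S^3})$ determine via the Vilms construction \eqref{eq:Vilms} a Riemannian $S^3$-principal bundle structure $\pi:(N, h_t) \to S^2_{1/2}$ with totally geodesic fibers isometric to $(S^3, \lambda_t^2 g_{S^3})$.

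To ensure $\Ric_{h_t} > 0$ during this interpolation, $\lambda_t$ must be chosen small enough. By Corollary \ref{cor:canonicalvariation} and Proposition \ref{prop:Rictotgeo}, with $\lambda_t^2$ playing the role of the scaling factor, the horizontal Ricci is $\Ric_{S^2}(X,Y) - 2\lambda_t^2 (A_X, A_Y)$, the vertical Ricci per unit vector behaves like $\lambda_t^{-2}\Ric_{S^3} + \lambda_t^2 |A|^2$, and the mixed term is $O(\lambda_t)$. Since the integrability tensor $A$ is controlled by the curvature $2$-form of $\eta_t$, which stays uniformly bounded on $[1/2,3/4]$, choosing $\lambda_t \le \lambda_0 = \lambda_0(k)$ small enough makes these expressions positive: the base Ricci $\Ric_{S^2} = 4$ dominates the horizontal correction, while the positive vertical contribution $2\lambda_t^{-2}$ dominates everything else vertically, with the mixed term absorbed by Cauchy--Schwarz. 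We smoothly interpolate $\lambda_t$ from the value coming from Step~1 at $t=1/2$ down to this small constant.

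Second, on $t \in [3/4, 1]$ fix $\eta_t \equiv \eta_{\rm flat}$ and smoothly increase $\lambda_t$ from its small value back to $\lambda_1 = 1$. Flatness of $\eta_{\rm flat}$ forces $A \equiv 0$: the horizontal distribution is integrable and, by triviality, $N$ isometrically splits as $\big(S^2, \tfrac{1}{4}g_{S^2}\big) \times (S^3, \lambda_t^2 g_{S^3})$ throughout this range. Positivity of Ricci is then immediate from the product formula, and at $t = 1$ we recover $(S^2 \times S^3, \tfrac{1}{4}g_{S^2} + g_{S^3})$.

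The main obstacle is the first substep: carrying a non-flat connection deformation through while preserving positive Ricci. The coupling between the integrability tensor $A$ (set by the curvature of $\eta_t$) and the fiber size forces the careful choice of $\lambda_t$ above. The mechanism is the standard \emph{fiber squishing} trick: shrinking $\lambda_t$ suppresses the $\lambda_t^2 |A|^2$ horizontal correction while blowing up the positive $\lambda_t^{-2}\Ric_{S^3}$ vertical contribution, opening a window large enough to cross through the one-parameter family of non-flat connections $\eta_t$.
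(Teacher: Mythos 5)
Your proposal matches the paper's proof essentially line by line: both exploit the bundle triviality from Lemma \ref{lemma:Xtrivialbundle} to obtain a flat connection $\Phi^*\xi^{\rm MC}$, affinely interpolate the connection on $[1/2,3/4]$ while shrinking the totally geodesic $S^3$ fibers via the Vilms ansatz \eqref{eq:Vilms} to keep $\Ric_{h_t}>0$, and then grow $\lambda_t$ back to $1$ on $[3/4,1]$ once the connection is flat and $N_t$ splits isometrically. The only cosmetic difference is that you phrase the fiber-scaling dependence through Corollary \ref{cor:canonicalvariation} while the paper writes out the resulting O'Neill Ricci formulas explicitly; the underlying argument is identical.
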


\medskip

Let $\lambda_{1/2}\equiv \frac{a_{1/2}}{1+k^2}$ be the size of the $S^3$ fibers of $N_{1/2}$, as in the previous section.  Recall that $N_{1/2}\stackrel{S^3_{\lambda_{1/2}}}{\longrightarrow} S^2_{1/2}$ has the structure of a Riemannian $S^3$-principal bundle whose metric is well defined by a principal connection $\xi_{1/2}\in \Omega^1(N;su(2))$, the base metric $S^2_{1/2}$, and the binvariant fiber metric $S^3_{\lambda_{1/2}}$.\\
  
For all $t\in [1/2,1]$ we will construct a (smoothly varying) family $\xi_{t}\in \Omega^1(N;su(2))$ and $\lambda_t$ such that $N_t$ is the induced Riemannian $S^3$-principal bundle
\begin{align}
	N_t\stackrel{S^3_{\lambda_t}}{\longrightarrow} S^2_{1/2}\, .
\end{align}

Let us first build the family of connections.
By Lemma \ref{lemma:Xtrivialbundle}, there exists a smooth $S^3$-equivariant map $\Phi:N\to S^2\times S^3$, where we view $S^2\times S^3$ as the trivial principal $S^3$-bundle over $S^2$. The flat principal connection on $S^2\times S^3$ can be identified with the Maurer-Cartan form $\xi^{\rm MC}$ of $S^3$.  Note that $\Phi^*\xi^{MC}$ is then a flat connection on $N\to S^2$, though clearly its coordinate expression does not mesh well with the earlier constructions.  We define $\xi_t$ as the family of connections which come from an affine combination:
\begin{align}
	\xi_t = \big(1-\alpha(t)\big)\,\Phi^*\xi^{\rm MC}+ \alpha(t)\,\xi_{1/2}\, ,
\end{align}
where $\alpha(t)\geq 0$ is a nonincreasing smooth function with $\alpha\equiv 1$ for $t$ near $1/2$ and $\alpha\equiv 0$ for $t$ near $\frac{3}{4}$.  We can then define the metric $h_t$ explicitly as by Vilms \cite{Vilms}: 
\begin{equation}\label{eq:Vilmsappl}
h_t(X,Y):=\frac{1}{4}g^{}_{S^2}(\pi_*[X],\pi_*[Y])+\lambda_t^2g_{S^3}(\xi_t[X],\xi_t[Y])\, ,
\end{equation} 
where we will specify the smooth function $\lambda(t):[\frac{1}{2},1]\to \dR^+$ momentarily.  Let us denote $\Omega_t = d\xi_i+\frac{1}{2}[\xi_t\wedge \xi_t]$ the curvature of our connection.  Let $X$ denote a unit horizontal direction with respect to the $S^2_{1/2}$ base, and let $W$ denote a unit vertical direction with respect to the $S^3$ action.  Then using Proposition \ref{prop:Rictotgeo} we can compute the Ricci curvature of this metric as
\begin{align}
	\Ric_{h_t}[W,W] &= \frac{2}{\lambda^2_t}+\frac{\lambda_t^2}{4}|\Omega_t|^2\, ,\notag\\
	\Ric_{h_t}[W,X] &=\lambda_t\text{div}_{S^2} \Omega_t[X]\, ,\notag\\
	\Ric_{h_t}[X,X] &=4-\lambda_t^2|\Omega_t[X]|^2\, .
\end{align}

It follows for $\lambda_t\leq\lambda_t(\Omega_t)$ that $\Ric_{h_t}>1$ is uniformly positive for $t\in [1/2,3/4]$.

\vspace{.3cm}

\subsection{Step 3:  Trivializing the Geometry}\label{ss:equivariant_metric:step3}

For $t=3/4$ we now have that $\Omega_{3/4}=0$ vanishes.  In particular we have isometrically that
\begin{align}
N_{3/4}\equiv S^2_{1/2}\times S^3_{\lambda_{3/4}}\, .	
\end{align}

It follows that over the range $t\in [3/4,1]$ we may increase $\lambda_t$ until $\lambda_t \equiv 1$ for $t$ in a neighborhood of $1$, at which point we have that $N_1\equiv S^2_{1/2}\times S^3_1$, finishing the proof of Lemma \ref{lemma:changeconnectionX}. $\qed$

\vspace{.3cm}

\subsection{Step 4: Constructing $(S^3\times S^3,g_t)$ with $\Ric_{g_t}\geq 0$}\label{ss:equivariant_metric:step4}

We have now built a family of geometries $(N,h_t)$ with $\Ric_{h_t}>0$ which begin at $N_0=S^1\backslash (S^3\times S^3,g_{0})$ and end at $N_1=S^2\times S^3$ isometrically.  Further, we have explicitly built for $t\in [0,1/2]$ a family of metrics $(S^3\times S^3,g_t)$ which are Riemannian $S^1$-principal bundles over $(N_t,h_t)$ with totally geodesic fibers of constant length $2\pi f_t$.  The $S^1$-connections $\eta_t$ of these bundles are both Yang-Mills and in Coulomb gauge.  That is, the curvature form $\omega_t=d\eta_t$ and $\eta_t$ both have vanishing horizontal divergence on $S^3\times S^3$.  For $\omega_t$ this is equivalent to asking that the curvature $2$-form on $N_t$ be divergence free, which is itself equivalent to asking that $\omega_t$ be Hodge harmonic.\\ 

In the next Step our goal is to extend this construction to a smoothly varying family of Riemannian metrics $g_t$ for $t\in [1/2,1]$ with nonnegative Ricci curvature.  The metrics $(S^3\times S^3,g_t)$ will also be Riemannian $S^1$-principal bundles over $N_t$ with respect to connections $\eta_t$ and with totally geodesic $S^1$ fibers of constant length $f_t$.  In this Step of the construction we will choose the connections $\eta_t$ uniquely so that they are Yang-Mills and in Coulomb Gauge.  We will see this is sufficient to force $\Ric_{g_t}\geq 0$.  In the next Step we will allow the warping function $f_t$ to not be constant and vary as a function of $N_t$ in order to push the Ricci curvature to become positive.\\

Now let $[\omega_t]=[\omega_0]\in H^2(N_t)$ be the cohomology class associated to the $(1,k)$-circle bundle $S^3\times S^3\to N_t$.  Note that $H^2(N_t)=\dZ$ as we already understand from Lemma \ref{lemma:Xtrivialbundle} that $N_t$ is diffeomorphic to $S^2\times S^3$, and that $[\omega_t]$ is the generating class of this cohomology.  We can view $[\omega_t]$ as the deRham cohomology class generated by the curvature of any connection of this bundle.\\

For each $t\in [0,1]$ let $\omega_t$ be the unique representative of $[\omega_0]$ which is divergence free with respect to the geometry of $N_t$.  That is, let $\omega_t$ be the unique Hodge-harmonic representative.  As this class is unique it follows that for $t\in [0,1/2]$ this choice agrees with our original construction of the curvature, and it smoothly extends this choice to all $t\in [0,1]$.  Thanks to the invariance of $h_t$ by the right action of $S^3$ on $N_t$, we automatically have that $\omega_t$ is also invariant by the right action of $S^3$ on $N_t$.\\

Now for each such $\omega_t$ there is a connection $[\eta_t]\in \Omega^1(S^3\times S^3)$ whose curvature $d[\eta_t] = \pi^*\omega_t$ is equal to our enforced curvature choice for our $(1,k)$-bundle.  We write $[\eta_t]$ to represent that the connection is not uniquely defined by this condition.  It is only well defined up to the addition of $d\rho_t$ where $\rho_t$ is a $S^1$-invariant function on $S^3\times S^3$.  That is, $\rho_t$ is a function on $N_t$.  In order to pick a unique $\eta_t\in [\eta_t]$ from this class we will ask that it minimizes
\begin{align}
	\eta_t = \arg\min_{\zeta_t\in [\eta_t]}\int_{S^3\times S^3} |\zeta_t|^2_t\, .
\end{align}
It is classical that this minimization exists, and indeed there is a unique solution.  The Euler-Lagrange equation for this minimization is given by the Coulomb gauge condition
\begin{align}
	\text{div}_{N_t}\eta_t =\text{div}_{g_t}\eta_t = 0\, .
\end{align}

A Riemannian metric $(S^3\times S^3,g_t)$ is now well defined by the metrics $(N,h_t)$, the family of connections $\eta_t$, and the fiber size $f_t\in \dR$ of the circle fibers.  If we let $T$ represent a unit vertical direction with respect to the $S^1$ action and $H$ a unit horizontal direction, then we can compute the Ricci curvature
\begin{align}\label{e:equivariant_mapping:step4:1}
	\Ric_{g_t}(T,T) &= \frac{f_t^2}{4}|\omega_t|^2\, ,\notag\\
	\Ric_{g_t}(T,H) &= \text{div}_{N_t}\omega_t[H] = 0\, ,\notag\\
	\Ric_{g_t}(H,H) &= \Ric_{h_t}[H,H]-\frac{f_t^2}{2}|\omega_t[H]|^2\, .
\end{align}

Let us now make some observations on the above computations.  Recall that for $t$ near $1$ we have that $N_t\equiv S^2\times S^3$.  For quantitative sake let us say this holds for $t\geq t_0$ with $t_0<1$.  Recall that for all $t$ we have $\Ric_{h_t}>0$, and so in particular for $t\leq t_0$ we see that if $f_t\leq f_t(h_t,\omega_t)$ then $\Ric_{g_t}\geq 0$ and $\Ric_{g_t}(H,H)>\tau$ for some $\tau>0$.  For $t\geq t_0$, where $N_t\equiv S^2_{1/2}\times S^3_1$, we see that $\omega_t$ is precisely the volume form on the $S^2$ factor, as this is the unique Hodge-harmonic representative of the cohomology class generated by $S^2$.  We further have that $\eta_t$ is the canonical Hopf connection on the first $S^3$ factor, as it is the unique Coulumb gauge connection representing this curvature.  In particular, in the range $t\in [t_0,1]$ we can increase $f_t$ until $f_t\equiv 1$ for $t$ near $1$.  We then get that $g_1\equiv S^3_1\times S^3_1$. \\

Observe however that while $g_1$ is isometrically equivalent to $S^3_1\times S^3_1$, the $S^1$ action coming from the $S^1$-principal bundle structure $S^3\times S^3\to N_1$ is now precisely the $(1,0)$ action, as our $S^1$ bundle was the one coming from the Hopf fiber of the first factor.  We have therefore nearly completed the proof of Theorem \ref{t:equivariant_mapping_class:2}.\\

\subsection{Step 5:  Constructing $(S^3\times S^3,g_t)$ with $\Ric_{g_t}> 0$}\label{ss:equivariant_metric:step5}

We have at this stage built a family of metrics $(S^3\times S^3,g_t)$ with isometric $S^1$ actions which begin and end isometrically at $S^3_1\times S^3_1$ with the $(1,k)$ and $(1,0)$ actions, respectively.  Further, these metrics all satisfy $\Ric_{g_t}\geq 0$. \\

 We have not yet completed the proof however.  It follows from \eqref{e:equivariant_mapping:step4:1} that we can choose $f_t$ sufficiently small that 
 \begin{align}
 	\Ric_{g_t}[H,H]>\tau>0\, ,
 \end{align}
is uniformly positive for all $t\in [1/2,1]$.  However, if we have $|\omega_t|^2=0$ at some point, then $\Ric_{g_t}(T,T) = \frac{f_t^2}{4}|\omega_t|^2=0$ at this point.  Were this to occur, then we would only have $\Ric_{g_t}\geq 0$, and it will be important in the applications that we get strict positivity.\\

In order to handle this, we will perturb our $S^1$ warping functions $f_t$. Currently $f_t$ is spatially constant, and our perturbation will be by a small function of $N_t$, cf. with \cite{GilkeyParkTuschmann}.  Let us write our new warping function as
\begin{align}
	\tilde f_t = f_t+\epsilon_t h_t\, ,
\end{align}
where $h_t:N\to \dR$ will be a smoothly varying collection of smooth functions which vanish for $t$ near $\frac{1}{2}$ and $1$, and $\epsilon_t$ will be sufficiently small constants depending smoothly on time.\\

In order to pick $h_t:N_t\to \dR$ let us begin with several observations.  First as $(N,h_t)$ is invariant under the right $S^3$ action, we have that $|\omega_t|^2$ is invariant under this right action as well.  In particular, we can view $|\omega_t|^2$ as a function of $N/S^3=S^2_{1/2}$.  We will similarly choose $h_t$ to be an $S^3$ invariant function, which is to say a function of $S^2$.\\

As a second observation, let us point out that $\omega_t$ is non-trivial in cohomology, hence cannot be flat and so we have
\begin{align}\label{e:equivariant_mapping:step5:1}
	\fint_{N}|\omega_t|^2
	\ge 
	c_0 \, ,
	\quad \text{for every $t\in [0,1]$}\, ,
\end{align}
for some $c_0>0$.\\

Let us now consider a smooth cutoff function $\phi:\dR\to [-1,0]$ with $\phi(s)=-1$ if $s\leq 10^{-2}c_0$ and $\phi(s)=0$ if $s\geq 10^{-1}c_0$.  Let us define $h_t:S^2\to \dR$ as the solution of 
\begin{align}
	\Delta_{S^2} h_t \equiv \phi(|\omega_t|^2)-\fint_{S^2}\phi(|\omega_t|^2)\, .
\end{align}  
If we take $\fint h_t = 0$ then $h_t$ is uniquely defined, smooth, and smoothly varying in $t$.  As a first observation note that if $|\omega_t|^2>10^{-1}c_0$ on $N$, then we have that $\Delta h_t\equiv 0$ identically vanishes.  Let us also see that $\Delta h_t$ is uniformly negative when $|\omega_t|$ is small.  To begin, as everything is smooth let us define $M$ so that 
\begin{align}\label{e:equivariant_mapping:step5:2}
	|\nabla h_t|+|\nabla^2 h_t|+|\omega_t|+|\nabla\omega_t|\leq M\, ,
\end{align}
uniformly for all $t\in [1/2,1]$ .  Using \eqref{e:equivariant_mapping:step5:1} we have that there exists at least one point with $|\omega_t|^2(p) \geq c_0$, and so by \eqref{e:equivariant_mapping:step5:2} we have that $|\omega_t|^2 > 10^{-1}c_0$ and hence $\phi(|\omega_t|^2)=0$ on $B_{c_0(20M^2)^{-1}}(p)$.  Consequently, we have that
\begin{align}
	\fint \phi(|\omega_t|^2)>-1 + c_0^2(20 M^2)^{-2}\, .
\end{align}
It follows that if $x\in \{|\omega_t|^2<10^{-2}c_0\}$ then
\begin{align}\label{e:equivariant_mapping:step5:3}
	\Delta h_t(x)<-c_0^2(20 M^2)^{-2}\, .
\end{align}

Now let $\tilde f_t = f_t+\epsilon_t h_t$ be our warping function and to begin let $\epsilon_t<(2M)^{-1}f_t$, so that $\frac{1}{2}f_t<|\tilde f_t|<2f_t$.  We will further decrease $\epsilon_t$ later. Recall that we previously chose $f_t$ small enough in Step 4, in order to guarantee $\Ric_{g_t}\ge 0$ and $\Ric_{g_t}>\tau>0$ in the horizontal directions when we set $\epsilon_t=0$.
 
We will use Proposition \ref{prop:S1bundlewarped} to compute the Ricci curvature on $(S^3\times S^3,g_t)$.  Let $T$ be a unit direction in the $S^1$ fiber direction and let $H$ represent any unit direction perpendicular to $T$.  Let us split our computation into two regions.  If we are on the region $\{|\omega_t|^2>10^{-2}c_0\}$ then we can estimate 
\begin{align}
	\Ric(T,T)&\geq \frac{f_t^2c_0}{400} - \frac{\epsilon_t}{f_t}\, ,\notag\\
	|\Ric(T,H)|&\leq \frac{3}{2} M^2 \epsilon_t\, ,\notag\\
	\Ric(H,H)&\geq \Ric_{h_t}(H,H)-\frac{f_t^2}{2} M^2-\frac{4M\epsilon_t}{f_t}\, .
\end{align}

Here we used that Laplacians and Hessians of $S^3$-invariant functions on $N$ can be identified with the corresponding objects in the base space $S^2$, since the $S^3$ fibers are totally geodesic.  Then we can choose $f_t<f_t(h_t)$ and $\epsilon_t<\epsilon_t(f_t,M,c_0,|\Ric|_{h_t})$ and obtain $\Ric_{g_t}>0$ in the region $\{|\omega_t|^2>10^{-2}c_0\}$.\\

On the other hand, let us consider the region $\{|\omega_t|^2<10^{-2}c_0\}$, then we can use \eqref{e:equivariant_mapping:step5:3} to estimate
\begin{align}
	\Ric(T,T)&\geq \frac{\epsilon_tc_0^2(20 M^2)^{-2}}{2f_t}\, ,\notag\\
	|\Ric(T,H)|&\leq \frac{3}{2} M^2 \epsilon_t\, ,\notag\\
	\Ric(H,H)&\geq \Ric_{h_t}(H,H)-\frac{f_t^2}{2} M^2-\frac{4M\epsilon_t}{f_t} \, .
\end{align}
It is the estimate of the first term which has changed.
If we now further assume $f_t<f_t(h_t)$ and $\epsilon_t<\epsilon_t(f_t,M,c_0,|\Ric|_{h_t})$, then we can again conclude that $\Ric_{g_t}>0$.  

\medskip

\medskip

\subsection{Finishing the Proof of Theorem \ref{t:equivariant_mapping_class:2}}\label{ss:equivariant_metric:finish_proof}

We have essentially finished the proof of Theorem \ref{t:equivariant_mapping_class:2} at this stage, however let us put all the ingredients together to see that this is the case.\\

We already noticed that there is an $S^3$-equivariant isometry $\Phi:(N,h_1)\to (S^2\times S^3,\frac{1}{4}g_{S^2}+g_{S^3})$ with respect to the respective right $S^3$ actions.  Moreover, consider the $S^1$ bundles $\pi_{(1,k)}:S^3\times S^3\to N$ and $\pi_{(1,0)}:S^3\times S^3\to S^2\times S^3$.  Then the $S^1$ bundles $\pi_{(1,k)}:S^3\times S^3\to N$ and $\Phi^*\pi_{(1,0)}:S^3\times S^3\to N$ are necessarily isomorphic as $S^1$ bundles over $N$ as they arise from the same cohomology class.  That is, we can find an $S^1$-equivariant diffeomorphism 
\begin{align}
	\hat{\Phi}:S^3\times S^3\to S^3\times S^3\, ,
\end{align}
with $\hat\Phi(\theta\cdot_{(1,k)}(s_1,s_2)) = \theta\cdot_{(1,0)}\hat\Phi(s_1,s_2)$ whose induced mapping on the quotients is given by the isometry $\Phi:(N,h_1)\to S^2_{1/2}\times S^3_1$.\\

We claim that, up to composition with a gauge transformation, $\hat{\Phi}$ is an $S^1$-equivariant isometry between $(S^3\times S^3,g_1)$ with the $(1,k)$-Hopf action, and $(S^3\times S^3,g_{S^3}+g_{S^3})$ with the $(1,0)$-Hopf action.\\

Begin by observing that as the induced quotient map $\Phi$ is an isometry, if we denote by $\eta_1$ and $\eta_c$ the $S^1$-principal connections of $\pi_{(1,k)}:S^3\times S^3\to N_1$ and $\pi_{(1,0)}:S^3\times S^3\to S^2\times S^3$ respectively, it holds
\begin{equation}
	d\left(\hat{\Phi}^*\eta_c\right)=d \eta_1\, ,
\end{equation}
since both connections are Yang-Mills, and the curvature form being Hodge-harmonic is a uniquely defined condition in its cohomology class.  As $S^3\times S^3$ has trivial first cohomology, there exists a smooth function $\phi:S^3\times S^3\to \mathbb{R}$ such that 
\begin{equation}
	\hat{\Phi}^*\eta_c=\eta_1 + d\phi\, ,
\end{equation}
and by right invariance we can assume that $\phi=\phi'\circ \pi_{(1,k)}$ for some smooth function $\phi':N\to \mathbb{R}$.  In particular, and with a slight abuse of notation, after composing with the gauge transformation induced by $\phi'$ we can assume that 
\begin{equation}
	\hat{\Phi}^*\eta_c=\eta_1\, .
\end{equation}
Therefore, $\hat{\Phi}$ is an $S^1$-equivariant diffeomorphism between principal $S^1$-bundles with totally geodesic and isometric $S^1$-fibers, it induces and isometry between the base spaces and maps one connection form to the other. Hence it is an $S^1$-equivariant isometry, and this finishes the proof of Theorem \ref{t:equivariant_mapping_class:2}. $\qed$\\

\subsection{Explicit Diffeomorphism for $k=1$}\label{ss:equivariant_metric:diffeo}

Let us end by addressing Remark \ref{r:equivariant_mapping_diffeo} and show that for $k=1$ we can explicitly choose the diffeomorphism $\phi$ in Theorem \ref{t:equivariant_mapping_class:2} by $\phi=\phi_1$ from Example \ref{ss:equivariant_metric:examples}.  \\

Our first observation is that $\phi_1$, besides pushing forward the $(1,1)$-action to the $(1,0)$-action is also equivariant with respect to the right $S^3$-action. Hence it induces an $S^3$-principal bundles isomorphism $\Phi_1:N\to S^2\times S^3$. It turns out that there is an explicit expression for $\Phi_1$, namely
\begin{equation}
\Phi_1([s_1,s_2])=(\pi_{\rm Hopf}(s_1),s_1^{-1} \cdot s_2)\, .
\end{equation}
By using $\Phi_1$ in place of $\Phi$ in Step 2, and arguing as Section \ref{ss:equivariant_metric:finish_proof}, we get an isometry $\hat \Phi_1 : (S^3\times S^3, g_0) \to (S^3 \times S^3, g_1)$ which coincides with $\phi_1$ up to gauge transformation. In particular, $\hat \Phi_1$ and $\phi_1$ are isotopic.

\vspace{.5cm}

\section{Step 2: Equivariant Twisting}\label{s:step2_proof}

Our main goal in this Section is to prove Proposition \ref{p:step2}, which is Step 2 of the construction, which we restate for the convenience of the reader:

\begin{proposition}[Step 2: Twisting the Action]\label{p:step2:2}
	Let $1>\epsilon,\hat \epsilon,\delta>0$ with $k\in \dZ$.  Then there exist $\hat \delta(\epsilon,\hat \epsilon,\delta,k)>0$ with $R(\epsilon,\hat \epsilon,\delta,k)>1$ and a metric space $X$ with an isometric and free $S^1$ action such that
	\begin{enumerate}
		\item $X$ is smooth away from a single three sphere $S^3_\delta\times \{p\}\in X$ with $\Ric _X\geq 0$.
		\item There exists $B_{10^{-3}}(p)\subseteq U\subseteq B_{10^{-1}}(p)\subseteq X$ which is isometric to $S^3_\delta\times B_{10^{-2}}(0)\subseteq S^3_\delta\times C(S^3_{1-\epsilon})$ , and under this isometry the $S^1$ action on $U$ identifies with the $(1,k)$-Hopf action. 
		\item There exists $B_{10^{-1}R}(p)\subseteq \hat U\subseteq B_{10R}(p)\subseteq X$ s.t. $X\setminus \hat U$ is isometric to $S^3_{\hat \delta R}\times A_{R,\infty}(0)\subseteq S^3_{\hat \delta R}\times C(S^3_{1-\hat \epsilon})$, and under this isometry the $S^1$ action on $X\setminus \hat U$ identifies with the $(1,0)$-Hopf action. 
	\end{enumerate}
\end{proposition}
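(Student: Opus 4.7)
The plan is to build $X$ as an $S^1$-invariant Riemannian metric on a smooth manifold diffeomorphic to $S^3\times\dR^4$, assembled in three radial phases along $r\in[0,\infty)$: an inner phase reproducing $S^3_\delta\times C(S^3_{1-\epsilon})$ near the cone tip $p$ at $r=0$; a middle ``twisting'' phase where the $S^1$-action is deformed using Theorem~\ref{t:equivariant_mapping_class_S3xS2}; and an outer phase reproducing $S^3_{\hat\delta R}\times C(S^3_{1-\hat\epsilon})$ with the $(1,0)$-Hopf action. The singular three-sphere of condition~(1) will be the cone tip $S^3_\delta\times\{0\}$ of the inner phase; elsewhere the metric will be smooth.

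\textbf{Inner and outer phases.} For $r\in[0,r_1]$ take $g=\delta^2 g_{S^3}+dr^2+(1-\epsilon)^2 r^2 g_{S^3}$; this gives an isometry of $\{r\leq r_1\}$ with an open ball in $S^3_\delta\times C(S^3_{1-\epsilon})$, and carries the $(1,k)$-Hopf action freely and isometrically. On a transition annulus $r\in[r_1,r_2]$ use a doubly warped product $g=dr^2+a(r)^2 g_{S^3}+b(r)^2 g_{S^3}$ with $a,b$ smooth concave functions matching the inner model $C^\infty$ at $r=r_1$ and satisfying $a(r_2)=b(r_2)=\lambda_0$; the formulas of Section~\ref{ss:prelim:submersions}, used exactly as in the proof of Proposition~\ref{p:step1:2}, yield $\Ric\geq 0$. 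The outer phase $r\in[r_3,r_4]$ is a mirror doubly warped interpolation from $(\lambda_0,\lambda_0)$ at $r_3$ back out to $(\hat\delta R,(1-\hat\epsilon)r)$ at $r_4$, with the metric for $r\geq r_4$ exactly $(\hat\delta R)^2 g_{S^3}+dr^2+(1-\hat\epsilon)^2 r^2 g_{S^3}$. Since both the $(1,k)$- and $(1,0)$-Hopf actions preserve the two $S^3$-factors individually, they act isometrically on these doubly warped pieces.

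\textbf{Middle phase.} Apply Theorem~\ref{t:equivariant_mapping_class_S3xS2} to obtain a smooth family $\{g_t\}_{t\in[0,1]}$ of metrics on $S^3\times S^3$ with $\Ric_{g_t}>0$ uniformly in $t$, with the $(1,k)$-Hopf action isometric for every $t$, and with $g_0=g_{S^3\times S^3}$, $g_1=\phi^*g_{S^3\times S^3}$. Choose $t\colon [r_2,r_3]\to[0,1]$ smooth and monotone, equal to $0$ near $r_2$ and to $1$ near $r_3$ (all derivatives vanishing at the endpoints), and set $g=dr^2+\lambda_0^2\,g_{t(r)}$. At $r=r_2$ this agrees with the end of the inner phase; at $r=r_3$ it equals $dr^2+\lambda_0^2\,\phi^*g_{S^3\times S^3}$. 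Reparametrising the $S^3\times S^3$ coordinates by $\phi$ converts the latter into $dr^2+\lambda_0^2\,g_{S^3\times S^3}$ and, by condition~(3) of Theorem~\ref{t:equivariant_mapping_class_S3xS2}, converts the $(1,k)$-Hopf action into the $(1,0)$-Hopf action, smoothly matching the start of the outer phase.

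\textbf{Main obstacle.} The analytic heart of the construction is the verification of $\Ric\geq 0$ on the middle phase. For $g=dr^2+\lambda_0^2\,g_{t(r)}$ on $[r_2,r_3]\times S^3\times S^3$, the generalised warped product Ricci formulas yield, schematically,
\[
\Ric = \Ric_{g_{t(r)}} + E(r)\, ,
\]
with $\|E(r)\|\leq C\bigl(|t'(r)|+|t'(r)|^2+|t''(r)|\bigr)$, where $C$ depends only on $\lambda_0$ and the $C^2$-norm of the family $t\mapsto g_t$ over $[0,1]$. Theorem~\ref{t:equivariant_mapping_class_S3xS2} provides $\Ric_{g_t}\geq \kappa_0>0$ uniformly in $t$. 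Taking $t(r)$ to be a rescaled mollified step function with $|t'|+|t''|\leq C_0/(r_3-r_2)$ and $r_3-r_2$ sufficiently large (depending on $\kappa_0$, $\lambda_0$, and the $C^2_t$-norm of $g_t$) then forces $\|E\|<\kappa_0$, giving $\Ric\geq 0$ throughout. The resulting outer radius $R\sim r_4$ and matching constant $\hat\delta=\lambda_0/R$ are thereby determined by $(\epsilon,\hat\epsilon,\delta,k)$ through the required middle-phase length and the inner/outer doubly warped interpolations, providing the asserted $R(\epsilon,\hat\epsilon,\delta,k)$ and $\hat\delta(\epsilon,\hat\epsilon,\delta,k)$.
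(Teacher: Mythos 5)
Your overall plan --- an inner model, a middle region where Theorem~\ref{t:equivariant_mapping_class_S3xS2} deforms the cross section, and an outer model --- is in the same spirit as the paper's construction, but the middle phase as you have set it up cannot have nonnegative Ricci curvature, and this is a genuine gap rather than a detail.

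The problem is the metric $g = dr^2 + \lambda_0^2\, g_{t(r)}$ on a \emph{straight} cylinder $[r_2,r_3]\times S^3\times S^3$. Writing $g_r = \lambda_0^2 g_{t(r)}$, the radial Ricci curvature is
\begin{equation}
\Ric_g(\partial_r,\partial_r) \;=\; -\tfrac12\,\partial_r\big[\,\mathrm{tr}(g_r^{-1}\partial_r g_r)\,\big]\;-\;\tfrac14\,\big|g_r^{-1}\partial_r g_r\big|^2\, .
\end{equation}
The Ricci curvature of the family $g_{t(r)}$ contributes nothing here --- it only enters the cross-sectional components --- so the $rr$-component is \emph{entirely} your error term $E$. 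Because you take $t'(r_2)=t'(r_3)=0$, integrating the displayed identity over $[r_2,r_3]$ kills the first (total-derivative) term and yields
\begin{equation}
\int_{r_2}^{r_3} \Ric_g(\partial_r,\partial_r)\,dr \;=\; -\,\tfrac14\int_{r_2}^{r_3} (t')^2\,\big|\dot g_{t}\big|^2_{g_t}\,dr \;<\;0
\end{equation}
whenever the family $g_t$ is nonconstant on $[0,1]$, as it must be to change the $S^1$ action from $(1,k)$ to $(1,0)$. Thus $\Ric_g(\partial_r,\partial_r)<0$ somewhere, and no choice of how slowly $t(r)$ varies can fix this: lengthening the cylinder shrinks the negativity pointwise but its integral stays strictly negative. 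This is precisely why the bound $\|E\|\le C(|t'|+|t'|^2+|t''|)$, while correct, is of no help: it controls $|E|$, not its sign, and on the $rr$-slot the big positive term $\Ric_{g_{t(r)}}$ that you were hoping to offset $E$ against simply is not there.

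The paper circumvents this by performing the twist along a region where both sphere factors grow \emph{linearly} in $r$, i.e.\ with an ansatz $dr^2 + h(r)^2 g_{t(r)}$ where $h$ is almost, but not exactly, a linear function: a carefully designed concavity (the $\ln\ln$-corrected shape of $h$ in the construction of $X_4$, Section~\ref{ss:step2_proof:X4}) produces a strictly positive term $-6h''/h$ in $\Ric_{rr}$, and it is this term that absorbs the unavoidable $-\tfrac14 |\partial_r g_r|^2$ contribution, provided the twist is spread over a long enough interval. This is also why the stages $X_1$--$X_3$ exist in the paper: they convert your inner model, where one sphere factor is constant-size, into a cone over $S^3\times S^3$ with both factors growing, which is the only geometry where such a concavity trick is available; your transition to a flat product $S^3_{\lambda_0}\times S^3_{\lambda_0}$ goes in the wrong direction. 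Finally, the paper's Lemma~\ref{l:step2:refined_twisting} normalizes the family $g_t$ to have constant volume form; this is needed to control the mixed term $\Ric_{ir}$ (and to make the $-6h''/h$ term appear cleanly in $\Ric_{rr}$), and it is an essential ingredient of the estimate that your proposal omits entirely.
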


We will build the space $X$ in pieces throughout this Section.  Let us give a rough description of the steps involved and break down the role of each subsection.  The starting point for our construction is to take $X_0 \equiv S^3_\delta\times C(S^3_{1-\epsilon})$, which by condition (2) in Proposition \ref{p:step2:2} is the beginning of our $X$.  In Section \ref{ss:step2_proof:X1} we will construct $X_1$ by first bending the $\dR^4=C(S^3_{1-\epsilon})$ factor down to a sharper cone $C(S^3_{1/4})$ .  This will give us the extra curvature we need in Sections \ref{ss:step2_proof:X2} and \ref{ss:step2_proof:X3} in order to construct $X_3$.  The goal of $X_2$ and $X_3$ will be to take the constant $S^3_\delta$ factor and lift it to a linearly growing factor.  That is, outside a compact subset $X_3$ will be isometric to $C(S^3_{\delta_3}\times S^3_{1/8})$.  \\

In Section \ref{ss:step2_proof:refined_twisting} we will prove a refinement of Theorem \ref{t:equivariant_mapping_class:2}.  Namely we will use Theorem \ref{t:equivariant_mapping_class:2} in order to construct a family of $S^1$ invariant metrics $(S^3\times S^3,g_t)$ which begin at $S^3_{\delta_3}\times S^3_{1/8}$ with the $(1,k)$-action, end at $S^3_{\delta_3}\times S^3_{1/8}$ with the $(1,0)$ action, satisfy $\Ric_t>6$ and which have constant volume form.  We will apply this in Section \ref{ss:step2_proof:X4} to build $X_4$, which will perform our twisting and end so that outside a compact subset $X_4$ is isometric to an annulus in $C(S^3_{\delta_4}\times S^3_{1/16})$ with the new action. \\

We still need to take $X_4$ back to the $S^3_{}\times \dR^4$ geometry, which will require several steps.  In Section \ref{ss:step2_proof:X5} we will construct $X_5$, which outside of a compact subset will turn the linear growth of the second $S^3_{\delta_4}$ factor into a slow polynomial growth.  That slow growth will be useful in  Section \ref{ss:step2_proof:X6} to construct $X_6$, which will take the $C(S^3_{1/16})$ factor and increase the size of the cross section until we arrive at $C(S^3_{1-\hat\epsilon})$, which is isometrically very close to $\dR^4$.  Finally in Section \ref{ss:step2_proof:X7} we will construct $X=X_7$, which will take the first sphere, which is still growing at a slow polynomial rate, and level it out to constant size $S^3_{\hat\delta}$, which will finish the proof of Proposition \ref{p:step2:2}.

\vspace{.3cm}

\subsection{Constructing $X_1$}\label{ss:step2_proof:X1}

Let us begin with $X_0 \equiv S^3_{\delta}\times C(S^3_{1-\epsilon})$, which geometrically has a metric which may be written 
\begin{align}
	g_0 \equiv dr^2 +\delta^2 g_{S^3}+ (1-\epsilon)^2 r^2 g_{S^3} \, .
\end{align}

Our first step of the construction is to shrink down the size of the cone cross section.  The purpose of this is to increase the curvature sufficiently so that in the second and third steps we can turn our constant $S^3$ factor into a cone factor as well.\\

Let $U_0\equiv \{r\leq 1\}$, then we will define the metric $g_1$ on $X_1\setminus U_0$ through the ansatz

\begin{align}
	g_1 \equiv dr^2 + \delta^2 g_{S^3}+ h(r)^2 g_{S^3}\, .
\end{align}

If we let $a,b,c$ denote the directions on the first $S^3$ factor and $i,j,k$ on the second $S^3$ factor (the one that we are currently viewing as a cross section), then we can compute the nonzero terms of the Ricci curvature of this ansatz as

\begin{align}\label{e:step2:X1:Ricci}
	&\Ric_{rr} = -3\frac{h''}{h}\, ,\notag\\
	&\Ric_{aa} = \frac{2}{\delta^2}\, ,\notag\\
	&\Ric_{ii} = 2 \frac{1-(h')^2}{h^2}-\frac{h''}{h}\, .
\end{align}

Now let us define $h(r)$ so that

\begin{align}
	h(r) \equiv \begin{cases}
		(1-\epsilon)r & \text{ if } r\leq 10^{-1}\, ,\notag\\
		h''<0 & \text{ if } 10^{-1}\leq r\leq 10 \, ,\notag\\
		(1-\epsilon)+(r-1)/4 & \text{ if } r\geq 10^{}\, .
	\end{cases}
\end{align}

We can build a function as above by smoothing out $h(r)=\min\{(1-\epsilon)r,(1-\epsilon)+(r-1)/4 \}$ near $r=1$, the intersection point of the two lines.  Observe that we always have $|h'|\leq 1-\epsilon$ as $h''<0$, and so we can estimate the Ricci curvature 
\begin{align}\label{e:step2:X1:Ricci_estimate}
	&\Ric_{rr} \geq 0\, ,\notag\\
	&\Ric_{aa} = \frac{2}{\delta^2}\geq 0,\notag\\
	&\Ric_{ii} \, \geq \, \frac{2\epsilon-\epsilon^2}{h^2}\geq 0\, \, .
\end{align}

Note that outside the region $\{r\leq 10\}$ we have that $X_1$ is isometric to $S^3_\delta\times C(S^3_{1/4})$.  Let us change coordinates $r\to r-1+4(1-\epsilon)$.  Set $R_1\equiv 20$, and $U_1\equiv \{r\leq R_1\}$ in the new coordinates.
Then $X_1\setminus U_1$ is isometric to
\begin{align}
	g_1 \equiv dr^2 + \delta^2 g_{S^3}+ (r/4)^2 g_{S^3} \, .
\end{align}

\vspace{.3cm}

\subsection{Constructing $X_2$}\label{ss:step2_proof:X2}

Let us now define $X_2$ by changing the metric on $X_1$ in the region $X_1\setminus U_1$.  Our goal over the next two steps will be to turn the constant $S^3_\delta$ factor into a cone factor.  Let us begin by defining the metric $g_2$ on $X_2\setminus U_1$ through the ansatz

\begin{align}
	g_2 \equiv dr^2 + f(r)^2 g_{S^3}+ h(r)^2 g_{S^3} \, .
\end{align}

We can compute the nonzero terms of the Ricci curvature of this ansatz as

\begin{align}\label{e:step2:X2:Ricci}
	&\Ric_{rr} = -3\frac{h''}{h}-3\frac{f''}{f}\, ,\notag\\
		&\Ric_{aa} = \frac{2}{f^2}-\frac{f''}{f}-\frac{f'}{f}\Big(\,2\frac{f'}{f}+3\frac{h'}{h}\Big)\, ,\notag \\
	&\Ric_{ii} = 2\frac{1-(h')^2}{h^2}-\frac{h''}{h}-3\frac{h'}{h}\frac{f'}{f}\, .
\end{align}

Let us denote $R_2\equiv 10^4$, and let $\delta_2\leq \delta_2(\delta)$ be a constant we will choose in the next subsections.  Let us  choose a smooth $f(r)$ so that
\begin{align}
	f(r) \equiv \begin{cases}
		\delta & \text{ if } r\leq 10^{-1}R_2\, ,\notag\\
		0<R_2\,f''< \delta_2 & \text{ if } 10^{-1}R_2\leq r\leq 10^{}R_2\, ,\notag\\
		\delta+\delta_2 (r-R_2) & \text{ if } r\geq 10^{}R_2\, .\notag
	\end{cases}
\end{align}
We can build $f(r)$ by smoothing out $f(r)=\min\{\delta, \delta+\delta_2 (r-R_2)\}$ near $r=R_2$, the intersection point.\\

Let $h(r)$ be a smooth function such that
\begin{align}
	h(r) \equiv \begin{cases}
		r/4 & \text{ if } r\leq 10^{-2}R_2\, ,\notag\\
		h''<0 & \text{ if } 10^{-2}R_2\leq r\leq 10^4 R_2\, ,\notag\\
		R_2\, h''<-10^{-4} & \text{ if } 10^{-1}R_2\leq r\leq 10^{}R_2\, ,\notag\\
		R_2/4+(r-R_2)/6 & \text{ if } r\geq 10^2R_2\, .\notag\\
	\end{cases}
\end{align}
We can build such a function by similarly smoothing out $h(r)=\min\{r/4, R_2/4+(r-R_2)/6\}$ near $r=R_2$.\\

  Observe that the ending metric will have both sphere factors end with a linear growth, but the center of cones will be different for the first and second spheres.  Our construction of $X_3$ will fix this, for now let us compute the Ricci tensor of the above.\\  

We will choose $\delta_2$ in a future subsection, however we will impose the restriction now that
\begin{equation}
	\delta_2\leq 10^{-9}\delta = 10^{-6}\frac{\delta}{R_2}\, .
\end{equation}
Note that away from the interval $r\in [10^{-1}R_2,10^{}R_2]$ the Ricci curvature is nonnegative by similar computations as the last subsection.  The interval $r\in [10^{-1}R_2,10^{}R_2]$ is a little more complicated. We use that for every $r\in [10^{-2}R_2, 10R_2]$ it holds
	\begin{equation}
		\begin{split}
			&f'(r) \le 10 \delta_2
			\\
			& \delta \le f(r) \le 2\delta
			\\
			&\frac{R_2}{4} \le h(r) \le \frac{7}{4} R_2\, ,
		\end{split}
	\end{equation}
	to conclude that

\begin{align}
\Ric_{rr}&\geq \frac{3}{R_2}\Big(\frac{4\cdot 10^{-4}}{7\cdot  R_2}-\frac{\delta_2}{\delta}\Big)> 0\, ,\notag\\
\Ric_{aa}&\geq 
\frac{1}{f}\Big(\frac{1}{\delta} - \frac{\delta_2}{R_2} - 200 \frac{\delta_2^2}{\delta} - 60 \frac{\delta_2}{R_2} \Big)
>0\, ,\notag\\
\Ric_{ii}&\geq \frac{1}{h}\Big( \frac{15}{14 \cdot R_2} + \frac{10^{-4}}{R_2} - \frac{15}{2}\cdot \frac{\delta_2}{\delta} \Big)>0\, .
\end{align}

Note that if we define $U_2\equiv \{r\leq 10R_2\}$ then $X_2\setminus U_2$ is {\it almost isometric} to an annulus in $C(S^3_{1/6}\times S^3_{\delta_2})$.  Precisely, we have:

\begin{align}
	g_2 \equiv dr^2 + (\delta+\delta_2 (r-R_2))^2 g_{S^3}+ (R_2/4+(r-R_2)/6)^2 g_{S^3} \, ,
\end{align}

\vspace{.3cm}

\subsection{Constructing $X_3$}\label{ss:step2_proof:X3}

Our space $X_2$ has ended so that it looks like a cone over each of the sphere factors, however the centers of those cone points are not the same.  Specifically let us write the metric in the form
\begin{align}
	g_2 = dr^2 + \delta_2^2(r+c_2)^2 g_{S^3}+ (R_2/4+(r-R_2)/6)^2 g_{S^3} \, ,
\end{align}
where $c_2\equiv \delta_2^{-1}\delta-R_2>>R_2$ .  We will want to change the metric in this subsection so that they are both metric cones with respect to the center $c_2$.  We will change the geometry on $X_2\setminus U_2$ through the ansatz 

\begin{align}
	g_3 \equiv dr^2 + \delta_2^2(r+c_2)^2 g_{S^3}+ h(r)^2 g_{S^3} \, .
\end{align}

We can compute the nonzero terms of the Ricci curvature of this ansatz as
\begin{align}\label{e:step2:X3:Ricci}
	&\Ric_{rr} = -3\frac{h''}{h}\, ,\notag\\
		&\Ric_{aa} = \frac{2}{\delta_2^2(r+c_2)^2}-\frac{1}{r+c_2}\Big(\,\frac{2}{r+c_2}+3\frac{h'}{h}\Big)\, , \notag\\
	&\Ric_{ii} = 2\frac{1-(h')^2}{h^2}-\frac{h''}{h}-\frac{3}{r+c_2}\frac{h'}{h}\, .	
\end{align}

Let $R_3\equiv 3c_2-2R_2=R_3(\delta,\delta_2)$ be the point where the lines $R_2/4+(r-R_2)/6$ and $(r+c_2)/8$ intersect.  Then we will choose $h(r)$ as a smooth function which satisfies

\begin{align}
	h(r) \equiv \begin{cases}
		R_2/4+(r-R_2)/6 & \text{ if } r\leq 10^{-1}R_3\, ,\notag\\
		h''<0 & \text{ if } 10^{-1}R_3\leq r\leq 10^{}R_3\, ,\notag\\
		(r+c_2)/8 & \text{ if } r\geq 10 R_3\, .\notag\\
	\end{cases}
\end{align}

Note that we may pick such an $h$ by smoothing out $h(r) = \min\{R_2/4+(r-R_2)/6, (r+c_2)/8\}$, so that we may also insist that $\frac{1}{2}\min\{r/6, (r+c_2)/8\}\leq h(r)\leq \min\{r/6, (r+c_2)/8\}$.  As $h''<0$ we also have that $|h'|\leq 1/6$, and so we can compute  
\begin{align}\label{e:step2:X3:Ricci_compute}
	&\Ric_{rr} \geq 0\, ,\notag\\
	&\Ric_{aa} =
	\frac{1}{r + c_2}\Big( \frac{2}{\delta_2^2(r+c_2)}-
	\frac{2}{r+c_2}  - \frac{6}{r}   \Big)
	\geq 0\, ,\notag \\
	&\Ric_{ii} \geq 2\cdot\frac{35}{36}\frac{1}{h^2}-\frac{1}{16}\frac{1}{h^2}\geq 0\, .
\end{align}

If we define $\delta_3\equiv \delta_2$ and consider the domain $U_3\equiv \{r\leq 10R_3\}$, then we have that $X_3\setminus U_3$ is isometric to $C(S^3_{\delta_3}\times S^3_{1/8})$.  After shifting $r\to r+c_2$ this metric is written in coordinates as
\begin{align}
	g_3 \equiv dr^2 + (\delta_3 r)^2 g_{S^3}+ (r/8)^2 g_{S^3}\, .
\end{align}
Note that in these coordinates we can identify $U_3 = \{r\leq 10R_3+c_2\}\subseteq \{r\leq 11 R_3\}$ .

\vspace{.3cm}

\subsection{Refinement of Theorem \ref{t:equivariant_mapping_class:2}}\label{ss:step2_proof:refined_twisting}

Theorem \ref{t:equivariant_mapping_class:2} from Section \ref{s:equivariant_Ricci} proved the existence of a family of $S^1$-invariant metrics $(S^3\times S^3,g_t)$ which all have positive Ricci, begin and end at the standard metric, but for which the beginning and ending isometric $S^1$ actions are homotopically inequivalent.  In this subsection we would like to construct from this a refinement which will keep control for us various geometric quantities.  This refinement will be directly used in the next subsection to build $X_2$.

\begin{lemma}[Refined Equivariant Twisting]\label{l:step2:refined_twisting}
Let $(S^3\times S^3,g_0)$ be a product of two spheres with $g_0=g_{S^3_{\delta}\times S^3_{1/8}}$, and take this space to be equipped with the $(1,k)$-$S^1$ isometric Hopf action.  If $\delta\leq \delta(k)$ then there exist a family of metrics $g_t$, and a diffeomorphism $\phi:S^3\times S^3\to S^3\times S^3$ such that
\begin{enumerate}
	\item The $(1,k)$-$S^1$ action on $S^3\times S^3$ is an isometric action for all $t$ ,
	\item The Ricci curvature $\Ric_t > 6$ is uniformly positive,
	\item The volume form $dv_{g_t}=dv_{g_0}$ is a constant.
	\item $g_1 = \phi^*g_{S^3_{\delta}\times S^3_{1/8}}$ with $\phi\big(\theta \cdot_{(k,{1})}(s_1,s_2)\big)= \theta\cdot_{(1,0)}\phi(s_1,s_2)$ .
\end{enumerate}	
\end{lemma}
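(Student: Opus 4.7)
The plan is to combine a quantitatively refined, rescaled version of Theorem \ref{t:equivariant_mapping_class:2} with an equivariant Moser argument for the volume form.

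First, one reruns the five-step construction of Section \ref{s:equivariant_Ricci} with the initial data $g_{S^3_\delta\times S^3_{1/8}}$ in place of the standard unit product. The orthonormal frame \eqref{eq:step1 ortho} is replaced by one adapted to the new sphere radii (so $U_i$ is rescaled by $\delta^{-1}$ and $V_i$ by $8$); only the structure constants and the $S^1$-curvature quantities used in Steps 1--5 are correspondingly rescaled, and all subsequent estimates close for $\delta\le\delta(k)$ sufficiently small because the small first factor contributes Ricci of order $\delta^{-2}$ along its own directions, which acts as a reservoir of positive curvature for the horizontal--vertical trade-offs. By further shrinking the free parameters $f_t,a_t,\lambda_t$ of Steps 1--4 and the perturbation $\epsilon_t$ of Step 5, one can arrange that the resulting family $\tilde g_t$ satisfies any uniform lower bound $\Ric_{\tilde g_t}\ge C\tilde g_t$ for a prescribed constant $C$, with $\tilde g_0=g_{S^3_\delta\times S^3_{1/8}}$ and $\tilde g_1=\phi^*\tilde g_0$ for the diffeomorphism $\phi$ produced in Theorem \ref{t:equivariant_mapping_class:2}. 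This yields (1), (4), and a strong quantitative form of (2).

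Next, set $V(t):=\int dv_{\tilde g_t}$; since $\tilde g_1=\phi^*\tilde g_0$ we have $V(0)=V(1)$. Defining $\mu(t):=(V(0)/V(t))^{1/6}$ and $\hat g_t:=\mu(t)^2\tilde g_t$ preserves the endpoints, the $(1,k)$-isometric action, and the Ricci tensor (constant conformal scalings do not change $\Ric$), so taking the constant $C$ above large relative to $\sup_t\mu(t)^2$ gives $\Ric_{\hat g_t}>6\hat g_t$. Now the family $\omega_t:=dv_{\hat g_t}$ consists of $(1,k)$-invariant volume forms of constant total mass, so $\partial_t\omega_t$ is $S^1$-invariant and exact. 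Averaging any primitive over the $S^1$-action yields an $S^1$-invariant $(n-1)$-form $\alpha_t$ with $d\alpha_t=-\partial_t\omega_t$, and the unique invariant vector field $X_t$ solving $\iota_{X_t}\omega_t=\alpha_t$ generates, via its flow, a family of $(1,k)$-equivariant diffeomorphisms $\Psi_t$ with $\Psi_0=\mathrm{id}$ and $\Psi_t^*\omega_t=\omega_0$ by the classical Moser computation. Setting $g_t:=\Psi_t^*\hat g_t$ gives condition (3); conditions (1) and (2) are preserved because $\Psi_t$ is equivariant and Ricci is natural under pullback; and (4) holds with $\phi$ replaced by $\phi\circ\Psi_1$, whose equivariance from $(1,k)$ to $(1,0)$ follows from chaining the equivariances of $\Psi_1$ and $\phi$.

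The main obstacle is the first stage: rerunning Section \ref{s:equivariant_Ricci} with the non-standard product initial data and extracting a large quantitative Ricci lower bound from each of the squishing, connection-change, and Yang--Mills perturbation steps, rather than merely strict positivity. The subsequent volume normalization and equivariant Moser are routine once one checks that $S^1$-averaging preserves both the exactness of $\partial_t\omega_t$ and the primitive relation $d\alpha_t=-\partial_t\omega_t$.
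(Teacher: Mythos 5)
Your high-level strategy shares two of the paper's ingredients (volume renormalization and an equivariant Moser correction), but the crucial first stage takes a genuinely different route from the paper, and as written leaves a substantial gap.

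The paper does not rerun the construction of Section~\ref{s:equivariant_Ricci} with the rescaled initial data $g_{S^3_\delta\times S^3_{1/8}}$. Instead it uses Theorem~\ref{t:equivariant_mapping_class_S3xS2} (applied to the unit product) as a black box on an interior parameter range $[1/3,2/3]$, rescales by $a_r=\tilde a/\Vol(\hat g_r)^{1/6}$ with $\tilde a$ small enough to force $\Ric>6$, corrects the volume form with a gradient-flow Moser argument that is automatically $(1,k)$-equivariant by uniqueness, and then interpolates on $[0,1/3]$ and $[2/3,1]$ with explicit doubly-warped product metrics $b_r g_{S^3}+c_r g_{S^3}$ satisfying $b_r c_r=\text{const}$. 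In this scheme the hypothesis $\delta\le\delta(k)$ appears for a completely elementary reason: the gluing of the interpolation to the interior piece requires $b_{1/3}/a_{1/3}\le 1$, i.e. $\delta\le 8a_{1/3}^2$, and $a_{1/3}$ depends on $k$ through the Section~\ref{s:equivariant_Ricci} construction.

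Your Step~1 --- rerunning all five stages of Section~\ref{s:equivariant_Ricci} starting from $g_{S^3_\delta\times S^3_{1/8}}$ --- is the heart of your argument and is not justified. Rescaling the two sphere factors changes the $(1,k)$-orbit geometry, the quotient $(N,h)$, its base radius, its $S^3$-connection, and all the structure constants, so nothing about ``the estimates closing'' follows from the paper. Your heuristic that a small first factor contributes Ricci of order $\delta^{-2}$ ``as a reservoir'' is not on point: the delicate inequalities in Steps 1--5 live in the $S^1$-fiber and cross directions (e.g.\ $\Ric(T,T)=\tfrac{f_t^2}{4}|\omega_t|^2$ degenerating when $|\omega_t|\to 0$), not in the round-sphere directions where $\delta^{-2}$ helps. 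Moreover, your claim that by shrinking $f_t,a_t,\lambda_t,\epsilon_t$ you can obtain $\Ric_{\tilde g_t}\ge C\,\tilde g_t$ for a \emph{prescribed} large $C$ is inconsistent with the boundary condition $\tilde g_0=g_{S^3_\delta\times S^3_{1/8}}$: at $t=0$ the Ricci constant is pinned at $\min(2/\delta^2,128)$, so $C$ is bounded a priori and you have not established the needed inequality $C>6\sup_t\mu(t)^2$ (which requires controlling $V(t)$ along the whole construction). The paper's approach sidesteps precisely this issue by keeping the interior construction at the standard unit product, so the constant~$\tilde a$ (hence $C$) is free, and matching with the fixed endpoints is done separately by interpolation.

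Your Steps 2--3 (constant conformal volume normalization followed by $S^1$-averaged Moser) are correct and essentially equivalent to what the paper does, apart from cosmetic differences (the paper solves $\Delta f_r+\langle\nabla\ln\rho_r,\nabla f_r\rangle=-\partial_r\ln\rho_r$ and flows by $\nabla f_r$, using uniqueness rather than explicit averaging to deduce $S^1$-invariance, and it interleaves the rescaling with the Moser correction inside the definition of $\tilde g_r$). The equivariance chain $(\phi\circ\Psi_1)\bigl(\theta\cdot_{(1,k)}(s_1,s_2)\bigr)=\theta\cdot_{(1,0)}(\phi\circ\Psi_1)(s_1,s_2)$ is also fine. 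So the missing content is the first stage; as you yourself acknowledge, it is ``the main obstacle,'' and without either a careful re-derivation of Section~\ref{s:equivariant_Ricci} under the rescaled initial data or an interpolation argument of the paper's type, the proof is incomplete.
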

The construction of the above is in several steps.  To begin, let $\hat g_r$ be the metric from Theorem \ref{t:equivariant_mapping_class:2}, reparametrized so that $\hat g_{1/3} = g_{S^3\times S^3}$ is the standard metric on $S^3\times S^3$ and $\hat g_{2/3} = \varphi^*g_{S^3\times S^3}$ is the pullback of the standard metric by our required nontrivial diffeomorphism.  Let us first try and normalize this collection.  Namely, let us first consider the family of metrics
\begin{align}
	\tilde g_r = a_r \varphi_r^* \hat g_r\, ,
\end{align}
where 

\begin{equation}
	a_r := \frac{\tilde a}{\Vol(\hat g_r)^{1/6}}\, ,
\end{equation}
$\tilde a>0$ is small enough so that $\Ric_{a_r \hat g_r}>6$ uniformly in $r\in [1/3,2/3]$, and $\varphi_r:S^3\times S^3\to S^3\times S^3$ is a family of diffeomorphisms with $\varphi_{1/3}=Id$.
Observe that, by construction, $\Vol(a_r \hat g_r)$ is constant with respect to $r$.

\medskip

Let us now build the diffeomorphisms $\varphi_r:S^3\times S^3\to S^3\times S^3$.  Consider first the volume forms $\nu_r\equiv dv_{\hat g_r}$.  Recall that our remaining challenge is to force this family to be a constant.  In order to do so we will follow \cite{Moser} in a spirit related to \cite{ColdingNabercones}, with the additional subtlety that we need to work equivariantly.\\  

Even though the family $\nu_r$ is not constant, as the action is isometric we do have that $\nu_r$ is invariant under the $(1,k)$-Hopf action.  Let $\nu \equiv dv_{a_0 \hat g_0}$, which (up to multiple) is simply the standard volume form on $S^3\times S^3$.  Let us write $\nu_r \equiv \rho_r \nu_0$, so note that the function $\rho_r$ is invariant under the $(1,k)$-hopf action.  For each $r$ let us solve on $S^3\times S^3$
\begin{align}
	\Delta f_r+\langle \nabla\ln\rho_r,\nabla f_r\rangle = -\frac{\partial}{\partial r}\ln\rho_r\, .
\end{align}
Observe first that the above is smoothly solvable because $\int \frac{\partial}{\partial r}\ln\rho_r\cdot \rho_r\nu_0 = \int \frac{\partial}{\partial r}\rho_r\nu_0 = \frac{d}{dr}\int \nu_r = 0$.  The solution is a unique up to a constant, so if we assume $\int f_r \rho_r\nu_0=0$ then the solution is uniquely defined.  As the equation and right hand side commute with this action, as the solution is unique we must have that is also invariant by the $(1,k)$-Hopf action.\\

Let us now define the family of diffeomorphisms $\varphi_r:S^3\times S^3\to S^3\times S^3$ by $\varphi_{1/3}=Id$ and
\begin{align}
	\frac{d}{dr}\varphi_r(x) = \nabla f_r(\varphi_r(x))\, \quad \text{for $r\ge 1/3$} \, .
\end{align}
Note that as $\nabla f_r$ is invariant by the $(1,k)$-Hopf action we get that $\varphi_r$ commutes with the $(1,k)$-Hopf action.  We also get that
\begin{align}
	\frac{d}{dr}\varphi^*_r\nu_r = \big(\frac{\partial}{\partial r}\ln\rho_r+\Delta f_r+\langle \nabla\ln\rho_r,\nabla f_r\rangle \big)\nu_r = 0\, .
\end{align}

From this we have built a family of metrics $\tilde g_r$ such that
\begin{align}
&\tilde g_{1/3} = a_{1/3}\,g_{S^3\times S^3}\, ,\notag\\
&\tilde g_{2/3} = a_{1/3}\,\varphi_{2/3}^*\phi^*g_{S^3\times S^3}\equiv a_{1/3}\varphi^*g_{S^3\times S^3}\, ,\notag\\
	&dv_{\tilde g_r} = a_{1/3}^6 dv_{S^3\times S^3}\, ,\notag\\
	&\Ric_{\tilde g_r}>6\, .
\end{align}

We can now finish the construction with one more interpolation.  Let us define the family $g_r$ by
\begin{align}
	g_r=\begin{cases}
	b_r\, g_{S^3}+c_r \,g_{S^3}& \text{ if }0\leq r \leq 1/3\notag\\
	\frac{b_{1/3}}{a_{1/3}}\,\tilde g_r& \text{ if }1/3\leq r \leq 2/3\notag\\
	\varphi^*\Big(d_{r}\, g_{S^3}+e_{r}\, g_{S^3}\Big)& \text{ if }2/3\leq r \leq 1\, .
\end{cases}
\end{align}

Here we have that $b_r,c_r$ are smooth functions such that
\begin{align}
	b_0=1/8\, ,\;\;c_0=\delta\, ,\;\;b_{1/3}=c_{1/3}\, ,\;\;b_r\cdot c_r = const=\delta/8\, ,\notag\\
	d_{2/3}= e_{2/3} = b_{1/3}\, ,\;\;d_{1}=1/8\, ,\;\;e_{1}=\delta\, ,\;\;d_r\cdot e_r = const=\delta/8\, .
\end{align}
Note we need $\frac{b_{1/3}}{a_{1/3}}\leq 1$, so that the Ricci curvature of the above satisfies $\Ric>6$.  This becomes a restriction on $\delta$ given by
\begin{align}
	\delta\leq 8 a_{1/3}^2 \leq \delta(k)\, .
\end{align}
This finishes the construction of Lemma \ref{l:step2:refined_twisting} $\qed$\\
\vspace{.3cm}

\subsection{Constructing $X_4$}\label{ss:step2_proof:X4}

Recall that in the construction of $X_3$ we had a variable $\delta_3=\delta_2$ which had not yet been fixed.  Let us now use Lemma \ref{l:step2:refined_twisting} and Section \ref{ss:step2_proof:X2} to fix $\delta_2=\delta_2(k)$.  Recall there is a compact subset $U_3\subseteq X_3$ such that $X_3\setminus U_3$ is isometric 
\begin{align}
	g_2 = dr^2  + (\delta_3 r)^2 g_{S^3}+ (r/8)^2 g_{S^3}\, .
\end{align}

In order to construct $X_4$ let us modify the metric on $X_3\setminus U_3$ by looking at an ansatz of the form
\begin{align}
		g_4 \equiv dr^2 + h(r)^2g_r\, ,
\end{align}
where $g_r$ will be some family of metrics on $S^3\times S^3$.  Following a line of construction similar to \cite{ColdingNabercones}, if we assume that the volume forms on $g_r$ are constant, then we can compute the Ricci curvature of the above ansatz as

\begin{align}\label{e:step2:X4:Ricci}
	&\Ric_{rr} = -6\frac{h''}{h}-\frac{1}{4}g^{ab}g^{cd}g'_{ac}g'_{bd}\, ,\notag\\
	&\Ric_{ir} = \frac{1}{2}\Big(\partial_a\big(g^{ab}g'_{bi}\big)+\frac{1}{2}(g^{ab})'(\partial_i g_{ab} - g_{ib}g^{pq}\partial_a g_{pq}  )\Big)\, ,\notag\\
	&\Ric_{ij} = \Ric^g_{ij}+h^2\Big(-\frac{h''}{h}-5\big(\frac{h'}{h}\big)^2\Big)g_{ij}+\Big(-\frac{7}{2}\frac{h'}{h}g'_{ij}+\frac{1}{2}g^{ab}g'_{ai}g'_{bj}\Big)\, ,
\end{align}
where $g'_{ab} = \frac{\partial}{\partial r}g_{ab}$ and analogously for $(g^{ab})'$.

Let $10^4R_3<R_4<\infty$ be chosen momentarily.  To define $h(r)$ let us consider the three functions
\begin{align}
	&h_1(r) = r\, ,\notag\\
	&h_2(r) = 10^3 R_3+\Big(1-\frac{1}{4}+\frac{1}{16}\frac{\ln(\ln(15 R_3))}{\ln(\ln(r))} \Big)(r-10^3 R_3)\, ,\notag\\
	&h_3(r) = 10^3 R_3+\Big(1-\frac{1}{4}+\frac{1}{16}\frac{\ln(\ln(15 R_3))}{\ln(\ln(10 R_4))} \Big)(10R_4-10^3 R_3)+(r-10R_4)/2 \, .
\end{align}
Note that $h_1$ and $h_3$ are linear functions, while $h_2$ is almost linear but has a slight amount of convexity added.  Observe that all of the normalizing constants are built so that $h_1$ and $h_2$ intersect at $10^3 R_3$, while $h_2$ and $h_3$ intersect at $10 R_4$.  We will want to build a smooth $h(r)$ of the form
\begin{align}
	h(r) \equiv \begin{cases}
		h_1(r) & \text{ if } r\leq 10^2 R_3\, ,\notag\\
		h''<0 & \text{ if } 10^2 R_3\leq r\leq 10^{2}R_4\, ,\notag\\
		h_2(r)& \text{ if } 10^{4}R_3\leq r\leq R_4\, ,\notag\\
		h_3(r) & \text{ if } r\geq 10^{2}R_4\, .\notag\\
	\end{cases}
\end{align}
We can build such a function $h$ as above by smoothing out the function $h(r) = \min\{h_1(r),h_2(r),h_3(r)\}$ at the relevant intersection points. \\

We define the family of metrics $g_r$ as follows.  Consider first the metrics $\tilde g_t$ defined in Lemma \ref{l:step2:refined_twisting}. In the range $r\leq 10^4 R_3$ let $g_r = \tilde g_0$, and in the range $r\geq R_4$ let $g_r = \tilde g_1$.  In these two ranges the nonnegativity of the Ricci curvature follows by computations analogous to the previous subsections.\\

In the range $r\in [10^4 R_3,R_4]$ we proceed as follows.  Let $t:[10^4R_3, R_4]\to [0,1]$ be a smooth function such that
\begin{align}
	t(r) = \frac{\ln\ln\ln(r) - \ln\ln\ln(10^4 R_3)}{\ln\ln\ln(R_4) - \ln\ln\ln(10^4 R_3)}\, .
\end{align}
Note that $t(r)$ has the property that $t(10^4R_3) = 0$ and $t(R_4) = 1$.  We define $g_r = \tilde g_{t(r)}$.  Note that we have not yet defined $R_4$.  \\

Let $M>0$ be such that
\begin{equation}
	 | \nabla\tilde g|_{\tilde g} + |\tilde g'|_{\tilde g} + | \nabla \tilde g'|_{\tilde g}
	\le M \, , \quad \text{uniformly in $t\in [0,1]$}\, .
\end{equation}
Thus we can estimate
\begin{align}
	|g'|_g + |\nabla g'|_g & \le \frac{1}{r\cdot \ln(r)\cdot \ln\ln(r)} \cdot  \frac{1}{\ln \ln \ln (R_4) - \ln \ln \ln (10^4 R_3)}( |\tilde g'|_{\tilde g} + |\tilde g'|_{\tilde g})\notag
	\\& \le \frac{M}{r\cdot \ln(r)\cdot \ln\ln(r)} \cdot  \frac{1}{\ln \ln \ln (R_4) - \ln \ln \ln (10^4 R_3)} \, .
\end{align}
Notice that $\frac{r}{2} \le h(r) \le r$, and $0\le h'(r) \le 1$ for every $r\in [10^4 R_3, R_4]$, hence
\begin{equation}
	\begin{split}
		&\frac{h'(r)}{h(r)} \le \frac{1}{r}
		\\
		&\frac{h''(r)}{h(r)} \le - \frac{1}{100} \frac{\ln \ln (15R_3)}{r^2 \cdot \ln(r)\cdot (\ln \ln(r))^2} \, ,
	\end{split}
\end{equation}
for every $r\in [10^4 R_3, R_4]$.

From the expression \eqref{e:step2:X4:Ricci}, we can estimate the diagonal Ricci terms
\begin{equation}
	\begin{split}
	\Ric_{rr} \ge  -6\frac{h''}{h} - |g'|_g^2
	 \ge \frac{1}{100} \frac{\ln \ln (15R_3)}{r^2 \cdot \ln(r)\cdot (\ln \ln(r))^2}\, ,
	\end{split}
\end{equation}
\begin{equation}
	\begin{split}
		\Ric_{ii} &\ge 6  - h h'' - 5 (h')^2 - \frac{7}{2r} |g'|_{g} - \frac{1}{2} |g'|_{g}^2
		\ge	\frac{1}{2}\, ,
	\end{split}
\end{equation}
provided $R_4= R_4(M)$ is chosen big enough.

The cross term has the estimate,
\begin{equation}
	\begin{split}
		|\Ric_{ir}| \le 2 (|g'|_{g} + |\nabla g'|_{g})(1 + |\nabla g|_{g})
		&\le 
		\frac{2(M+1)^2}{r\cdot \ln(r)\cdot \ln\ln(r)} \cdot  \frac{1}{\ln \ln \ln (R_4) - \ln \ln \ln (10^4 R_3)}
		\\& 
		\le \frac{1}{r\cdot \ln(r)\cdot \ln\ln(r)}\, ,
	\end{split}
\end{equation}
again, assuming $R_4= R_4(M)$ big enough.\\

Note that the negativity of the cross term $\Ric_{ir}$ dominates the positivity of the radial $\Ric_{rr}$.  However, it is itself dominated by the positivity of the cross section $\Ric_{ii}$.  Consider now a direction $v=a\partial_r +\frac{b}{r}\partial_i$, then we can estimate
\begin{align}
	\Ric_{vv}\geq \frac{a^2}{100} \frac{\ln \ln (15R_3)}{r^2 \cdot \ln(r)\cdot (\ln \ln(r))^2}-2\frac{ab}{r^2\cdot \ln(r)\cdot \ln\ln(r)}+\frac{b^2}{2r^2}\, .
\end{align}
Let us split the above into two cases.  If $b\leq \frac{a}{10^3\ln\ln r}$ then the negative term is dominated by the first term when $r\geq R_4(R_3,M)$.  On the other hand, if $b\geq \frac{a}{10^3\ln\ln r}$ then the negative middle term is dominated by the last term when $r\geq R_4(R_3,M)$.  In any situation we then see for $r\geq R_4(R_3,M)$ that $\Ric>0$ is positive.\\

Note that for $U_4\equiv \{r\leq 10^2 R_4\}$ we have that $X_4\setminus U_4$ is isometric to $C(S^3_{\delta_3/2}\times S^3_{1/16})\equiv C(S^3_{\delta_4}\times S^3_{1/16})$:
\begin{align}
	g_3\equiv dr^2+(\delta_4 r)^2 g_{S^3} +(r/16)^2 g_{S^3}\, .
\end{align}

\vspace{.3cm}

\subsection{Constructing $X_5$}\label{ss:step2_proof:X5}

To construct $X_5$ we want to modify $X_4$ on the neighborhood $X_4\setminus U_4$.  The goal will be to end so that the second $S^3$ factor is growing at a slow polynomial rate.  Our ansatz will be of the form
\begin{align}
	dr^2+f(r)^2 g_{S^3}+(r/16)^2g_{S^3}\, .
\end{align}

The Ricci curvature of this ansatz may be computed 

\begin{align}\label{e:step2:X5:Ricci}
	&\Ric_{rr} = -3\frac{f''}{f}\, ,\notag\\
	&\Ric_{aa} = \frac{2}{f^2}-\frac{f''}{f}-\frac{f'}{f}\Big(\,2\frac{f'}{f}+\frac{3}{r}\Big)\, ,\notag\\
	&\Ric_{ii} = 2\frac{16^2-1}{r^2}-\frac{3}{r}\frac{f'}{f}\, .
\end{align}

For $0<\alpha<<1$, which will be chosen in the next construction, let us consider a function $f$ of the form
\begin{align}
	f(r) \equiv \begin{cases}
		\delta_4 r & \text{ if } r\leq 10R_4\, ,\notag\\
		f''<0 & \text{ if } 10R_4\leq r\leq 10^{3}R_4\, ,\notag\\
		10^2\delta_4R_4\,\Big(\frac{r}{10^2 R_4}\Big)^\alpha & \text{ if } r\geq 10^{3}R_4\, .\notag\\
	\end{cases}
\end{align}
To build such an $f$ one can smooth the function $f\equiv \min\Big\{\delta_4 r,10^2\delta_4R_4\,\Big(\frac{r}{10^2 R_4}\Big)^\alpha \Big\}$.  Note that these two functions agree at $10^2 R_4$ by construction.  If we plug this into \eqref{e:step2:X5:Ricci} we see that the resulting space has $\Ric\geq 0$.\\

If we define $R_5\equiv 10^4R_5$, $c_5\equiv 10^2\delta_4R_4\,\Big(\frac{1}{10^2 R_4}\Big)^\alpha$ and $U_5\equiv \{r\leq R_5\}$, then $X_5\setminus U_5$ is isometric to the warped product
\begin{align}
	g_5 = dr^2 +\big( c_5 r^\alpha \big)^2 g_{S^3}+ (r/16)^2 g_{S^3}\, .
\end{align}

\vspace{.3cm}

\subsection{Constructing $X_6$}\label{ss:step2_proof:X6}

The next step of the construction is dedicated to increasing the size of the cone $S^3$ factor until we are again geometrically close to $\dR^4$.  We will construct $X_6$ by modifying $X_5$ on the neighborhood $X_5\setminus U_5$. The ansatz of our new metric will take the form 
\begin{align}
	g_6 = dr^2 +\big( c_5 r^\alpha \big)^2  g_{S^3}+ h(r)^2 g_{S^3}\, .
\end{align}

The nonzero terms of the Ricci tensor may be computed as
\begin{align}\label{e:step2:X6:Ricci}
	&\Ric_{rr} = -3\frac{h''}{h}+\frac{3\alpha(1-\alpha)}{r^2}\, ,\notag\\
	&\Ric_{aa} = \frac{2}{\big( {c_5} r^\alpha \big)^2 }+\frac{\alpha(1-\alpha)}{r^2}-\frac{\alpha}{r}\Big(\,\frac{2\alpha}{r}+{3}\frac{h'}{h}\Big)\, ,\notag \\
	&\Ric_{ii} = {2}\frac{1-(h')^2}{h^2}-\frac{h''}{h}-\frac{{3}\alpha}{r}\frac{h'}{h}\, .
\end{align}

Recall the construction of $X_5$ depended on the parameter $\alpha>0$, let us now choose $\alpha = 10^{-3}\hat\epsilon$.  Then for $R_6=R_6(\hat\epsilon)$ we can choose a smooth function $h(r)$ so that it satisfies
\begin{align}
	h(r) \equiv \begin{cases}
		r/16 & \text{ if } r\leq 10R_5\, ,\notag\\
		{|h'|<(1-10^{-1}\hat\epsilon),\; |r \, h''|< 10^{-10}\hat\epsilon}
			 & \text{ if } 10R_5\leq r\leq 10^{-1}R_6\, ,\notag\\
		(1-\hat\epsilon)r& \text{ if } r\geq 10^{-1}R_6\, .\notag\\
	\end{cases}
\end{align}

If we plug this into \eqref{e:step2:X5:Ricci} then we see that $\Ric\geq 0$.  If we let $U_6\equiv \{r\leq R_6\}$ then we see that $X_6\setminus U_6$ is isometric to the warped product
\begin{align}
	g_6 = dr^2 +\big( c_5 r^\alpha \big)^2  g_{S^3}+ (1-\hat\epsilon)^2r^2 g_{S^3}\, .
\end{align}

\vspace{.3cm}

\subsection{Constructing $X=X_7$}\label{ss:step2_proof:X7}

We are now in a position to finish the construction of $X$ and prove Proposition \ref{p:step2:2}.  The last step of the construction just needs to flatten out the first $S^3$ factor back into a cross product.  Recall that we have built $X_6$ and that outside of $U_6$ we have that it is isometric to 
\begin{align}
	g_6 = dr^2 +\big( c_5 r^\alpha \big)^2  g_{S^3}+ (1-\hat\epsilon)^2r^2 g_{S^3}\, .
\end{align}

We will look to alter this metric by looking for an ansatz of the form

\begin{align}
	g_7 = dr^2 +f(r)^2  g_{S^3}+ (1-\hat\epsilon)^2r^2 g_{S^3}\, .
\end{align}

The nonzero Ricci curvatures of this ansatz can be computed
\begin{align}\label{e:step2:X7:Ricci}
	&\Ric_{rr} = -3\frac{f''}{f}\, ,\notag\\
		&\Ric_{aa} = \frac{2}{f^2}-\frac{f''}{f}-\frac{f'}{f}\Big(\,2\frac{f'}{f}+\frac{3}{r}\Big)\, ,\notag\\
	&\Ric_{ii} = \Big(\frac{1}{(1-\hat \epsilon)^2}-1\Big)\frac{2}{r^2}-\frac{3}{r}\frac{f'}{f}\, .
\end{align}

We will choose a smooth function $f(r)$ of the form
\begin{align}
	f(r) \equiv \begin{cases}
		c_5 r^\alpha  & \text{ if } r\leq 10R_6\, ,\notag\\
		f''<0 & \text{ if } 10R_5\leq r\leq 10^{4}R_6\, ,\notag\\
		\hat\delta & \text{ if } r\geq 10^{4}R_6\, .
	\end{cases}
\end{align}

If $\hat\delta = \hat\delta(c_5,R_6)$ then we can build such a function $f$ by smoothing the function  $f(r) = \min\{c_5 r^\alpha , c_5 (10^3 R_6)^\alpha\}\equiv \min\{c_5 r^\alpha , \hat\delta\}$ .  By plugging this into \eqref{e:step2:X7:Ricci} we see that $\Ric\geq 0$.\\

This completes the construction of $X$, let us finally check that all the requirements in Proposition \ref{p:step2:2} are satisfied.

Globally on $X$, we have a doubly warped product metric
\begin{equation}
g = 	dr^2 + f(r)^2g_{S^3} + h(r)^2 g_{S^3}
\end{equation}
hence, the $(1,k)$-Hopf action on $S^3\times S^3$ induces an isometric $S^1$ action on $X$. By construction, in $U=\{r \le 1\}$ the metric is $g = dr^2 + \delta^2g_{S^3}	+ (1-\epsilon)^2 r^2 g_{S^3}$. 

If we let $R_7\equiv 10^5R_6$ and $U_7\equiv \{r\leq R_7\}$ then we see that $X_7\setminus U_7$ is isometric to
\begin{align}
	g_7 = dr^2 +\hat\delta^2 g_{S^3}+ (1-\hat\epsilon)^2r^2 g_{S^3}\, ,
\end{align}
through a map induced by $\phi$, the diffeomorphism built in Lemma \ref{l:step2:refined_twisting}. Hence, it is immediate to see that in these coordinates the $S^1$ action on $X$ identifies with the $(1,0)$-Hopf action.
$\qed$

\vspace{.5cm}

\section{Step 3: Extending the Action}\label{s:Step3_proof}

In this Section we focus on the third step of the construction, which was outlined in Section \ref{ss:step 3 outline}.  The primary goal of this step of the construction is to extend the $\Gamma_j$ action on $M_j$ to a $\Gamma_{j+1}$ action on $M_{j+1}$.  Recall that $\Gamma_{j+1}$ is generated by $\gamma_{j+1}$ such that $\gamma_{j+1}^{k_{j+1}}=\gamma_j \in \Gamma_j$.  

In order to accomplish this we will begin with a model space $\cB_{j+1}=\cB(\epsilon_{j+1},\delta_{j+1})\approx S^3\times \dR^4$, pluck out $k_{j+1}$ balls $S^3\times D^4$ and glue in copies of $M_j$.  To do this precisely, and in order to preserve the geometry in the process, the main technical Proposition we need to prove in this Section is the following:\\

\begin{proposition}[Step 3:  Action Extension]\label{p:step3:2}
	Let $\epsilon,\epsilon',\delta>0$ with $0<\epsilon-\epsilon'\le \frac{\epsilon}{10^2 }$, and let $\hat\Gamma\leq \dQ/\dZ\subseteq S^1$ be a finite subgroup with $\Gamma = \langle \gamma, \hat\Gamma\rangle$ such that $\hat\gamma\equiv \gamma^k$ is the generator of $\hat\Gamma$.  Then for $\hat\epsilon\leq \hat\epsilon(\epsilon,\epsilon')$ there exists a pointed space $(\tilde \cA,p)$, isometric to a smoooth Riemannian manifold with $\Ric\ge 0$ away from $k+1$ singular three spheres, with an isometric and free action by $\Gamma$ such that
\begin{enumerate}
	\item There exists $\Gamma$-invariant set $B_{10^{-1}}(p)\subseteq U'\subseteq B_{10}(p)$ which is isometric to $S^3_\delta\times B_{1}(0)\subseteq S^3_\delta\times C(S^3_{1-\epsilon'})$ and such that $\Gamma$ is induced by the $(1,k)$-Hopf action on $S^3_\delta\times S^3_{1-\epsilon'}$ ,
	\item There exists $\Gamma$-invariant set $B_{10^3 k}(p)\subseteq U\subseteq B_{10^5 k}(p)$ such that $\tilde \cA\setminus U$ is isometric to $ S^3_\delta \times A_{10^4k,\infty}(0)     \subseteq S^3_\delta\times C(S^3_{1-\epsilon})$ and such that $\Gamma$ is induced by the $(1,k)$-Hopf action on $S^3_\delta\times S^3_{1-\epsilon}$ 
	\item There exists $\hat\Gamma$-invariant sets $S^3_\delta\times B_{2^{-1}}(x^a)\subseteq V^a\subseteq S^3_\delta\times B_{2}(x^a)$ with $d(S^3_\delta\times \{x^a\},S^3_\delta\times \{p\})=10^2 k$ which are isometric to $S^3_\delta\times B_{1}(0)\subseteq S^3_\delta\times C(S^3_{1-\hat\epsilon})$ and such that $\hat\Gamma$ is induced by the $(1,0)$-Hopf action on $S^3_\delta\times S^3_{1-\hat\epsilon}$ .
\end{enumerate}
\end{proposition}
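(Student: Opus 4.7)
My plan follows the outline of Section \ref{ss:step 3 outline}. There are three intertwined tasks: (i) construct an equivariant base model realizing the cone geometries of items (1)--(2) with the correct $\Gamma$-action; (ii) perform an equivariant surgery at $k$ sites, with the twist $\varphi^a = \gamma^a\cdot\varphi^0$ that lifts the local $\hat\Gamma$-action on the plugs to a global $\Gamma$-action on $\tilde{\mathcal{A}}$; and (iii) control $\Ric\geq 0$ throughout, so that the only singularities of the final metric are the $k+1$ cone vertices.

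\emph{Base model.} Using the doubly warped ansatz $g = dr^2 + \delta^2 g_{S^3} + h(r)^2 g_{S^3}$ on $S^3\times \dR^4$, I would pick $h$ to be concave with $h(r) = (1-\epsilon')r$ for small $r$ and smoothly transitioning to $h(r) = (1-\epsilon)(r - r_\ast) + h(r_\ast)$ for large $r$. Since $\epsilon > \epsilon'$, this can be arranged with $h''\leq 0$ and $|h'|\leq 1$ throughout, yielding $\Ric\geq 0$ exactly as in the proof of Proposition \ref{p:step1}. The cone vertex $S^3_\delta\times\{0\}$ (which is singular because $\epsilon'>0$) contributes the ``$+1$'' singular three sphere. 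The natural isometric $T^2$-action on the two $S^3$ factors, restricted via the $(1,|\gamma|/k)$-Hopf embedding, supplies the $\Gamma$-action required by items (1) and (2); the hypothesis $\epsilon-\epsilon'\leq \epsilon/10^2$ ensures enough room in the interpolation zone.

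\emph{Surgery sites and equivariance.} I would place $x^0$ at radial distance $10^2 k$ from the cone vertex in the outer region and define $x^a$ as the Hopf rotate of $x^0$ by $2\pi a/k$. Then $\gamma$ permutes the sites cyclically and rotates $S^3_\delta$ by $2\pi/|\gamma|$, while $\hat\Gamma$ fixes each $x^a$. Removing pairwise disjoint $\hat\Gamma$-invariant neighborhoods $\mathcal{N}^a \supseteq S^3_\delta\times B_2(x^a)$, I would glue a plug $\hat{\mathcal{A}}^a$ into each hole along the boundary via a diffeomorphism $\varphi^a = \gamma^a\cdot \varphi^0$, with $\varphi^0$ an $\hat\Gamma$-equivariant identification $\partial \hat{\mathcal{A}}^0 \to \partial \mathcal{N}^0$. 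The relation $\varphi^k = \hat\gamma\cdot \varphi^0$ then glues the per-plug $\hat\Gamma$-actions into a consistent free $\Gamma$-action on $\tilde{\mathcal{A}}$, exactly as sketched in Section \ref{ss:step 3 outline}.

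\emph{Plug construction, and the main obstacle.} The central technical point is building each plug $\hat{\mathcal{A}}^a$: it must contain a region $V^a$ exactly isometric to $S^3_\delta\times B_1(0)\subseteq S^3_\delta\times C(S^3_{1-\hat\epsilon})$, whose cone vertex contributes one of the $k$ remaining singular three spheres, and must then smoothly transition out to a boundary that is exactly isometric to $\partial\mathcal{N}^a$, so that the gluing creates no further singularity. The base near $x^a$ is locally flat (the cone $C(S^3_{1-\epsilon})$ is flat away from its vertex), while the plug interior has cone slope $1-\hat\epsilon$, so the transition must push the radial slope from $1-\hat\epsilon$ up to $1$. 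A single warped factor cannot do this under the constraint $\Ric\geq 0$, since $\Ric_{rr}=-3h''/h\geq 0$ forbids the required convexity of $h$. To get around this, I would employ a doubly warped ansatz $dr^2+f(r)^2 g_{S^3}+h(r)^2 g_{S^3}$ in the transition zone, following Sections \ref{ss:step2_proof:X2}--\ref{ss:step2_proof:X3}, and use a concave dip of $f$ to supply enough positive contribution to $\Ric_{rr} = -3f''/f - 3h''/h$ to compensate for $h''>0$. The constraint $\hat\epsilon\leq \hat\epsilon(\epsilon,\epsilon')$ is what provides the quantitative room to close this argument and to enforce, on the nose, that the outer boundary of the plug is isometric to $\partial \mathcal{N}^a$ rather than merely $C^0$-close to it, which is what keeps the final singular set to exactly $k+1$ three spheres.
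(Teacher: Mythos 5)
Your overall scaffolding (background cone with $\Gamma$-action, $k$ surgery sites at the Hopf rotates $x^a$ of $x^0$, twisted gluing $\varphi^a = \gamma^a\cdot\varphi^0$ yielding the $\Gamma$-extension) is the same as the paper's, but the central technical step --- achieving an exact isometric gluing between the background and the plugs --- is not handled, and the analysis you give of where the difficulty lives is off base.

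First, a factual error: you assert that $C(S^3_{1-\epsilon})$ is flat away from its vertex. It is not; with $h(r)=(1-\epsilon)r$ one has $\Ric_{ii} = 2(1-(1-\epsilon)^2)/h^2 > 0$ in the cross-sectional directions. More importantly, because of this the geodesic sphere $\partial B_{\tilde r}(x^a)$ around an off-vertex point $x^a$ of $C(S^3_{1-\epsilon})$ is \emph{not} a round $3$-sphere, and there is no warped-product coordinate system centered at $x^a$ in which it becomes one. The difficulty is therefore not a slope mismatch from $1-\hat\epsilon$ to $1$ that a doubly-warped concave dip could absorb; it is that you have no canonical form for the hole boundary $\partial\mathcal{N}^a$ to which a warped plug could be matched exactly, and hence no way to ``enforce on the nose'' the isometric gluing you correctly identify as necessary.

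The paper resolves this with a device your proposal omits entirely: both the ambient space $\tilde{\mathcal{A}}_1$ and the plug $\hat{\mathcal{A}}$ are built, via a single warped factor, so that in the matching regime the cross-section warping function takes the spherical form $h(r) = R\sin\bigl(R^{-1}(r - r_R)\bigr)$ for a common $R = R(\epsilon,\epsilon',k)$, so that an annulus of $\tilde{\mathcal{A}}_1$ around radius $10^2 k$ and a collar of $\partial\hat{\mathcal{A}}$ are each isometric to a product of $S^3_\delta$ with an annulus in the \emph{round} $4$-sphere $S^4_R$. Since $S^4_R$ is homogeneous, the ball $B_{\tilde r}(x^a)\subset S^4_R$ removed from the ambient space has the same boundary geometry (and indeed collar geometry) as the boundary of the plug, which is precisely what makes the gluing isometric and preserves $\Ric\geq 0$ with no new singularities. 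The constraint $\hat\epsilon\le\hat\epsilon(\epsilon,\epsilon')$ enters because the plug must be bent to the same sphere radius $R$, and this determines how small $\hat\epsilon$ must be relative to the already-fixed $R$. Without this sphere-cap mechanism, your plug construction does not close, regardless of how you trade Ricci curvature between the two warped factors.
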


The construction will come in three steps.  We will begin with $\tilde \cA_0 =S^3_\delta\times C(S^3_{1-\epsilon'})$, which we see by $(1)$ is what our space should look like on small scales.  In Section \ref{ss:constructingA1} we will construct $\tilde \cA_1$ by adding a bend to $\tilde \cA_0$.  The effect of this will be that on large scales the space looks like $S^3_\delta\times C(S^3_{1-\epsilon})$, however on some middle scale $\tilde \cA_1$ will be isometric to an annulus in a $4$-sphere $S^4_R$, where $R=R(\epsilon',\epsilon,k)$ is potentially very large.\\

In Section \ref{ss:constructionhatA} we will construct the gluing pieces $\hat \cA$ by beginning with $\hat \cA_0 =S^3_\delta\times C(S^3_{1-\hat\epsilon})$ and bending it in an analogous manner to which we built $\tilde \cA_1$.  However, $\hat \cA$ will have boundary and near the boundary will be isometric to a small annulus in the $4$-sphere $S^4_R$.\\

We see we are now able to glue copies of $\hat \cA$ into $\tilde \cA_1$ as an open set near boundary of $\hat \cA$ is isometric to a region in $\tilde \cA_1$.  We will want to glue $k$ copies of $\hat \cA^a=\hat \cA$ into $\tilde \cA_1$ in order to complete our construction, however as in the discussion in Section \ref{ss:step 3 outline} we need to be careful about the choice of gluing maps.  This will be done in Section \ref{ss:constrtildecA}.

\vspace{.3cm}

\subsection{Constructing $\cA_1$}\label{ss:constructingA1}

Let us begin with $\tilde\cA_0 \equiv S^3_\delta \times C(S^3_{1-\epsilon'})$, which geometrically has the metric 
\begin{align}
	g_0 \equiv dr^2 + \delta^2 g_{S^3}+ (1-\epsilon')^2 r^2 g_{S^3} \, .
\end{align}

Let $U_0 = \{r\leq 1\}$, then we will build $\tilde\cA_1$ by modifying the above metric on the region $\tilde\cA_0\setminus U_0$.  We will look for a metric which is of the form

\begin{align}
	g_1 \equiv dr^2 + \delta^2 g_{S^3}+ h(r)^2 g_{S^3} \, .
\end{align}

The nonzero Ricci curvatures of a warped metric as above are

\begin{align}\label{e:step3:A1:Ricci}
	&\Ric_{rr} = -3\frac{h''}{h}\, ,\notag\\
	&\Ric_{aa} = \frac{2}{\delta^2}\, , \notag\\
	&\Ric_{ii} = 2 \frac{1-(h')^2}{h^2}-\frac{h''}{h}\, .
\end{align}

In order to choose our warping function $h(r)$ let us begin by defining the following three functions:
\begin{align}
	&h_1(r)\equiv (1-\epsilon')r\, ,\notag\\
	&h_2(r)\equiv R \sin\big(R^{-1}(r-r_R)\big)\, ,\notag\\
	&h_3(r)\equiv (1-\epsilon)(r+r_\epsilon)\, .
\end{align}

Our goal will be to show for appropriate constants $R, r_R, r_\epsilon$, $\hat r$ and $r_1\in [10k, 10^3k]$ that we can choose $h(r)$ in the form
\begin{align}
	h(r) \equiv \begin{cases}
		h_1(r) & \text{ if } r\leq 10\, ,\notag\\
		h''<0 & \text{ if } 10\leq r\leq 10^{4}k\, ,\notag\\
		h_2(r)& \text{ if } r_1 \leq r\leq r_1 +  \hat r\, ,\notag\\
		h_3(r) & \text{ if } r\geq 10^{4}k\, .\notag\\
	\end{cases}
\end{align}

If we have an $h(r)$ then by \eqref{e:step3:A1:Ricci} we have that
\begin{align}\label{e:step3:A1:Ricci_compute}
	&\Ric_{rr} \geq 0\, ,\;\;
	\Ric_{aa} = \frac{2}{\delta^2}>0 \, ,\;\;
	\Ric_{ii} \geq \frac{1-(1 - \epsilon')^2}{h^2}>0
	\, .
\end{align}

Recall now that $\epsilon'<\epsilon$ have already been fixed, and the lines $h_1(r)$ and $h_3(r)$ must intersect at a unique point.  Let us choose $r_\epsilon$ uniquely so that the point of intersection is at $10^2 k$.  In particular, we can solve for $r_\epsilon$ as
\begin{align}
	r_\epsilon \equiv \frac{\epsilon-\epsilon'}{1-\epsilon}10^2 k\, .
\end{align}
With $r_\epsilon$ fixed, let us observe that for any $R\geq 0$ there is a unique smallest $r_R\in (0, 2\pi R]$ such that $h_2(r)\leq \min\{h_1(r),h_3(r)\}$ for every $r>0$.  Note for this $r_R$ that $h_2(r)$ intersects $h_1(r)$ and $h_3(r)$ at most once, but must intersect one of them (otherwise $h_2<\min\{h_1,h_3\}$ and we could have decreased $r_R$).  On the other hand, note that for $R$ small we must have that $h_2(r)$ intersects $h_1(r)$, while for $R$ large we must have that $h_2(r)$ intersects $h_3(r)$.  We can then also find a unique value of $R\, =R(\epsilon,\epsilon',k)$ for which $h_2$ intersects both $h_1$ and $h_3$.  Let us fix this as our value of $R$ and hence $r_R$, and let us call these intersection points $s_1 < 10^2 k< s_2$ respectively.\\

In order to estimate the value of $r_R$ let us observe that $h_2(r)\leq r-r_R$, and as such we get that
\begin{align}
	&(1-\epsilon')s_1 = h_1(s_1)=h_2(s_1)\leq s_1-r_R\notag\\
	&\implies r_R\leq \epsilon' s_1\leq 10^2 k\,\epsilon' \, .
\end{align}

Let us observe that $\dot h_2(s_1)=\dot h_1(s_1)$ and $\dot h_2(s_3)=\dot h_3(s_3)$ to get the relations
\begin{align}
	&\cos(R^{-1}(s_1-r_R)) = 1-\epsilon'\, ,\notag\\
	&\cos(R^{-1}(s_2-r_R)) = 1-\epsilon\, ,\notag\\
	\implies& \big|s_1-r_R-\sqrt{2\epsilon'}R\big| \leq 10\epsilon' R\notag\\
	& \big|s_2-r_R-\sqrt{2\epsilon}R\big| \leq 10\epsilon R\, ,\notag\\
	\implies&|s_1-\sqrt{2\epsilon'}R|\leq 10(10 k +R)\epsilon'\, ,\notag\\
	&|s_2-\sqrt{2\epsilon}R|\leq 10(10 k +R)\epsilon\, ,
\end{align}
where we used the Taylor expansion of $\cos(x)$ and the fact that $s_1-r_R, s_2 - r_R \in (0, 2\pi R]$.\\

Using that $s_1\leq 10^2 k\leq s_2$, this gives the estimate on $R$:

\begin{align}
	|10^2 k - \sqrt{2\epsilon}R | 
	&\leq 
	 \max\{ |s_1 - \sqrt{2\epsilon}R |, |s_2 - \sqrt{2\epsilon}R|  \}
	\\& \le \sqrt{2\epsilon}R
	\Big(\frac{\sqrt{\epsilon} - \sqrt{\epsilon'}}{\sqrt{\epsilon}}\Big) + 20(10k + R)\epsilon \, .
\end{align}
Hence, if $\sqrt{\epsilon} - \sqrt{\epsilon'} \le 10^{-1}\sqrt{\epsilon}$, we can deduce
\begin{equation}
	\big|R - \frac{10^2 k}{\sqrt{2\epsilon}}\,\big| \leq 10^3 k\Big( \sqrt{\epsilon} + \frac{\sqrt{\epsilon} - \sqrt{\epsilon'}}{\sqrt{\epsilon}} \Big)\, .
\end{equation}

From this we get the estimate
	\begin{align}
		\big|(s_2 - s_1) - \frac{\sqrt{\epsilon}-\sqrt{\epsilon'}}{\sqrt{\epsilon}}10^2 k\big|
		\leq 10^4k \sqrt{\epsilon} \Big( 1  + \Big(\frac{ \sqrt{\epsilon} - \sqrt{\epsilon'}}{\sqrt{\epsilon}}\Big)^2 \Big)\, .
	\end{align}
Hence, we can define $\hat r := \frac{\sqrt{\epsilon}-\sqrt{\epsilon'}}{100\sqrt{\epsilon}}10^2 k \le \frac{1}{10}\cdot 10^2 k$.\\

Then we may build $h(r)$ by smoothing out 
\begin{align}
	h(r) \equiv \begin{cases}
		h_1(r) & \text{ if } r\leq s_1\, ,\notag\\
		h_2(r)& \text{ if } s_1\leq r\leq s_2\, ,\notag\\
		h_3(r) & \text{ if } r\geq s_2\, .
	\end{cases}
\end{align}

\vspace{.3cm}
\subsection{Construction of $\hat A$}\label{ss:constructionhatA}

Let us begin with $\hat \cA_0\equiv S^3_\delta\times C(S^3_{1-\hat\epsilon})$ on the domain $\hat U_0\equiv \{r\leq \hat r\}$, where  $\hat r\equiv  \frac{\sqrt{\epsilon}-\sqrt{\epsilon'}}{100\sqrt{\epsilon}}10^2 k$ was defined in the previous section.  The metric on $\hat\cA_0$ may be written
\begin{align}
	\hat g_0 \equiv dr^2 +\delta^2 g_{S^3}+ (1-\hat\epsilon)^2 r^2 g_{S^3}\, .
\end{align}

In order to construct $\hat\cA$ we will need to alter the geometry by looking for a metric of the form
\begin{align}
	\hat g \equiv dr^2 +\delta^2 g_{S^3}+ h(r)^2 g_{S^3}\, .
\end{align}

We will build $h(r)$ in a manner analogous to the previous subsection.  Let us start by looking at the functions
\begin{align}
	&h_1(r)\equiv (1-\hat\epsilon)r\, ,\notag\\
	&h_2(r)\equiv R \sin\big(R^{-1}(r-\hat r_R)\big)\, ,
\end{align}
where $R\, {=R(\epsilon,\epsilon',k)}>0$ has been fixed in the previous subsection.  Observe that $h_1$ and $h_2$ will intersect twice for $\hat r_R$ small, and for $\hat r_R$ large they will not intersect.  Let us choose $\hat r_R$ uniquely so that they intersect at $\hat s$ precisely once.  Note that at this point of intersection $\hat s$ we will then have $\dot h_1(\hat s) = \dot h_2(\hat s)$, which is the equation
\begin{align}
	&\cos(R^{-1}(\hat s-\hat r_R))= 1-\hat\epsilon\, ,\notag\\
	&\implies \big|\hat s-\hat r_R - \sqrt{2\hat\epsilon} R\big|\leq \hat\epsilon R\, .
\end{align}
Additionally, using that $h_2(r)\leq r-\hat r_R$ we have the inequality
\begin{align}
	&(1-\hat\epsilon)\hat s = h_1(\hat s)=h_2(\hat s)\leq \hat s-\hat r_R\, ,\notag\\
	&\implies \hat r_R\leq \hat \epsilon \hat s\, .
\end{align}
Combining the last two estimates we conclude
\begin{align}
	&\big|\hat s- \sqrt{2\hat\epsilon} R\big|\leq (\hat\epsilon+\sqrt{2}\hat\epsilon^{3/2})R\, ,\notag\\
	&\hat r_R\leq 2\hat\epsilon R\, .
\end{align}

Let us now choose $\hat\epsilon\leq \hat\epsilon(R)\leq \hat\epsilon(\epsilon,\epsilon')$ so that $\hat s\leq \frac{1}{2}\hat r$.  Then we can define $h(r)$ for $r\leq 2\hat s$ by smoothing 

\begin{align}
	h(r) \equiv \begin{cases}
		h_1(r) & \text{ if } r\leq \hat s\, ,\notag\\
		h_2(r)& \text{ if } \hat s\leq r\leq 2\hat s\, .
	\end{cases}
\end{align}

In particular, for $\hat \cA = \{r\leq 2\hat s\}$ we see that a neighborhood of the boundary is isometric to the product of $S^3_{\delta}$ with an annulus in $S^4_R$.  The verification that $\Ric\ge 0$ with this choice of $h$ is completely analogous to the one discussed in the previous subsection, using \eqref{e:step3:A1:Ricci}.

\vspace{.3cm}
\subsection{Constructing $\tilde A$}\label{ss:constrtildecA}

As our final step let us now denote $\hat\cA^a$ for $a=0,\ldots,k-1$ as $k$ copies of our constructed neck from the last subsection.  If we denote $\tilde r \equiv 2\hat s-\hat r_R < r_R$ then we have
\begin{align}
	\partial \hat\cA^a 
	= 
	\{r=2\hat s\}\equiv S^3_\delta\times \partial B_{\tilde r}
	\subseteq 
	S^3_\delta\times S^4_R\, ,
\end{align}
be the boundary of our neck region.  The boundary, and indeed all of $\hat \cA^a$ near the boundary, is isometric to a neighborhood in $S^3_\delta\times S^4_R$.  \\

Recall that $\tilde\cA_1$ has a metric of the form
\begin{align}
	\tilde g_1 \equiv dr^2+\delta^2 g_{S^3}+h(r)^2g_{S^3}\, ,
\end{align}
such that the region $\{10^2k-\hat r\leq r\leq 10^2k+\hat r\}$ is isometric to an annulus in $S^3_\delta\times S^4_{R}$.  In the coordinates from the above description let us choose the point $x^0=(10^2 k,e,e)\in \tilde\cA_1$ so that $r(x^0) = 10^2 k$, {where $e\in S^3$ is the identity}.  Let $x^a=(10^2 k,(2\pi a/k)\cdot e,e)$ be the Hopf rotation of $x^0$ by angle $2\pi a/k$.  Consider the domains $S^3_\delta\times B_{\tilde r}(x^a)$, and let 
\begin{align}
	\varphi^0:\partial \hat\cA^0=S^3_\delta\times \partial B_{\tilde r}\to S^3_\delta\times \partial B_{\tilde r}(x^0)\, ,
\end{align}
be the canonical isometry which fixes the $S^3_\delta$ factor.  Let $\gamma\in \Gamma$ be the action which Hopf rotates the first $S^3$ factor by $2\pi/|\gamma|$ and Hopf rotates the second $S^3$ factor by $2\pi/k$.  Then we define the mappings
\begin{align}
	\varphi^a:\partial \hat\cA^a\to S^3_\delta\times \partial B_{\tilde r}(x^a)\, , \text{ by }\, \varphi^a \equiv \gamma^a \cdot\varphi^0\, .
\end{align}
This allows us to define our space
\begin{align}
	\tilde\cA \equiv \Big(\tilde\cA_1\setminus \bigcup_a S^3_\delta\times B_{\tilde r}(x^a)\Big)\bigcup_{\varphi^a} \hat\cA^a\, .
\end{align}
Note that this gluing extends isometrically to a small neighborhood, and $\tilde\cA$ is a smooth manifold (away from $(k+1)$ singular three spheres) with $\Ric\geq 0$.  Additionally, we have the required property that $\Gamma=\langle\gamma\rangle$ acts isometrically on $\tilde\cA$ such that in both the $\tilde\cA_1$ domain and the glued domains $\hat\cA^a$ the action of $\gamma^k=\hat\gamma$ is purely by Hopf rotation of the $S^3_\delta$ factor. $\qed$\\

\vspace{.5cm}


\vspace{.5cm}
\section{Geometry of the Mapping Class Group of $S^3\times S^3$}\label{s:mappingS3S3}

The primary goal of this Section is to prove Lemma \ref{l:main_mapping_class}, which we restate for the readers convenience below:

\begin{lemma}[Mapping Class Group and Ricci Curvature on $S^3\times S^3$]\label{l:main_mapping_class:2}
	Let $g_0=g_{S^3\times S^3}$ be the standard metric on $S^3\times S^3$.  Then given $\phi\in {\rm Diff}(S^3\times S^3)$\, there exists a smooth family $g_t$ of metrics  with $\Ric_{g_t}>0$ such that $g_0$ is the standard metric and $g_1=\phi^*g_0$.  That is, the orbit $\pi_0{\rm Diff}(S^3\times S^3)\cdot [g_{S^3\times S^3}]$ of the mapping class group lives in a connected component of $\cM^+_0(S^3\times S^3)$, the space of metrics with strictly positive Ricci curvature.
\end{lemma}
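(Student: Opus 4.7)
The starting point is a concatenation principle that will let us break the problem up. If $g_t^{(1)}$ and $g_t^{(2)}$ are smooth families of metrics on $S^3\times S^3$ with $\Ric>0$ throughout, with $g_0^{(i)}=g_0$ and $g_1^{(i)}=\phi_i^*g_0$, then the family
\begin{equation}
g_t:=\begin{cases}g_{2t}^{(1)}&0\le t\le 1/2,\\ \phi_1^*\, g_{2t-1}^{(2)}& 1/2\le t\le 1,\end{cases}
\end{equation}
smoothed at $t=1/2$ (possible since both one-sided limits equal $\phi_1^*g_0$) connects $g_0$ to $(\phi_2\circ\phi_1)^*g_0$ through positive Ricci metrics, using that $\Ric>0$ is preserved under pullback by diffeomorphisms. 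Moreover, if $\phi$ is isotopic to the identity through $\psi_t$, then $t\mapsto\psi_t^*g_0$ is a smooth path of metrics all isometric to $g_0$, hence with $\Ric>0$. These two observations reduce the lemma to verifying the conclusion on a set of representatives of generators of $\pi_0\text{Diff}(S^3\times S^3)$.

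I would then propose a generating set consisting of two types of diffeomorphisms. First, isometries of the standard metric such as the swap $\tau(x,y)=(y,x)$ and the factor-wise inversions $(x,y)\mapsto(x^{\pm1},y^{\pm1})$; for any such isometry $\phi$ the constant family $g_t\equiv g_0$ already satisfies $g_1=\phi^*g_0$. Second, the Lie-group twists $\phi_k(x,y)=(x,x^ky)$ introduced in Section \ref{ss:equivariant_metric:examples} (and their mirror analogues obtained by interchanging the two $S^3$ factors). For each such $\phi_k$, Theorem \ref{t:equivariant_mapping_class:2} already provides a smooth family $g_t$ with $\Ric>0$ from $g_0$ to $\phi_k^*g_0$; here we simply discard the $S^1$-equivariance assertion, which is irrelevant for the present lemma.

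The main obstacle is verifying that isometries of $g_0$ together with the Lie-group twists $\phi_k$ really do generate $\pi_0\text{Diff}(S^3\times S^3)$ up to isotopy. This requires invoking a structural description of this mapping class group, exploiting that $S^3\times S^3$ is a compact Lie group and hence parallelizable. Should additional generators be necessary, for instance Gluck-type twists along embedded $3$-spheres or classes arising from nontrivial elements of $\pi_3(SO(4))$ realized as fiberwise twists, I would handle them by adapting the method of Section \ref{s:equivariant_Ricci}: view $S^3\times S^3$ as the total space of a trivial principal $S^3$-bundle over $S^3$ (projection onto a factor), interpolate through a family of principal-bundle metrics with suitable connections (Yang–Mills in Coulomb gauge, as in Step 4) and appropriately squished fibers (as in Steps 1 and 5), so as to realize the required topological twist while maintaining $\Ric>0$. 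Since the construction no longer needs to respect any isometric $S^1$-action, the rigidity constraints are strictly weaker than in Theorem \ref{t:equivariant_mapping_class:2}, and we expect the required modifications to be routine.
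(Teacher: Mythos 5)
Your strategy coincides with the paper's in outline: reduce via the concatenation/isotopy principle to a chosen generating set of $\pi_0\mathrm{Diff}(S^3\times S^3)$, and handle each generator via the family of positive-Ricci metrics supplied by Theorem \ref{t:equivariant_mapping_class_S3xS2}.  So the two observations you open with, and the use of the Lie-group twists $\phi_k(x,y)=(x,x^ky)$, match the paper's proof.  But you correctly identify, and then leave open, the single load-bearing step: establishing that isometries of $g_0$ together with these Lie-group twists (and their factor-swapped/inverse cousins) exhaust $\pi_0\mathrm{Diff}(S^3\times S^3)$ up to isotopy. Without that input the argument is not a proof, and your proposed fallback does not repair it, because you cannot certify that your generating set plus any given extension is complete without first knowing the structure of the mapping class group.

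The paper supplies exactly this input by citing Kreck and Krylov.  One has a surjection $\kappa:\pi_0\mathrm{Diff}(S^3\times S^3)\to\mathrm{SL}(2,\dZ)$ whose kernel $\cK$ is a two-step nilpotent group fitting in $0\to\dZ_{28}\to\cK\to\dZ^2\to 0$; moreover the four explicit diffeomorphisms $\phi_1(g_1,g_2)=(g_1,g_1g_2)$, $\phi_2(g_1,g_2)=(g_1g_2^{-1},g_2)$, $\phi_3(g_1,g_2)=(g_1,g_2g_1^{-1})$, $\phi_4(g_1,g_2)=(g_2g_1,g_2)$ generate the whole group.  In particular the $\dZ_{28}$ (exotic-sphere) part is already produced by commutators of the conjugation maps $\phi_1^\cK=\phi_4\circ\phi_2$ and $\phi_2^\cK=\phi_1\circ\phi_3$, so no Gluck-type twists or independent $\pi_3(\mathrm{SO}(4))$ classes ever arise as additional generators; that part of your plan addresses a contingency that the structure theorem rules out, while the actual crux---the nontrivial group-theoretic fact---is what needs to be cited or proved and cannot be reduced to ``routine modifications'' of Section \ref{s:equivariant_Ricci}.
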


Let us begin by recalling some basic structure of the mapping class group $\pi_0\text{Diff}(S^3\times S^3)$.  So consider $\text{Diff}(S^3\times S^3)$, the diffeomorphism group of $S^3\times S^3$, and let $\pi_0\text{Diff}(S^3\times S^3)$ denote the connected components of it.  This set inherits a group structure, and the mapping class group of $S^3\times S^3$ is a discrete group.  There is a natural surjective mapping 
\begin{align}\label{e:mappingclass_SL2Z}
	\kappa:\pi_0\text{Diff}(S^3\times S^3)\to \text{SL}(2,\dZ)\, ,
\end{align}
given by looking at the action of $[\phi]\in \pi_0\text{Diff}(S^3\times S^3)$ on the homology ring $\kappa[\phi] = [\phi_*]:H_3(S^3\times S^3)\to H_3(S^3\times S^3)$.  It is now well understood, see \cite{Kreck},\cite{Krylov}, that the kernel 
\begin{align}
	\cK=\ker\kappa\triangleleft\pi_0\text{Diff}(S^3\times S^3)
\end{align}
is a 2-step nilpotent group and obeys the short exact sequence 
\begin{align}
	0\to \dZ_{28}\to \cK\to \dZ\times\dZ\to 0\, .
\end{align}
This kernel and its $\dZ_{28}$ extension are closely related to the exotic differentiable structures on seven manifolds.  The group $\pi_0\text{Diff}(S^3\times S^3)/\cK=\text{SL}(2,\dZ)$ is generated by the two diffeomorphisms
\begin{align}
	&\phi_1(g_1,g_2) = (g_1,g_1g_2)\, ,\notag\\
	&\phi_2(g_1,g_2) = (g_1g_2^{-1},g_2)\, ,
\end{align}
see \cite{Krylov}.  On the other hand the kernel $\cK$, which is the collection of diffeomorphisms whose induced action on the homology is trivial, can be identified as the nilpotent group
\begin{align}
	\cK = \begin{bmatrix}
    1 & a & c \\        
    0 & 1 & b \\
    0 & 0& 1   
\end{bmatrix}\, ,\;\;a,b\in \dZ\, ,\;\;c\in \dZ_{28}\, .
\end{align}
It is generated by two elements, which are given by the diffeomorphisms
\begin{align}
	&\phi^\cK_1(g_1,g_2) = (g_2g_1g_2^{-1},g_2)\, ,\notag\\
	&\phi^\cK_2(g_1,g_2) = (g_1,g_1g_2g_1^{-1})\, .
\end{align}

In particular, if we consider the diffeomorphisms
\begin{align}
	&\phi_3(g_1,g_2) = (g_1,g_2g_1^{-1})\, ,\notag\\
	&\phi_4(g_1,g_2) = (g_2g_1,g_2)\, ,
\end{align}
then we see that $\{\phi_1,\phi_2,\phi_3,\phi_4\}$ generates $\pi_0\text{Diff}(S^3\times S^3)$ .\\

Now it follows from Theorem \ref{t:equivariant_mapping_class:2} and Remark \ref{r:equivariant_mapping_diffeo} that there exist families of metrics $g_{1,t},g_{2,t},g_{3,t},g_{4,t}$ with $\Ric>0$ such that
\begin{align}
	&g_{j,0} =g_{S^3\times S^3}\, ,\notag\\
	&g_{j,1} =\phi_j^*g_{S^3\times S^3}\, .
\end{align}

To prove the Theorem it is now enough to show for each $[\phi]\in \pi_0\text{Diff}(S^3\times S^3)$ that there exists some representative $\phi\in [\phi]$ for which the Theorem holds, as we can clearly vary the metric within a fixed class by the diffeomorphism action itself.  Thus let $\phi = \phi_{j_k}\circ\cdots\circ\phi_{j_1}$ represent any element of the mapping class group.  If we denote $j_0=0$ with $\phi_0=Id$, then let us define the family of metrics
\begin{align}
	g_t \equiv \phi_{j_\ell}^*\circ\cdots \circ \phi_{j_0}^* g_{j_{\ell+1},k(t-\frac{\ell}{k})}\, \, \, \text{ if }\, \, \, t\in \big[\frac{\ell}{k},\frac{\ell+1}{k}\big ]\, .
\end{align}  
Then we have that $\Ric_t>0$ with $g_0=g_{S^3\times S^3}$ and $g_1 = \phi^*g_{S^3\times S^3}$, as claimed.  $\qed$\\


\vspace{1cm}


\begin{thebibliography}{GMS13}



\bibitem[A]{Anderson}
\textsc{Anderson, Michael T.,} 
\textit{On the topology of complete manifolds of nonnegative Ricci curvature.} 
Topology {\bf 29} (1990), no. 1, 41–55.



\bibitem[Be]{Besse}
\textsc{Besse, Arthur L.,} 
\textit{Einstein manifolds.} 
Reprint of the 1987 edition. Classics in Mathematics. Springer-Verlag, Berlin, 2008. xii+516 pp.



\bibitem[Bi]{Bishop}
\textsc{Bishop, Richard L.,}
\textit{A relation between volume, mean curvaure, and diameter.}
Notices Amer. Math. Soc. {\bf 10} (1963) 364.


\bibitem[CV]{Cohn-Vossen}
\textsc{Cohn-Vossen, Stefan,} 
\textit{Kürzeste Wege und Totalkrümmung auf Flächen.} 
Compositio Math. {\bf 2} (1935), 69–133.


\bibitem[CN1]{ColdingNaberholder}
\textsc{Colding, Tobias Holck; Naber, Aaron,} 
\textit{Sharp Hölder continuity of tangent cones for spaces with a lower Ricci curvature bound and applications.} 
Ann. of Math. (2) {\bf 176} (2012), no. 2, 1173–1229.



\bibitem[CN2]{ColdingNabercones}
\textsc{Colding, Tobias Holck; Naber, Aaron,} 
\textit{Characterization of tangent cones of noncollapsed limits with lower Ricci bounds and applications.} 
Geom. Funct. Anal. {\bf 23} (2013), no. 1, 134–148. 


\bibitem[FG]{FukayaYamaguchi}
\textsc{Fukaya, Kenji; Yamaguchi, Takao,} 
\textit{The fundamental groups of almost non-negatively curved manifolds.} 
Ann. of Math. (2) {\bf 136} (1992), no. 2, 253–333.



\bibitem[GPT]{GilkeyParkTuschmann}
\textsc{Gilkey, Peter B.; Park, JeongHyeong; Tuschmann, Wilderich,} 
\textit{Invariant metrics of positive Ricci curvature on principal bundles.} 
Math. Z. {\bf 227} (1998), no. 3, 455–463.


\bibitem[Gr1]{Gromovalmostflat}
\textsc{Gromov, Mikhael,} 
\textit{Almost flat manifolds.} 
J. Differential Geometry {\bf 13} (1978), no. 2, 231–241.

\bibitem[Gr2]{Gromovgroups}
\textsc{Gromov, Mikhael,} 
\textit{Groups of polynomial growth and expanding maps.}
Inst. Hautes Études Sci. Publ. Math. No. {\bf 53} (1981), 53–73.


\bibitem[KPT]{KapovitchPetruninTuschmann}
\textsc{Kapovitch, Vitali; Petrunin, Anton; Tuschmann, Wilderich,} 
\textit{Nilpotency, almost nonnegative curvature, and the gradient flow on Alexandrov spaces.} 
Ann. of Math. (2) {\bf 171} (2010), no. 1, 343–373.


\bibitem[KW]{KapovitchWilking}
\textsc{Kapovitch, Vitali; Wilking, Bukhard,}
\textit{Structure of fundamental groups of manifolds with Ricci curvature bounded below,}
preprint arXiv:1105.5955 (2011).



\bibitem[Kre]{Kreck}
\textsc{Kreck, Matthias,} 
\textit{Isotopy classes of diffeomorphisms of $(k-1)-$connected almost-parallelizable $2k$-manifolds.} 
Algebraic topology, Aarhus 1978 (Proc. Sympos., Univ. Aarhus, Aarhus, 1978), pp. 643–663, Lecture Notes in Math., {\bf 763}, Springer, Berlin, 1979. 




\bibitem[Kry]{Krylov}
\textsc{Krylov, Nikolai A.,} 
\textit{On the Jacobi group and the mapping class group of $S^3\times S^3$.} 
Trans. Amer. Math. Soc. {\bf 355} (2003), no. 1, 99–117.



\bibitem[Li]{Li}
\textsc{Li, Peter,} 
\textit{Large time behavior of the heat equation on complete manifolds with nonnegative Ricci curvature.} 
Ann. of Math. (2) {\bf 124} (1986), no. 1, 1–21.

\bibitem[Liu]{Liu}
\textsc{Liu, Gang,} 
\textit{3-manifolds with nonnegative Ricci curvature.} 
Invent. Math. {\bf 193} (2013), no. 2, 367–375.

\bibitem[Mi]{Milnor}
\textsc{Milnor, John,} 
\textit{A note on curvature and fundamental group.} 
J. Differential Geometry {\bf 2} (1968), 1–7. 


\bibitem[Mo]{Moser}
\textsc{Moser, Jürgen,} 
\textit{On the volume elements on a manifold.} 
Trans. Amer. Math. Soc. {\bf 120} (1965), 286–294.




\bibitem[Na]{Nash}
\textsc{Nash, John C.,} 
\textit{Positive Ricci curvature on fibre bundles.} 
J. Differential Geometry {\bf 14} (1979), no. 2, 241–254.


\bibitem[O]{Oneill}
\textsc{O'Neill, Barrett,} 
\textit{The fundamental equations of a submersion.} 
Michigan Math. J. {\bf 13} (1966), 459–469.



\bibitem[Pa1]{Pan3d}
\textsc{Pan, Jiayin,} 
\textit{A proof of Milnor conjecture in dimension 3.} 
J. Reine Angew. Math. {\bf 758} (2020), 253–260.


\bibitem[Pa2]{Pancone}
\textsc{Pan, Jiayin,} 
\textit{Nonnegative Ricci curvature, stability at infinity and finite generation of fundamental groups.} 
Geom. Topol. {\bf 23} (2019), no. 6, 3203–3231.


\bibitem[Pa3]{Pan3}
\textsc{Pan, Jiayin,} 
\textit{On the escape rate of geodesic loops in an open manifold with nonnegative Ricci curvature.} 
Geom. Topol. {\bf 25} (2021), no. 2, 1059–1085.



\bibitem[PW]{PanWei}
\textsc{Pan, Jiayin; Wei, Guofang,} 
\textit{Semi-local simple connectedness of non-collapsing Ricci limit spaces.} 
J. Eur. Math. Soc. (JEMS) {\bf 24} (2022), no. 12, 4027–4062.





\bibitem[Pe]{Petersen}
\textsc{Petersen, Peter,} 
\textit{Riemannian geometry.} 
Third edition. Graduate Texts in Mathematics, 171. Springer, Cham, 2016. xviii+499 pp.


\bibitem[PT]{PetruninTuschmann}
\textsc{Petrunin, A.; Tuschmann, W.,} 
\textit{Diffeomorphism finiteness, positive pinching, and second homotopy.} 
Geom. Funct. Anal. {\bf 9} (1999), no. 4, 736–774.

\bibitem[Po]{Poor}
\textsc{Poor, W. A.,} 
\textit{Some exotic spheres with positive Ricci curvature.} 
Math. Ann. {\bf 216} (1975), no. 3, 245–252.


\bibitem[SY]{SchoenYau}
\textsc{Schoen, Richard; Yau, Shing Tung,} 
\textit{Complete three-dimensional manifolds with positive Ricci curvature and scalar curvature.} 
Seminar on Differential Geometry, pp. 209–228, Ann. of Math. Stud., {\bf 102}, Princeton Univ. Press, Princeton, N.J., 1982. 


\bibitem[So1]{Sormanilinear}
\textsc{Sormani, Christina,} 
\textit{Nonnegative Ricci curvature, small linear diameter growth and finite generation of fundamental groups.} 
J. Differential Geom. {\bf 54} (2000), no. 3, 547–559.


\bibitem[So2]{Sormaniloop}
\textsc{Sormani, Christina,} 
\textit{On loops representing elements of the fundamental group of a complete manifold with nonnegative Ricci curvature.} 
Indiana Univ. Math. J. {\bf 50} (2001), no. 4, 1867–1883. 





\bibitem[SW1]{SormaniWei1}
\textsc{Sormani, Christina; Wei, Guofang,} 
\textit{Hausdorff convergence and universal covers.} 
Trans. Amer. Math. Soc. {\bf 353} (2001), no. 9, 3585–3602.



\bibitem[SW2]{SormaniWei2}
\textsc{Sormani, Christina; Wei, Guofang,} 
\textit{Universal covers for Hausdorff limits of noncompact spaces.} 
Trans. Amer. Math. Soc. {\bf 356} (2004), no. 3, 1233–1270. 


\bibitem[St]{Steenrod}
\textsc{Steenrod, Norman,} 
\textit{The Topology of Fibre Bundles.} 
Princeton Mathematical Series, vol. 14. Princeton University Press, Princeton, N. J., 1951. viii+224 pp. 


\bibitem[Vi]{Vilms}
\textsc{Vilms, Jaak,} 
\textit{Totally geodesic maps.} 
J. Differential Geometry {\bf 4} (1970), 73–79. 



\bibitem[Wa]{Wang}
\textsc{Wang, Jikang,}
\textit{Ricci limit spaces are Semi-locally simply connected.}
Preprint arXiv:2104.02460 (2021). 



\bibitem[WZ]{WangZiller}
\textsc{Wang, McKenzie Y.; Ziller, Wolfgang,} 
\textit{Einstein metrics on principal torus bundles.} 
J. Differential Geom. {\bf 31} (1990), no. 1, 215–248.


\bibitem[We]{Wei}
\textsc{Wei, Guofang,} 
\textit{Examples of complete manifolds of positive Ricci curvature with nilpotent isometry group} 
Bull. Amer. Math. Soc. {\bf 19} (1988) 311–313.


\bibitem[Wi]{Wilking}
\textsc{Wilking, Burkhard,} 
\textit{On fundamental groups of manifolds of nonnegative curvature.} 
Differential Geom. Appl. {\bf 13} (2000), no. 2, 129–165.




\bibitem[Wu]{Wu}
\textsc{Wu, Bing Ye,} 
\textit{On the fundamental group of Riemannian manifolds with nonnegative Ricci curvature.} 
Geom. Dedicata {\bf 162} (2013), 337–344.





\end{thebibliography}
\end{document}